\theoremstyle{definition}%non-italic in Theorem%
\newtheorem{theorem}{Theorem}
\newtheorem{definition}[theorem]{Definition}
\newtheorem{corollary}{Corollary}[theorem]
\newtheorem{prop}[theorem]{Proposition}
\newtheorem{lemma}[theorem]{Lemma}
\newtheorem{question}[theorem]{Open Question}
\newtheorem{remark}[theorem]{Remark}
\newtheorem{claim}{Claim}[theorem]
\newenvironment{claimproof}[1]{\par\noindent\textit{Proof of the Claim.}\space#1}{\hfill $\blacksquare$}
\newcommand\A{\mathscr{A}}
\newcommand\I{\mathscr{I}}
\newcommand\ZFUR{\textup{ZFU}_\text{R}}
\newcommand\ZFCUR{\textup{ZFCU}_\text{R}}
\renewcommand{\P}{{\mathbb P}}
\newcommand{\barx}{\bar{x}}
\newcommand{\dotx}{\dot{x}}
\newcommand{\doty}{\dot{y}}
\newcommand{\dotz}{\dot{z}}
\newcommand{\dotu}{\dot{u}}
\newcommand{\dotv}{\dot{v}}
\newcommand{\dotw}{\dot{w}}
\newcommand{\forces}{\Vdash}
\newcommand{\nforces}{\nVdash}
\def\<#1>{\left\langle#1\right\rangle}
\def\[#1]{\left\llbracket#1\right\rrbracket}
\renewcommand{\restriction}{\mathord{\upharpoonright}}
\newcommand{\Aeq}{\overset{\mathscr{A}}{=}}
\newcommand{\xset}{\dot{x}^{\textup{Set}}}
\newcommand{\yset}{\dot{y}^{\textup{Set}}}
\title{\textbf{Axiomatization and Forcing in Set Theory with Urelements}}
\author{Bokai Yao}\address[Bokai Yao]{
Peking University}
 \email{bokaiyao1@gmail.com}
\urladdr{https://bokaiyao.com}
\thanks{This paper is part of the author's doctoral dissertation completed at the University of Notre Dame. The author would like to thank Joel David Hamkins for providing insightful feedback on the earlier drafts of this paper. The author is also grateful to the anonymous referee for their extremely detailed and helpful comments.}
\begin{document}
\maketitle

\begin{abstract}
In the first part of this paper, we consider several natural axioms in urelement set theory, including the Collection Principle, the Reflection Principle, the Dependent Choice scheme and its generalizations, as well as other axioms specifically concerning urelements. We prove that these axioms form a hierarchy over $\ZFCUR$ (ZFC with urelements formulated with Replacement) in terms of direct implication. The second part of the paper studies forcing over countable transitive models of $\ZFUR$. We propose a new definition of $\P$-names to address an issue with the existing approach. We then prove the fundamental theorem of forcing with urelements regarding axiom preservation. Moreover, we show that forcing can destroy and recover certain axioms within the previously established hierarchy. Finally, we demonstrate how ground model definability may fail when the ground model contains a proper class of urelements.
\end{abstract}

\section{Introduction}
This paper explores two related topics in set theory with urelements: axiomatization and forcing. One interesting feature of urelement set theory is that once a proper class of urelements is allowed, many ZF-theorems, such as the Collection Principle and the Reflection Principle, are no longer provable. In Section \ref{Section2}, we show that these principles, together with other axioms concerning urelements, form a hierarchy in terms of direct implication over the theory $\ZFCUR$ (Theorem \ref{maintheorem1}). Consequently, the Collection Principle and the Reflection Principle can be characterized by the arrangement of urelements (Corollary \ref{urcharacterization}). We apply these results to show that the Collection Principle is equivalent to the \L o\'s theorem for internal ultrapowers (Theorem \ref{thm:collection<->losthm}).

 We turn to forcing with urelements in Section \ref{section3}. This topic has been studied in Blass and {\v{S}}{\v{c}}edrov ( \cite{blass1989freyd}) and Hall (\cite{hall2002characterization} and \cite{Hall2007-ERIPMA}), yet two main issues remain unexplored in the literature. The first issue concerns the forcing machinery in the presence of urelements. Previous studies have treated each urelement as a distinct copy of the empty set, making it its own $\P$-name. This approach, however, causes the forcing relations to lack a desired property called \textit{fullness}. A new forcing machinery is thus proposed (Definition \ref{newpnames}). We show that in any countable transitive model $M$ of $\ZFCUR$, every new forcing relation in $M$ is full if and only if $M$ satisfies the Collection Principle (Theorem \ref{collection<->fullness}). This, along with Theorem \ref{thm:collection<->losthm}, suggests that a robust axiomatization of ZFC with urelements should include the Collection Principle.

The second issue is the interaction between forcing and the hierarchy of axioms isolated in Section \ref{Section2}. Existing studies typically assume that the ground model contains only a set of urelements, which makes the preservation of many axioms trivial. We show that forcing with a class of urelements can \textit{preserve}, \textit{destroy}, and \textit{recover} various axioms within the hierarchy. For instance, forcing preserves Replacement (Theorem \ref{forcingpreservesreplacement}) but not the DC$_{\omega_1}$-scheme (Theorem \ref{thm:ForcingDesotryDCK}); the Reflection Principle can be recovered by forcing from models of $\ZFCUR$ + DC$_\omega$-scheme (Theorem \ref{recovercollection}). Finally, we show how ground model definability may fail when there is a proper class of urelements.

The rest of this section introduces our notations and reviews some basic facts about urelement set theory. Urelements, which are sometimes also called ``atoms'', are members of sets that are not themselves sets. The language of urelement set theory, in addition to $\in$, contains a unary predicate $\A$ for urelements. $Set(x)$ abbreviates $\neg\A(x)$. The standard axioms (schemes) of ZFC, modified to allow urelements, are as follows.

\begin{itemize}
\item[] (Axiom $\A$) $\forall x (\A(x) \rightarrow \neg \exists y (y \in x))$.
\item[] (Extensionality) $\forall x, y (Set(x) \land Set(y) \land \forall z (z \in y \leftrightarrow z \in x) \rightarrow x = y)$
\item[] (Foundation) $\forall x (\exists y (y \in x) \rightarrow \exists z\in x \ (z \cap x = \emptyset))$
\item[] (Pairing) $\forall x, y \exists z \forall v (v \in z \leftrightarrow v = x \lor v = y )$
\item[] (Union) $\forall x \exists y \forall z (z \in y \leftrightarrow \exists w \in x \ ( z \in w))$.
\item[] (Powerset) $\forall x \exists y \forall z (z \in y \leftrightarrow Set(z) \land z \subseteq x)$
\item [] (Separation) $\forall x, u \exists y \forall z (z \in y \leftrightarrow z \in x \land \varphi(z, u))$
\item[] (Infinity) $\exists s (\exists y \in s \ (Set(y) \land \forall z (z \notin y)) \land \forall x \in s \ (x \cup \{x\} \in s))$
\item[] (Replacement) $\forall w, u (\forall x \in w \ \exists ! y \varphi(x, y, u)   \rightarrow \exists v \forall x \in w \ \exists y \in v \ \varphi(x, y, u))$
\item[] (AC) Every set is well-orderable.
\end{itemize}

\begin{definition}\label{def:ZUandZFU}
\ \newline ZU = Axiom $\A$ + Extensionality + Foundation + Pairing + Union + Powerset + Infinity + Separation.\\
$\ZFUR = $ ZU + Replacement. \\
$\ZFCUR = $ $\ZFUR$ + AC.
\end{definition}
\noindent In $\ZFUR$, every object $x$ has a \textit{kernel}, denoted by $ker(x)$, which is the set of the urelements in the transitive closure of $\{x\}$. The kernel of a urelement is then its singleton, which is somewhat nonstandard but will be useful for our purpose. Note that $x \subseteq y$ is simply $\forall z \in x (z \in y)$, so the power set of a set $x$ is $P(x) =\{y \mid Set(y) \land y \subseteq x\}$. A set is pure if its kernel is empty. $V$ denotes the class of all pure sets. $Ord$ is the class of all ordinals, which are transitive \textit{pure} sets well-ordered by the membership relation. For any given \textit{set} of urelements $A$, the $V_{\alpha}(A)$-hierarchy is defined as usual, i.e.,
\begin{itemize}
    \item [] $V_0(A) = A$;
    \item [] $V_{\alpha+1}(A) = P(V_{\alpha}(A)) \cup A$;
    \item [] $V_{\gamma}(A) = \bigcup_{\alpha < \gamma} V_\alpha(A)$, where $\gamma$ is a limit;
    \item [] $V(A) = \bigcup_{\alpha \in Ord} V_\alpha(A)$.
\end{itemize}
\noindent Note that at each successor stage we must include $A$ because $P(V_{\alpha}(A))$ only contains sets.

We will also let $\A$ stand for the class of all urelements. $A \subseteq \A$ thus means ``$A$ is a set of urelements'', i.e., $Set(A) \land \forall x \in A \ (\A(x))$. For every $x$ and $A \subseteq \A$, $x \in V(A)$ if and only if $ker(x) \subseteq A$. $U$ denotes the class of all objects, i.e., $U = \bigcup_{A\subseteq\A} V(A)$. Every permutation $\pi$ of a set of urelements can be extended to a definable permutation of $\A$ by letting $\pi$ be identity elsewhere; by well-founded recursion, $\pi$ can be further extended to a permutation of $U$ by letting $\pi x$ be $\{\pi y : y \in x \}$ for every set $x$. $\pi$ preserves $\in$  and is thus an automorphism of $U$. For every $x$ and automorphism $\pi$, $\pi$ point-wise fixes $x$ whenever $\pi$ point-wise fixes $ker(x)$. 

Let ``$\A$ is a set'' abbreviate the axiom 
\begin{itemize}
\item [] $\exists x (Set(x) \land \forall y (y \in x \leftrightarrow \A(y))).$
\end{itemize}
It is consistent with $\ZFCUR$ that $\A$ is a proper class (i.e., $\neg (\A \text{ is a set})$ ). However, in urelement set theory proper classes can be rather ``small''. For example, $\ZFCUR$ has models in which $\A$ is a proper class but every set of urelements is finite; consequently, $\ZFCUR$ cannot prove the Collection Principle (this will be discussed in length in Section \ref{section2.5}).
\begin{itemize}
   \item[] (Collection) $\forall w, u (\forall x \in w \exists y \varphi(x, y, u)   \rightarrow \exists v \forall x \in w \exists y \in v \varphi(x, y, u))$.
\end{itemize}
Collection is provable in ZF without urelements and sometimes viewed as part of the axiomatization of ZF, which is why the subscript $R$ is added to $\ZFCUR$. However, Collection cannot exclude models with small proper classes, e.g., $\ZFCUR$ + Collection has models where $\A$ is a proper class but every set of urelements is countable (Theorem \ref{zfcurindependece} (1)).  We end this section with a useful fact that $\ZFUR$ proves a restricted version of Collection.
\begin{prop}[$\ZFUR$]\label{weakcollection}
\
\begin{itemize}
   \item[] $\forall w, u, A \subseteq \A (\forall x \in w \exists y \in V(A) \varphi(x, y, u)   \rightarrow \exists v \forall x \in w \exists y \in v \varphi(x, y, u))$.
\end{itemize}
Hence, $\A$ is a set $\rightarrow$ Collection.
\end{prop}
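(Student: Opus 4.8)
The plan is to reduce the restricted Collection to Replacement by using the $V_\alpha(A)$-hierarchy as a measuring stick. Fix $w$, $u$, and $A \subseteq \A$, and assume $\forall x \in w\, \exists y \in V(A)\, \varphi(x,y,u)$. Since the hierarchy $\langle V_\alpha(A) : \alpha \in Ord\rangle$ is definable from the set $A$ (by the recursion theorem, which is available in $\ZFUR$) and is $\subseteq$-increasing — a fact I would verify by a routine induction on $\alpha$, the only point to note being that $A \subseteq V_{\alpha+1}(A)$ at every successor step — every element of $V(A)$ lies in some $V_\alpha(A)$. Hence for each $x \in w$ the ordinal
\[ f(x) = \text{the least } \alpha \text{ such that } \exists y \in V_\alpha(A)\, \varphi(x,y,u) \]
is well-defined. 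This $f$ is a definable class function with domain $w$, so by Replacement its range is a set of ordinals; let $\beta$ be its supremum, an ordinal obtained via Union. Then $v = V_\beta(A)$ — a set, again by the recursion theorem — is as required: for each $x \in w$ we have $f(x) \le \beta$, so there is $y \in V_{f(x)}(A) \subseteq V_\beta(A)$ with $\varphi(x,y,u)$.

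For the final clause, suppose $\A$ is a set. Every object $x$ satisfies $ker(x) \subseteq \A$, so by the fact recalled in the introduction (namely $x \in V(A) \iff ker(x) \subseteq A$) we have $x \in V(\A)$. Thus the hypothesis $\forall x \in w\, \exists y\, \varphi(x,y,u)$ of an arbitrary instance of Collection is literally the hypothesis of the restricted principle just proved, taken with $A = \A$, and the conclusion of the restricted principle is exactly the conclusion of that instance of Collection.

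I do not expect any genuine obstacle here; the one thing to be careful about is that every ingredient — the definability and $\subseteq$-monotonicity of the $V_\alpha(A)$-hierarchy, the existence of $V_\beta(A)$ as a set, and the well-foundedness used to pick the least $\alpha$ — is really available in $\ZFUR$ and does not smuggle in Collection or AC. In particular, the step asserting that $f$ has set-sized range must be justified as an instance of Replacement, which is legitimate precisely because $f$ is single-valued by the minimality built into its definition.
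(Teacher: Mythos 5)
Your proof is correct and follows essentially the same route as the paper's: assign to each $x \in w$ the least $\alpha$ with a witness in $V_\alpha(A)$, apply Replacement to collect these ordinals, and take $V_\beta(A)$ at the supremum as the collecting set, with the ``$\A$ is a set'' clause handled by taking $A = \A$. You simply spell out the routine verifications (definability and monotonicity of the $V_\alpha(A)$-hierarchy, legitimacy of the Replacement instance) that the paper leaves implicit.
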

\begin{proof}
For every $x \in w$, let $\alpha_x$ be the least $\alpha$ such that there is some $y \in V_\alpha (A)$ with $\varphi(x, y, u)$ and let $\alpha = \bigcup_{x \in w} \alpha_x$. $V_\alpha (A)$ is a desired collection set $v$.
\end{proof}

\section{A hierarchy of axioms}\label{Section2}
\subsection{Reflection}
In ZF set theory, the reflection principle is normally formulated as the L\'evy-Montague reflection. Namely,
\begin{itemize}
    \item [] $\forall \alpha \exists \beta > \alpha \forall v_1, ..., v_n \in V_{\beta} (\varphi(v_1, ..., v_n) \leftrightarrow \varphi^{V_\beta}(v_1, ..., v_n))$.
\end{itemize}
\noindent This form of reflection, however, cannot hold when there is a proper class of urelements since no $V_\alpha(A)$ can reflect the statement that $\A$ is a proper class. Thus, with urelements the reflection principle should be formulated in a more general way as follows.
\begin{itemize}
    \item [] (RP) $\forall x \exists t (x \subseteq t \land t \text{ is transitive} \land \forall v_1, ..., v_n\in t (\varphi(v_1, ..., v_n) \leftrightarrow \varphi^t(v_1, ..., v_n))$. \end{itemize}
There is also a seemingly weaker version of RP, which asserts that any true statement is already true in some transitive set containing the parameters.
\begin{itemize}
    \item [] (RP$^-$) $\forall v_1, ... v_n [\varphi (v_1, ..., v_n) \rightarrow \exists t (\{v_1, ... v_n\} \subseteq t \ \land \ t \text{ is transitive } \land \ \varphi^t(v_1, ..., v_n))].$
\end{itemize} 
L\'evy and Vaught \cite{levy1961principles} showed that over Zermelo set theory, RP$^-$ does not imply RP.

\subsection{Dependent choice scheme} 

The Dependent Choice scheme (studied in \cite{Gitman2016-GITWIT} and \cite{Friedman2019-FRIAMO-8}), as a class version of the Axiom of Dependent Choice (DC), asserts that if $\varphi$ defines a class relation without terminal nodes, then there is an infinite sequence threading this relation.
\begin{itemize}
    \item [] (DC-scheme) If for every $x$ there is some $y$ such that $\varphi(x, y, u)$, then for every $p$ there is an infinite sequence $s$ such that $s(0) = p$ and $\varphi(s(n), s(n+1), u)$ for every $n<\omega$.
\end{itemize}

\noindent DC can be generalized to DC$_\kappa$ for any infinite well-ordered cardinal $\kappa$ as follows (first introduced by L\'evy in \cite{bwmeta1.element.bwnjournal-article-fmv54i1p13bwm}). 
\begin{itemize}
    \item [] (DC$_\kappa$) For every $x$ and $r\subseteq x^{<\kappa} \times x$, if for every $s \in x^{<\kappa}$, there is some $w \in x$ such that $\<s, w> \in r$, then there is an $f: \kappa \rightarrow x$ such that $\<f\restriction\alpha, f(\alpha)> \in r$ for all $\alpha < \kappa$.
\end{itemize}
\noindent Similarly, we can formulate the class version of DC$_\kappa$ for any $\kappa$.
\begin{itemize}
        \item[]  (DC$_\kappa$-scheme) If for every $x$ there is some $y$ such that $\varphi(x, y, u)$, then there is some function $f$ on $\kappa$ such that $\varphi(f\restriction \alpha, f(\alpha), u)$ for every $\alpha <\kappa$.
\end{itemize}
We say that DC$_{<Ord}$ holds if the DC$_\kappa$-scheme holds for every $\kappa$. One can verify that the DC$_\omega$-scheme is indeed a reformulation of the DC-scheme; moreover, the DC$_\kappa$-scheme is equivalent to the following (\cite[Proposition 13]{yao2023set}).
\begin{itemize}
\item [] For every definable class $X$, if for every $s \in X^{<\kappa}$ there is some $y \in X$ with $\varphi(x, y, u)$, then there is some function $f : \kappa \rightarrow X$ such that $\varphi(f\restriction \alpha, f(\alpha), u)$ for every $\alpha < \kappa$.
\end{itemize}
It is observed in \cite[page 397]{Gitman2016-GITWIT} that over ZF without Powerset, Collection and the DC$_\omega$-scheme jointly imply RP. The same argument goes through in $\ZFUR$ as well.
\begin{theorem}\label{collection+dc->rp}
$\ZFUR \vdash $ Collection $\land$ DC$_\omega$-scheme $\rightarrow$ RP. \qed
\end{theorem}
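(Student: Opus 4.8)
The plan is to run the classical Montague--L\'evy reflection argument in the form attributed above to \cite{Gitman2016-GITWIT}, but building an increasing $\omega$-chain of \emph{transitive sets} rather than a chain of $V_\beta(A)$'s, since with a proper class of urelements no single $V_\beta(A)$ need contain the parameters involved. Fix the formula $\varphi$ and a finite list $\varphi_1,\dots,\varphi_m$ of its subformulas closed under subformulas, with free variables drawn from a fixed finite list. Call a transitive set $t$ \emph{$\varphi$-closed} if for every $i\le m$ with $\varphi_i$ of the form $\exists y\,\varphi_j$ and every assignment $\bar{a}$ of the free variables of $\varphi_i$ by elements of $t$, if $\exists y\,\varphi_j(y,\bar{a})$ holds then some such witness $y$ already lies in $t$. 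The usual Tarski--Vaught induction on the complexity of the $\varphi_i$ shows that whenever $t$ is transitive and $\varphi$-closed and $x\subseteq t$, then $t$ witnesses the relevant instance of RP: atomic formulas --- including $\A(v)$, which is a primitive predicate and hence absolute --- are absolute by transitivity, the Boolean steps are trivial, and the existential step is exactly where $\varphi$-closedness is used. So it suffices to produce, for a given $x$, a transitive $\varphi$-closed $t$ with $x\subseteq t$.

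To build such a $t$ I would construct an increasing sequence $\langle t_n : n<\omega\rangle$ of transitive sets with $x\subseteq t_0$ and with $t_{n+1}$ containing, for every existential subformula and every parameter assignment from $t_n$, a witness whenever one exists; then $t=\bigcup_{n<\omega}t_n$ is transitive, satisfies $x\subseteq t$, and is $\varphi$-closed because every parameter assignment from $t$ already lies in some $t_n$. The single step $t_n\mapsto t_{n+1}$ is handled by Collection: the set of pairs (index of an existential subformula, parameter assignment from $t_n$) for which a witness exists is a set by Separation, so Collection yields a set $v_n$ meeting all the corresponding witness classes, and one may take $t_{n+1}=\mathrm{TC}(t_n\cup v_n)$, using that transitive closures exist in $\ZFUR$. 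Because $\ZFUR$ includes no choice, there is in general no canonical such $t_{n+1}$, so to actually obtain the sequence I would invoke the DC$_\omega$-scheme: let $\psi(s,y,u)$ assert that either $s$ is not a finite sequence correctly continuing the construction just described (the ``junk'' case, where we put $y=\emptyset$) or $s=\langle t_0,\dots,t_{n-1}\rangle$ is such a sequence and $y$ is a transitive set that, if $n>0$, extends $t_{n-1}$ and, if $n=0$, satisfies $x\subseteq y$, and in either case contains the needed witnesses. The previous sentence shows $\psi$ is total on $U$, and the DC$_\omega$-scheme (which, as noted in the excerpt, reformulates the DC-scheme) then delivers $f$ on $\omega$ with $\psi(f\restriction n,f(n),u)$ for all $n<\omega$; a straightforward induction shows $f\restriction n$ is never junk, so $\langle f(n):n<\omega\rangle$ is a chain of the desired kind.

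The one point that needs care is matching the construction to the precise shape of the DC$_\kappa$-scheme: the relation must hold between an initial segment $f\restriction n$ and the next value $f(n)$ and must be total on the whole universe (hence the junk clause), and one must check that the recursion ``locks in'' the intended sequence instead of drifting into the junk case. Beyond that, the argument is the routine Montague--L\'evy bookkeeping, and I expect the only real obstacle to be bureaucratic: keeping the finite list of subformulas and their free variables straight, verifying that each relevant collection of witnesses is genuinely a \emph{set} so that Collection applies, and confirming that nothing beyond Collection, the DC$_\omega$-scheme, and the finite fragment of $\ZFUR$ actually used (Separation, Pairing, Union, and existence of transitive closures) is needed --- in particular Powerset is not used, consistent with the cited fact that the implication already holds over ZF without Powerset.
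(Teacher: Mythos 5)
Your proposal is correct and is essentially the argument the paper intends: Theorem \ref{collection+dc->rp} is stated with a reference to the observation in \cite{Gitman2016-GITWIT} that Collection plus the DC$_\omega$-scheme yield RP over ZF without Powerset, and your reconstruction (Tarski--Vaught closure of an $\omega$-chain of transitive sets, with Collection supplying witness sets at each step and the DC$_\omega$-scheme threading the non-canonical recursion, noting that $\A$ is absolute for transitive sets) is exactly that argument carried out in $\ZFUR$. No gaps.
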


\subsection{Urelement axioms and homogeneity}
\begin{definition}
Let $A$ be a set of urelements.
\begin{enumerate}
   \item A set $x$ is \textit{realized by} $A$ if it is equinumerous with $A$ (abbreviated by $x \sim \A$); a set is \textit{realized} if it is realized by some set of urelements.  
    \item A set of urelements $B$ \textit{duplicates} $A$, if $B$ and $A$ are disjoint and $A \sim B$.
    \item A set of urelements $B$ is a \textit{tail} of $A$, if $B$ is disjoint from $A$ and for every $C \subseteq \A$ disjoint from $A$ there is an injection from $C$ to $B$.
\end{enumerate}
\end{definition}
\noindent Since $\A - A$ is a tail of $A$ whenever $\A$ is a set, the notion of tail can be seen as a generalized notion of \textit{complement} in terms of equinumerosity. When AC holds, the \textit{tail cardinal} of $A$ is the cardinality of a tail of $A$ if it exists. We then have the following axioms concerning urelements.
\begin{enumerate}
    \item [] (Plenitude) Every ordinal is realized.
    \item [] (Closure) The supremum of a set of realized ordinals is realized.
    \item [] (Duplication) Every set of urelements has a duplicate.
    \item [] (Tail) Every set of urelements has a tail.
\end{enumerate}

A key notion that will be frequently used is \textit{homogeneity over} $A$, which is originally introduced in \cite{hamkins2022reflection}.
\begin{definition}
\textit{Homogeneity holds over} a set of urelements $A$, if whenever $B \cup C \subseteq \A$ is disjoint from $A$ and $B \sim C$, there is an automorphism $\pi$ such that $\pi B = C$ and $\pi$ point-wise fixes $A$.
\end{definition}
\noindent Intuitively, when homogeneity holds over $A$, all equinumerous sets of urelements outside $A$ are indistinguishable from the perspective of $A$.

\begin{lemma}[$\ZFCUR$]\label{homogeneitylemma}
\
\begin{enumerate}

\item Homogeneity holds over some $A \subseteq \A$.

\item If $A \subseteq A' \subseteq \A$ and homogeneity holds over $A$, then homogeneity holds over $A'$.

\item Every $A \subseteq \A$ is a subset of some $A'$ over which homogeneity holds.
\end{enumerate}
\end{lemma}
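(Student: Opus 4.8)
The plan is to reduce all three parts to a single combinatorial characterization of homogeneity, proved by an explicit ``routing'' of urelements. First I would establish: \emph{homogeneity holds over $A$ if and only if for every infinite set of urelements $S$ disjoint from $A$ there is a set of urelements $S'$ with $S' \sim S$ and $S' \cap (A \cup S) = \emptyset$.} For the nontrivial (``if'') direction, given $B \sim C$ disjoint from $A$: if $B$ is finite, the automorphism induced by the permutation of $B \cup C$ that fixes $B \cap C$ pointwise and interchanges $B \setminus C$ with $C \setminus B$ (which have equal finite size) does the job; if $B$ is infinite, apply the hypothesis to $S := B \cup C$ to get a fresh $S'$, choose $D \subseteq S'$ with $|D| = |B|$, and compose the automorphism interchanging $B$ with $D$ with the one interchanging $D$ with $C$, noting that both are supported off $A$. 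For the converse, split an infinite $S$ disjoint from $A$ as $S = C \sqcup S''$ with $|C| = |S''| = |S|$ (using AC), apply homogeneity to $B := S$ and this $C$ to get $\pi$ fixing $A$ with $\pi S = C$, and take $S' := \pi^{-1}[S'']$, which one checks is disjoint from $A \cup S$ and equinumerous with $S$.

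Granting this characterization, part (2) is immediate: if $A \subseteq A'$ and $S$ is infinite and disjoint from $A'$, apply it for $A$ to $T := S \cup (A' \setminus A)$, obtaining $T' \sim T$ with $T' \cap (A \cup T) = T' \cap (A' \cup S) = \emptyset$, and shrink $T'$ to a subset of size $|S|$. Part (1) is the special case $A = \emptyset$ of part (3), so everything comes down to (3). If $\A$ is a set, $A' = \A$ works vacuously, so I may assume $\A$ is a proper class. Write $\lambda^*_A := \sup\{|Z| : Z \subseteq \A,\ Z \cap A = \emptyset\}$. If the realized cardinals have no largest element, then homogeneity already holds over $A$: given infinite $S$ disjoint from $A$, choose a set of urelements $T$ with $|T| > |A \cup S|$ (possible since $|A \cup S|$ is realized but not maximal); then $T \setminus (A \cup S)$ has size $|T| > |S|$, so it contains the required $S'$.

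The remaining case --- $\A$ a proper class whose realized cardinals have a maximum $\lambda^*$ (necessarily infinite, as otherwise $\A$ would be a set) --- is where the real work lies, and I would handle it by induction on the \emph{ordinal} $\lambda^*_A$. Two auxiliary facts, each proved via the characterization exactly as in the no-maximum case, are used: (I) if no subset of $\A \setminus B$ has size $\lambda^*_B$, then homogeneity holds over $B$; and (II) if $\A \setminus A$ has a subset of size $\lambda^*_A$ and so does $\A \setminus (A \cup S)$ for every $S \subseteq \A \setminus A$ with $|S| = \lambda^*_A$, then homogeneity holds over $A$. Now, given $A$: if some set $A'' \supseteq A$ has $\lambda^*_{A''} < \lambda^*_A$, apply the induction hypothesis to $A''$; otherwise $\lambda^*_{A''} = \lambda^*_A$ for every set $A'' \supseteq A$, and then either $\lambda^*_A$ fails to be attained over some such $A''$, in which case (I) gives homogeneity over that $A''$, or $\lambda^*_A$ is attained over every such $A''$, in which case (II) gives homogeneity over $A$ itself. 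One must also check that these three cases are exhaustive (they are, since $\lambda^*_{A''} \le \lambda^*_A$ always).

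The subtle point --- and the step I expect to be the main obstacle --- is precisely this dichotomy, engineered so that the induction runs on the ordinal $\lambda^*_A$, which strictly decreases when we pass to $A''$, rather than on a transfinite chain of urelement-sets. The naive strategy of repeatedly adjoining disjoint $\lambda^*$-sized blocks of urelements until the spectrum of realized cardinals drops would require choosing such blocks at proper-class-many stages, which need not be possible in $\ZFCUR$ since it lacks global choice; casting the recursion as an ordinal induction sidesteps this entirely, at the cost of verifying lemmas (I) and (II) and the exhaustiveness of the cases.
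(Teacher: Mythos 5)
Your proposal is correct, and its core machinery coincides with the paper's: the reduction of homogeneity over $A$ to the existence, for each infinite $S$ disjoint from $A$, of a duplicate of $S$ disjoint from $A\cup S$ (proved by composing two swap automorphisms supported off $A$), and your proof of part (2) via the set $S\cup(A'\setminus A)$ is essentially the paper's argument with $E=(A'-A)\cup D$. Where you genuinely diverge is in the existence part: you prove (3) directly (getting (1) as the case $A=\emptyset$) by an ordinal induction on $\lambda^*_A=\sup\{|Z|:Z\subseteq\A,\ Z\cap A=\emptyset\}$, with the auxiliary facts (I) and (II) and the three-way case split, whereas the paper proves (1) by a short dichotomy on the Tail axiom and then gets (3) from (1) and (2). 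Concretely, the paper argues: if Tail holds, take $A$ with the least tail cardinal $\kappa$; any $D$ disjoint from $A$ injects into a tail of $A$, hence has size at most $\kappa$, while a tail of $D\cup A$ still has size $\kappa$ and so contains the needed duplicate; if Tail fails, take $A$ with no tail, so every $D$ of size $\kappa$ disjoint from $A$ admits some $E$ disjoint from $A$ of size $\kappa^+$, and $E-D$ contains a duplicate. This two-case argument replaces your entire induction (and in particular your worry about class-many choices does not arise, since no recursion along urelement blocks is ever needed); on the other hand, your route has the modest advantages of proving (3) without passing through (1)+(2), of making the finite-$B$ case of the reduction and the converse direction of your characterization explicit, and of exhibiting exactly which spectra of realized-off-$A$ cardinals force homogeneity over $A$ itself versus over a proper extension $A''$.
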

\begin{proof}
 To show that homogeneity holds over some $A \subseteq \A$, it suffices to show that every infinite $D \subseteq \A$ disjoint from $A$ has a duplicate $D'$ that is also disjoint from $A$. To see this, suppose that $D = B \cup C$ is disjoint from $A$ and $B \sim C$. Then fix some duplicate $D'$ of $D$ that is disjoint from $A$, within which there is a duplicate $B'$ of $B$. We can then let $\pi_1$ be an automorphism that swaps $B$ and $B'$ while point-wise fixing $A$ and $\pi_2$ be an automorphism that swaps $B'$ and $C$ while point-wise fixing $A$. The composition of $\pi_1$ and $\pi_2$ is then a desired automorphism.

(1) Suppose Tail holds. Let $A$ be a set of urelements with the least tail cardinal $\kappa$. Suppose that $D$ is disjoint from $A$, which has size at most $\kappa$. Let $E$ be a tail of $D \cup A$. By the minimality of $\kappa$, $E$ must have size $\kappa$; so for some $E'\subseteq E$, $E' \sim D$. $E'$ is then a duplicate of $D$ that is disjoint from $A$. If Tail does not hold, then $\A$ is a proper class and we can fix an $A \subseteq \A$ without tails. Suppose that $D \subseteq \A$ is disjoint from $A$. We may assume $D$ is infinite and hence has size $\kappa$ for some infinite cardinal $\kappa$. Then there must be some $E \subseteq \A$ of size $\kappa^+$ that is disjoint from $A$, which means $E - D$ has size $\kappa^+$. So $E$ has a subset which duplicates $D$.

(2) Suppose that $A \subseteq A' \subseteq \A$ and homogeneity holds over $A$. Let $D$ be an infinite set of urelements disjoint from $A'$. Let $E = (A'- A) \cup D$. Since $E$ is infinite, by AC it can be partitioned into a pair of duplicates $E_1$ and $E_2$ such that $E \sim E_1 \sim E_2$. By homogeneity over $A$, there is an automorphism $\pi$ such that $\pi E_1 = E$ and $\pi$ point-wise fixes $A$. Since $E_1$ has a duplicate disjoint from $A$, by applying $\pi$ it follows that $E$ has a duplicate disjoint from $A$. Let $E'$ be such a duplicate of $E$, which is also disjoint from $A'$. Since $D \subseteq E$, $E'$ has a subset that duplicates $D$. Hence, homogeneity holds over $A'$.

(3) is an immediate consequence of (1) and (2).
\end{proof}
\noindent AC cannot be dropped in Lemma \ref{homogeneitylemma}: there are models of $\ZFUR$ where homogeneity holds over no set of urelements (\cite[Theorem 46]{yao2023set}).

\subsection{Implication diagram}
\begin{theorem}\label{maintheorem1}
Over $\ZFCUR$, the following implication diagram holds. The diagram is complete: if the diagram does not indicate that $\varphi$ implies $\psi$, then $\ZFCUR$ $+$ $\varphi \nvdash \psi$ if $\ZFCUR$ is consistent.
\end{theorem}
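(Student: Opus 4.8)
The plan is to split the theorem into its two assertions and handle them separately: \emph{soundness}, that every arrow drawn in the diagram is a theorem of $\ZFCUR$; and \emph{completeness}, that for every pair of nodes $\varphi,\psi$ with no path $\varphi\to\psi$ there is a model of $\ZFCUR+\varphi+\neg\psi$, so that $\ZFCUR+\varphi\nvdash\psi$ whenever $\ZFCUR$ is consistent.

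For soundness I would run through the arrows in increasing order of effort. The definitional ones are immediate: RP gives RP$^-$; Plenitude gives Closure because a supremum of ordinals is an ordinal; DC$_{<Ord}$ gives every DC$_\kappa$-scheme, and DC$_\kappa$-scheme gives DC$_\lambda$-scheme for $\lambda\le\kappa$ by restricting the threading function. Two arrows are already in hand from the excerpt: ``$\A$ is a set'' $\to$ Collection is Proposition \ref{weakcollection}, and Collection $\wedge$ DC$_\omega$-scheme $\to$ RP is Theorem \ref{collection+dc->rp}. The remaining arrows are short $\ZFCUR$-arguments. For RP $\to$ Collection and RP $\to$ DC$_\omega$-scheme: reflect $\varphi$ together with the corresponding witness-closure and pairing-closure conditions into a transitive set $t$ containing the relevant parameters; then $t$ already collects witnesses for the reflected relation, and, because $t$ is a set, set-level dependent choice (which follows from AC) threads $\varphi$ through $t$. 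For ``$\A$ is a set'' $\to$ Tail, note $\A\setminus A$ is a tail of every set of urelements $A$; for ``$\A$ is a set'' $\to$ RP and $\to$ DC$_{<Ord}$, note that when $\A$ is a set the hierarchy $V_\alpha(\A)$ exhausts $U$, so L\'evy--Montague reflection and, with AC, every DC$_\kappa$-scheme go through exactly as in pure ZFC. For Plenitude $\to$ Duplication: realize $|A|^{+}$ by a set of urelements $D$; since $|D\setminus A|=|A|^{+}\ge|A|$, a subset of $D\setminus A$ duplicates $A$. Any remaining arrows, those involving Closure, Duplication or homogeneity, are of the same kind — AC plus cardinal arithmetic, using Lemma \ref{homogeneitylemma} whenever an automorphism fixing a prescribed set of urelements is required.

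For completeness I would build the counterexamples from a single parametrized template. Starting from a countable transitive model $M$ of ZFC, fix inside $M$ an index class $I$ — a set or a proper class — partitioned into blocks of prescribed cardinalities; declare the members of $I$ to be urelements, decree which subsets of $I$ count as legitimate sets of urelements (all of them, or only those bounded in the block ordering, and so on), and let $V(I)$ be the class of objects built hereditarily over legitimate sets of urelements. One checks that $\langle V(I),\in,I\rangle\models\ZFCUR$, the verification of Replacement and Separation for $I$ a proper class being precisely where the restricted-Collection argument of Proposition \ref{weakcollection} is used. Tuning the block-size pattern and the legitimacy rule turns Plenitude, Closure, Duplication, Tail and ``$\A$ is a set'' on and off more or less independently, and — crucially for the DC-hierarchy — imposing a ceiling $\kappa$ on the cardinalities of legitimate sets of urelements kills DC$_{\kappa'}$-scheme for $\kappa'>\kappa$ while keeping the schemes below (a threading sequence of length $\kappa'$ would produce, via Replacement, a set of urelements of size $\ge\kappa$ where none is allowed); when even finer control over choice is needed one passes to a symmetric submodel of such a $V(I)$. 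For each missing edge $\varphi\to\psi$ one then exhibits an instance of the template satisfying $\varphi$ and refuting $\psi$, and a single instance typically kills many edges at once: for example a proper class of urelements every set of which is finite satisfies Duplication but refutes Collection, RP, RP$^-$, Tail, Closure, ``$\A$ is a set'' and every DC$_\kappa$-scheme.

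I expect the hard part to be the completeness half. The first difficulty is calibration: for each infinite $\kappa$ one must produce a model in which DC$_\kappa$-scheme fails while every DC$_\lambda$-scheme for $\lambda<\kappa$ (and Collection, when the target node demands it) survives — this requires a careful choice of the legitimacy rule, and, where symmetric submodels are used, of the group and its filter of subgroups, together with a proof that the result is still a model of $\ZFUR$ with a proper class of urelements. The second difficulty is separating RP$^-$ from RP over $\ZFCUR$, which does not obviously fall out of the generic template and may require a urelement adaptation of the L\'evy--Vaught construction. Beyond these, the pervasive but routine cost is checking that each template model sits at \emph{exactly} its intended node — satisfying the advertised axiom and refuting every axiom strictly above it in the diagram — and this placement bookkeeping is where most of the length of the proof would go.
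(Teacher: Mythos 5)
Your soundness half omits exactly the arrows that carry the content of the theorem, and the catch-all ``AC plus cardinal arithmetic, using Lemma \ref{homogeneitylemma}'' does not cover them. The edges Collection $\rightarrow$ DC$_\omega$-scheme (without which Theorem \ref{collection+dc->rp} cannot give the edge Collection $\rightarrow$ RP), Plenitude $\rightarrow$ DC$_{<Ord}$, DC$_{<Ord}$ $\rightarrow$ Collection, and RP$^-$ $\rightarrow$ Collection each need a dedicated idea: a $(\varphi,E)$-extension lemma built from homogeneity together with a cofinality-$\kappa$ recursion producing a set closed under ${<}\kappa$-sequences and the $\varphi$-relation (Lemma \ref{phiextension}, Theorem \ref{Plenitude->DCS}); the lemma that the DC$_\kappa$-scheme plus a proper class of urelements forces $\kappa$ to be realized (Lemma \ref{lemma:DCK->KRealized}); the dichotomy Collection $\rightarrow$ (Plenitude $\lor$ Tail); and the tail-based DC$_\kappa$ lemma (Lemma \ref{Tailkappa->DCkappa}). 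None of these appear in your plan. Moreover, your reflection argument proves RP $\rightarrow$ Collection but does not adapt to the edge actually in the diagram, RP$^-\rightarrow$ Collection: RP$^-$ reflects only the single true statement downward for the given parameters, so from $\forall x\in w\,\exists y\in t\,\varphi^t(x,y,u)$ you cannot return to $\varphi(x,y,u)$ for the new witnesses $y\in t$. The paper instead reflects ``every $\lambda<\kappa$ is realized by a set of urelements disjoint from $A$'', reads off a tail of $A$ inside the reflecting set, and invokes Tail $\rightarrow$ Collection (Theorem \ref{tail->collection}). Relatedly, your anticipated difficulty of ``separating RP$^-$ from RP'' rests on a misreading: the diagram asserts RP$^-\rightarrow$ Collection $\rightarrow$ RP, so the three are equivalent over $\ZFCUR$ (Corollary \ref{urcharacterization}); what is owed is a proof of that bent arrow, not a counterexample.

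For completeness your template is essentially the paper's device: your ``legitimacy rule'' is an $\A$-ideal $\I$ and your $V(I)$ is $U^\I=\{x: ker(x)\in\I\}$ (Definition \ref{normalideal}), and your kernel-ceiling idea for killing DC$_{\kappa'}$-schemes is precisely Lemma \ref{lemma:DCK->KRealized}. Two concrete problems remain. First, verifying Replacement in the restricted model is not ``precisely where Proposition \ref{weakcollection} is used''; it requires the automorphism argument of Theorem \ref{smallkernelmodel}: using Lemma \ref{idealpermutation}, swap a urelement of the image lying outside $ker(w)\cup ker(u)$ with a fresh one while fixing $\I$, and contradict the uniqueness clause to show the image's kernel stays in $\I$. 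Second, symmetric submodels are not an available escape hatch, since every countermodel must satisfy full AC (and AC does hold in every $U^\I$); the calibrations you defer --- Collection $\land$ DC$_\kappa$-scheme $\nrightarrow$ DC$_{\kappa^+}$-scheme, DC$_\kappa$-scheme $\nrightarrow$ Closure, Collection $\nrightarrow$ Duplication, Closure $\land$ Duplication $\nrightarrow$ DC$_{\omega_1}$-scheme, Closure $\nrightarrow$ DC$_\omega$-scheme --- are exactly where the work lies, and the paper settles them in Theorem \ref{zfcurindependece} by choosing the number of urelements in $U$ and the ideal (countable sets, finite sets, sets almost contained in a fixed $A$, sets of size ${<}\,\omega_{\kappa^+}$, sets of size ${\le}\,\kappa$), with tail cardinals and Lemma \ref{Tailkappa->DCkappa} certifying which schemes and which of Collection/Closure survive in each case. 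As it stands, both halves of your proposal leave the decisive steps unproved.
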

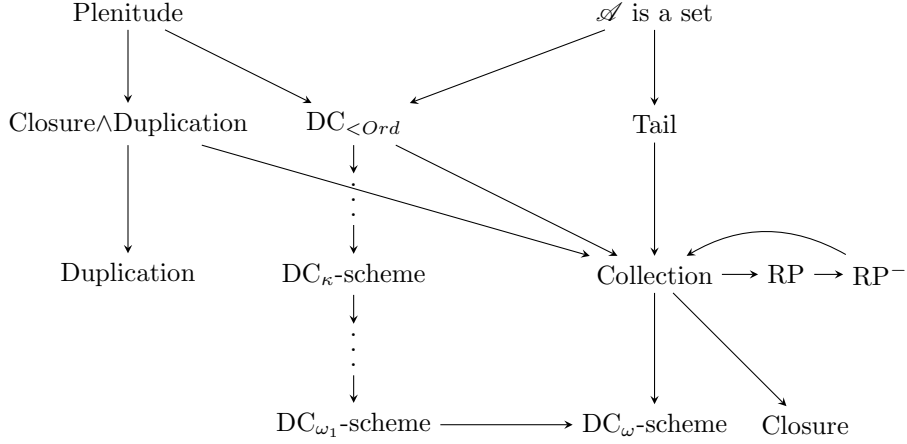
\begin{figure}[hbt!]
\begin{center}
\begin{tikzpicture}
\begin{scope}[every node/.style={}]
    \node (A) at (10, -2.5) {DC$_\omega$-scheme};
    \node (B) at (10, 1.5){Tail};
    \node (C) at (3,3) {Plenitude};
     \node (D) at (6,1.5) { DC$_{<Ord}$ };
    \node (E) at (6, -0.5) {DC$_\kappa$-scheme};
       \node (F) at (12, -2.5) {Closure};
          \node (G) at (11.75, -0.5) {RP};

    \node (H) at (3, 1.5) {\textup{Closure$\land$Duplication}};
    \node (I) at (10, -0.5) {Collection};
        \node (J) at (13, -0.5) {RP$^-$};

    \node (L) at (6, 0.7) {.};
    \node (M) at (6, 0.5) {.};
    \node (N) at (6, 0.3) {.};
    \node (O) at (3, -0.5) {Duplication};
    \node (P) at (6, -1.7) {.};
    \node (Q) at (6, -1.5) {.};
    \node (R) at (6, -1.3) {.};
    \node (S) at (6, -2.5) {DC$_{\omega_1}$-scheme};
    \node (T) at (10, 3) {$\A$ is a set};
    
\end{scope}

\begin{scope}[>={stealth},
              every node/.style={fill=white,circle},
              every edge/.style={draw=black}]
 
    \path [->] (T) edge (B);
    \path [->] (T) edge (D);
    \path [->] (C) edge (H);
    \path [->] (B) edge (I);
    \path [->] (I) edge (A);

     \path [->] (S) edge (A);

    \path [->] (H) edge (I);
    \path [->] (H) edge (O);
    
     \path [->] (C) edge (D);

    \path [->] (D) edge (I);
    \path [->] (D) edge (L);
    \path [->] (N) edge (E);
    \path [->] (E) edge (R);
     \path [->] (P) edge (S);
    
    \path [->] (I) edge (F);

    \path [->] (I) edge (G);
   \path [->] (G) edge (J);
   
       \path [->] (J) edge[bend right=30] (I); 
\end{scope}
\end{tikzpicture}
 \caption{Implication Diagram in $\ZFCUR$}
 \label{ZFCUdiagram}
\end{center}
\end{figure}
\FloatBarrier
\noindent The fact that Collection implies DC$_\omega$-scheme is first proved by Schlutzenberg in an answer to a question on Mathoverflow \cite{387471}; Plenitutde and Tail are implicitly discussed in Schlutzenberg's proof. All other non-trivial implications in Diagram \ref{ZFCUdiagram} are new, many of which, such as Tail $\rightarrow$ Collection, will be useful for the later discussions of forcing. The proof of Collection $\rightarrow$ DC$_\omega$-scheme in this paper also takes a different route by using Lemma \ref{homogeneitylemma}. The rest of this subsection establishes all the non-trivial implications in the diagram; the completeness of the diagram will be proved in \ref{section2.5}.

We first prove that Plenitude implies DC$_{<Ord}$. Given a formula $\varphi(x, y, u)$ with some parameter $u$, for any ordinals $\alpha, \alpha', \kappa, \kappa'$ and set of urelements $E$, we say that $\<\kappa' \alpha'>$ is a $(\varphi, E)$\textit{-extension} of $\<\kappa, \alpha>$ if (i) $\alpha \leq \alpha'$, and (ii) whenever $A \subseteq \A$ extends $E$ by $\kappa$-many urelements, there is some $B \subseteq \A$ disjoint from $A$ with $B \sim \kappa'$ such that for every $x \in V_{\alpha}(A)$, there is some $y \in V_{\alpha'} (A \cup B)$ such that $\varphi (x, y, u)$.

\begin{lemma}[$\ZFCUR$]\label{phiextension}
Suppose that Plenitude holds and $\forall x \exists y \varphi(x, y, u)$. Then every $\langle \kappa, \alpha \rangle$ has a $(\varphi, ker(u))$-extension.
\end{lemma}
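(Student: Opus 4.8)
The plan is to reduce the statement to a single ``canonical'' extension of $E:=ker(u)$ by $\kappa$ urelements and then transport the conclusion to every other such extension by an automorphism. The key enabling fact, which I would prove first, is that under Plenitude homogeneity holds over \emph{every} set of urelements. By Lemma \ref{homogeneitylemma}(2) it is enough to check this for $\emptyset$, i.e.\ that any two equinumerous sets of urelements $B\sim C$ are interchanged by some automorphism: if $B\cup C$ is finite one simply maps $B\setminus C$ bijectively onto the equinumerous $C\setminus B$ and fixes $B\cap C$; if $B\cup C$ is infinite, Plenitude supplies a reservoir $R$ of urelements disjoint from $B\cup C$ with $|R|=|B\cup C|$, and inside $B\cup C\cup R$ the complements of $B$ and of $C$ both have size $|B|=|C|$, so a permutation of $B\cup C\cup R$ taking $B$ onto $C$ exists; extend it by the identity.

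Next, fix $\langle\kappa,\alpha\rangle$. Using Plenitude, pick $D^*\subseteq\A$ disjoint from $E$ with $D^*\sim\kappa$, and set $A^*=E\cup D^*$ and $W^*=V_\alpha(A^*)$. For each $x\in W^*$ the class $\{y:\varphi(x,y,u)\}$ is nonempty, so $\mu_x:=\min\{\,|ker(y)\setminus A^*|:\varphi(x,y,u)\,\}$ is a cardinal; by Replacement $\{\mu_x:x\in W^*\}$ is a set, so $\mu^*:=\sup_{x\in W^*}\mu_x$ exists, and I put $\kappa':=\mu^*$ and fix (Plenitude again) $B^*\subseteq\A$ disjoint from $A^*$ with $B^*\sim\kappa'$. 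The role of homogeneity is that now every $x\in W^*$ has a witness inside $V(A^*\cup B^*)$: pick a witness $y$ with $|ker(y)\setminus A^*|=\mu_x\le|B^*|$, choose $C'\subseteq B^*$ with $|C'|=|ker(y)\setminus A^*|$, and by homogeneity over $A^*$ get an automorphism $\pi$ fixing $A^*$ pointwise with $\pi[ker(y)\setminus A^*]=C'$; since $ker(x),ker(u)\subseteq A^*$ this $\pi$ fixes $x$ and $u$, hence $\varphi(x,\pi y,u)$ while $ker(\pi y)\subseteq A^*\cup B^*$. Then, exactly as in the proof of Proposition \ref{weakcollection}, set $\rho_x:=\min\{\beta:\exists y\in V_\beta(A^*\cup B^*)\ \varphi(x,y,u)\}$ and, using Replacement once more, $\alpha':=\max\!\big(\alpha,\sup_{x\in W^*}\rho_x\big)$. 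Then $\alpha\le\alpha'$, and $\langle\kappa',\alpha'\rangle$ satisfies the required property for $A^*$, taking $B=B^*$.

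Finally, transfer: let $A\subseteq\A$ extend $E$ by $\kappa$ urelements, so $A=E\cup D$ with $D:=A\setminus E\sim\kappa\sim D^*$ and $D,D^*$ both disjoint from $E$. By homogeneity over $E$ there is an automorphism $\rho$ fixing $E$ pointwise with $\rho[D^*]=D$; then $\rho[A^*]=A$ and $\rho(u)=u$. Since automorphisms commute with the $V_\beta(\cdot)$-hierarchy, $\rho$ maps $V_\alpha(A^*)$ onto $V_\alpha(A)$ and $V_{\alpha'}(A^*\cup B^*)$ onto $V_{\alpha'}(A\cup B)$ for $B:=\rho[B^*]$, which is disjoint from $A$ with $B\sim\kappa'$; and for $x=\rho(x')\in V_\alpha(A)$, applying $\rho$ to a witness $y'\in V_{\alpha'}(A^*\cup B^*)$ of $x'$ produces a witness $\rho(y')\in V_{\alpha'}(A\cup B)$ of $x$. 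Hence $\langle\kappa',\alpha'\rangle$ is a $(\varphi,E)$-extension of $\langle\kappa,\alpha\rangle$. The one genuinely nontrivial point is the \emph{uniformity} of $\kappa'$ and $\alpha'$ across all admissible $A$ — a priori different $A$ could force witnesses of unbounded kernel-size and rank — and this is precisely what the strengthened homogeneity delivers, by reducing every admissible $A$ to the single case $A^*$; the rest is Replacement together with the rank-minimization already used for Proposition \ref{weakcollection}.
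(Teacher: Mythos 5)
Your proof is correct and follows essentially the same route as the paper's: establish from Plenitude that homogeneity holds over every set of urelements, fix one extension $A^*$ of $ker(u)$ by $\kappa$ urelements, minimize witness kernel sizes and ranks to obtain $\kappa'$ and $\alpha'$ with witnesses pushed into $V_{\alpha'}(A^*\cup B^*)$ by automorphisms fixing $A^*$, and then transfer to an arbitrary such $A$ by an automorphism fixing $ker(u)$ pointwise. The only differences (deriving the strengthened homogeneity via Lemma \ref{homogeneitylemma}(2) from the case $A=\emptyset$, minimizing $|ker(y)\setminus A^*|$ instead of $|ker(y)|$, and making $\alpha\le\alpha'$ explicit) are cosmetic.
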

\begin{proof}
By the remark in the proof of Lemma \ref{homogeneitylemma}, Plenitude implies that homogeneity holds over every set of urelements. Fix some $\langle \kappa, \alpha \rangle$ and some $A \subseteq \A$ extending $ker(u)$ with $\kappa$-many urelements. For each $x\in V_\alpha(A)$, define $\theta_x$ to be the least cardinal such that there is some $y$ with $\varphi(x, y, u)$ and $ker(y) \sim \theta_x$, and let $\kappa' = Sup\{ \theta_x : x \in  V_\alpha(A)\}$. Fix some infinite $B$ of size $\kappa'$ that is disjoint from A, which exists by Plenitude. Then for every $ x \in V_\alpha(A)$, fix some $y'$ such that $\varphi(x, y', u)$ and $ker(y') \sim \theta_x$. $ker(y') - A$ is equinumerous to a subset of $B$, so by homogeneity over $A$, there is an automorphism $\pi$ that moves $ker(y')$ into $B$ and point-wise fixes $A$. It follows that $\varphi(x, \pi y', u)$ and $\pi y' \in V(A \cup B)$. Thus, each $x \in V_\alpha(A)$ has some $y \in V(A\cup B)$ with $\varphi(x, y, u)$, so there is some large enough $\alpha'$ such that every $x \in V_\alpha(A)$ has some $y \in V_{\alpha'}(A\cup B)$ with $\varphi(x, y, u)$. Furthermore, for every $A'$ extending $ker(u)$ by $\kappa$-many urelements, by homogeneity over $ker(u)$, there is an automorphism $\pi$ with $\pi A = A'$ that point-wise fixes $\ker(u)$; so $\pi B$ will be such that every $x \in V_\alpha(A')$ has some $y \in V_{\alpha'}(A' \cup \pi B)$ with $\varphi(x, y, u)$. Therefore, $\<\kappa', \alpha'>$ is indeed a $(\varphi, ker(u))$-extension of $\<\kappa, \alpha>$.
\end{proof}

\begin{theorem}[$\ZFCUR$]\label{Plenitude->DCS}
Plenitude $\rightarrow$ DC$_{<Ord}$.
\end{theorem}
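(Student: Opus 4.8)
The plan is to use Lemma~\ref{phiextension} to establish the DC$_\kappa$-scheme for each fixed $\kappa$; since $\kappa$ is arbitrary this gives DC$_{<Ord}$. Fix $\varphi$, $u$ with $\forall x\,\exists y\,\varphi(x,y,u)$ and fix an infinite cardinal $\kappa$. Two preliminary observations: Plenitude forces $\A$ to be a proper class (were $\A$ a set of cardinality $\lambda$, the ordinal $\lambda^{+}$ could not be realized), so $\A\setminus D$ is a proper class for every set $D$ and fresh urelements are always available; and, by the remark in the proof of Lemma~\ref{homogeneitylemma}, Plenitude yields homogeneity over every set of urelements.

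First I would build, by a \emph{definable} recursion, a strictly increasing continuous sequence $\langle(\kappa_\xi,\alpha_\xi):\xi\le\kappa\rangle$ of (cardinal, ordinal) pairs with $\alpha_0\ge\kappa$ such that $\langle\kappa_{\xi+1},\alpha_{\xi+1}\rangle$ is a $(\varphi,ker(u))$-extension of $\langle\kappa_\xi,\alpha_\xi+\omega\rangle$ with $\kappa_{\xi+1}>\kappa_\xi$ and $\alpha_{\xi+1}>\alpha_\xi$. Such an extension exists by Lemma~\ref{phiextension}, and it may be enlarged in both coordinates (in the first coordinate because $\A$ is a proper class, so the witnessing set $B$ can be grown; in the second by monotonicity of the $V_\gamma$-hierarchy), so at successor steps one takes the least such pair in the canonical well-order of $Ord\times Ord$, and at limits one takes suprema. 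Since every choice made is definable, this recursion is legitimate in $\ZFCUR$. Put $\kappa^*=\kappa_\kappa$, $\alpha^*=\alpha_\kappa$; fix $A^*\subseteq\A$ with $ker(u)\subseteq A^*$ and $|A^*\setminus ker(u)|=\kappa^*$; well-order $A^*\setminus ker(u)$ in type $\kappa^*$ and let $A^*_\xi$ consist of $ker(u)$ together with the first $\kappa_\xi$ of these urelements, so $\langle A^*_\xi:\xi\le\kappa\rangle$ increases with $A^*_\kappa=A^*$, $A^*_\xi$ extends $ker(u)$ by $\kappa_\xi$ urelements, and $|A^*_{\xi+1}\setminus A^*_\xi|=\kappa_{\xi+1}$.

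The core is the reflection fact $(\dagger)$: for every $\xi<\kappa$ and every $x\in V_{\alpha_\xi+\omega}(A^*_\xi)$ there is $y\in V_{\alpha_{\xi+1}}(A^*_{\xi+1})$ with $\varphi(x,y,u)$. To prove it, apply the extension property to $A=A^*_\xi$ to get $B$ disjoint from $A^*_\xi$ with $|B|=\kappa_{\xi+1}$ such that each $x\in V_{\alpha_\xi+\omega}(A^*_\xi)$ has a witness in $V_{\alpha_{\xi+1}}(A^*_\xi\cup B)$; then homogeneity over $A^*_\xi$ gives an automorphism $\pi$ fixing $A^*_\xi$ pointwise with $\pi B=A^*_{\xi+1}\setminus A^*_\xi$. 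Since $\pi$ fixes $ker(u)$ pointwise it fixes $u$, it permutes $V_{\alpha_\xi+\omega}(A^*_\xi)$, and it carries $V_{\alpha_{\xi+1}}(A^*_\xi\cup B)$ onto $V_{\alpha_{\xi+1}}(A^*_{\xi+1})$, so applying $\pi$ to the reflection statement yields $(\dagger)$. Finally, fix (by AC) a well-order $\prec$ of the \emph{set} $X=V_{\alpha^*}(A^*)$ and define $f:\kappa\to X$ recursively by taking $f(\xi)$ to be the $\prec$-least $y\in V_{\alpha_{\xi+1}}(A^*_{\xi+1})$ with $\varphi(f\restriction\xi,y,u)$. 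This never stalls: if $f(\beta)\in V_{\alpha_{\beta+1}}(A^*_{\beta+1})$ for all $\beta<\xi$, then all these values lie in $V_{\alpha_\xi}(A^*_\xi)$ and, since $\beta<\xi<\kappa\le\alpha_\xi$, a few levels up for Kuratowski pairing puts $f\restriction\xi$ in $V_{\alpha_\xi+\omega}(A^*_\xi)$, so $(\dagger)$ supplies a witness inside $V_{\alpha_{\xi+1}}(A^*_{\xi+1})\subseteq X$. Hence $f$ is total on $\kappa$ with $\varphi(f\restriction\xi,f(\xi),u)$ for all $\xi<\kappa$, which is exactly the DC$_\kappa$-scheme.

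The main obstacle is precisely what necessitates this bookkeeping: the partial threads $f\restriction\xi$ accumulate kernels, so one cannot simply invoke $\forall x\,\exists y\,\varphi$ inside a single $V_{\alpha^*}(A^*)$ and hope the witness stays there. Stratifying $A^*$ so that the chunk $A^*_{\xi+1}\setminus A^*_\xi$ supplies exactly the urelements needed at stage $\xi$, and transporting the abstract extension of Lemma~\ref{phiextension} onto this fixed stratification via homogeneity, is what lets the recursion close up; the $+\omega$ slack in the budget is there to absorb the pairing overhead of forming $f\restriction\xi$. A secondary point to be careful about is keeping both recursions definable, so that no choice principle beyond the AC available in $\ZFCUR$ is smuggled in.
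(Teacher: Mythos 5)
Your proposal is correct, and it follows the paper's route in its core: the same key Lemma~\ref{phiextension}, the same use of homogeneity over each stage (available everywhere under Plenitude), and the same device of an increasing sequence of (cardinal, ordinal) pairs with a matching increasing chain of urelement sets so that $\varphi$-witnesses for objects of stage $\xi$ appear at stage $\xi+1$. Where you genuinely diverge is the endgame. The paper first reduces to regular $\kappa$, arranges $\mathrm{cf}(\gamma_\alpha)=\kappa$ at every stage so that the union $\barx=\bigcup_{\alpha<\kappa}V_{\gamma_\alpha}(A_\alpha)$ is closed under ${<}\kappa$-sequences as well as under the $\varphi$-relation, and then extracts the thread by applying the set-level DC$_\kappa$ (a consequence of AC) to $\barx$. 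You instead build the thread outright by a definable recursion, taking the $\prec$-least witness in a fixed well-order of $V_{\alpha^*}(A^*)$, with the $\alpha_0\ge\kappa$ requirement and the $+\omega$ padding absorbing the coding of $f\restriction\xi$; this buys you a uniform treatment of all infinite $\kappa$ (no regular/singular reduction, no cofinality bookkeeping) and makes the avoidance of hidden choices explicit, at the cost of slightly heavier stratification bookkeeping ($A^*$ fixed in advance and sliced into chunks of size $\kappa_{\xi+1}$). One small point to tighten: the monotonicity of the first coordinate of a $(\varphi,\ker(u))$-extension (growing the witnessing set $B$) should be justified by Plenitude, which supplies a set of urelements of any prescribed size disjoint from any given set, rather than merely by ``$\A$ is a proper class''; since you assume Plenitude throughout, this is a phrasing issue, not a gap.
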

\begin{proof}
Suppose that Plenitude  holds. It suffices to show that the DC$_\kappa$-scheme holds for every regular cardinal $\kappa$ since by a standard argument as in \cite[Theorem 8.1]{jech2008axiom}, this will imply the DC$_\lambda$-scheme for every singular $\lambda$ as well. So suppose that $\kappa$ is regular and $\forall x \exists y \varphi(x, y, u)$ with some parameter $u$. We will construct a set $\barx$ that is closed under $<$$\kappa$-sequences (i.e., $\barx^{<\kappa} \subseteq \barx$) and the $\varphi$-relation (i.e., $\forall x \in \barx \exists y \in \barx \varphi(x, y, u)$). This will suffice for the DC$_\kappa$-scheme because we can apply DC$_\kappa$ to $\barx$ to get a desired $\kappa$-sequence.

We first construct a $\kappa$-sequence of pairs of ordinals $\langle \langle \lambda_\alpha, \gamma_\alpha \rangle : \alpha < \kappa \rangle$ by recursion as follows. Let $A_0$ be a set of urelements that extends $\ker(u)$ by $\lambda_0$-many urelements and $\gamma_0$ be an ordinal with cf$(\gamma_0) = \kappa$. For each ordinal $\alpha < \kappa$, we let $\langle \lambda_{\alpha+1} , \gamma_{\alpha+1} \rangle$ be the lexicographical-least $(\varphi, ker(u))$-extension of $\<\lambda_\alpha ,\gamma_\alpha>$ with cf$(\gamma_\alpha) = \kappa$, which exists by Lemma \ref{phiextension}; we take the union at the limit stage. By Plenitude, there exists a $\kappa$-sequence of sets of urelements $\langle A_\alpha : \alpha < \kappa \rangle$, where $A_\alpha$ extends $\bigcup_{\beta < \alpha} A_\beta \cup ker(u)$ by $\lambda_\alpha$-many urelements. Let $\barx = \bigcup_{\alpha < \kappa} V_{\gamma_\alpha} (A_\alpha)$. For any $x \in V_{\gamma_\alpha} (A_\alpha)$, there is some $B$ disjoint from $A_\alpha$ witnessing the fact that $\<\lambda_{\alpha +1}, \gamma_{\alpha+1}>$ is a $(\varphi, ker(u))$-extension of $\<\lambda_\alpha ,\gamma_\alpha>$. By homogeneity over $A_\alpha$, it follows that $A_{\alpha +1} - A_\alpha$ is a witness as well, so there is some $y \in V_{\gamma_{\alpha +1}}(A_{\alpha +1})$ with $\varphi(x, y, u)$. This shows that $\barx$ is closed under the $\varphi$-relation. Finally, $\barx^{<\kappa} \subseteq \barx$ because $\textup{cf} (\gamma_\alpha) = \kappa$ for each $\alpha < \kappa$ and $\kappa$ is regular. This completes the proof.
\end{proof}

\begin{theorem}[$\ZFCUR$]\label{Plenitude->Collection}
\
\begin{enumerate}
\item Closure $\land$ Duplication $\rightarrow$ Collection.
\item Plenitude $\rightarrow$ (Closure $\land$ Duplication $\land$ Collection).
\end{enumerate}
\end{theorem}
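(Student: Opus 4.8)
The plan is to prove (2) essentially by observing that Plenitude is strong enough to force everything through, and then to prove (1) as the genuinely new content. For (2): Plenitude trivially implies Closure, since if every ordinal is realized then in particular the supremum of any set of realized ordinals is an ordinal, hence realized. Plenitude also implies Duplication: given $A \subseteq \A$, pick an ordinal $\alpha$ with $A \sim \alpha$, then by Plenitude realize $\alpha + \alpha$ (or $\alpha \cdot 2$) by some set $E$ of urelements; since $E$ contains two disjoint subsets each equinumerous with $A$, at least one such subset is disjoint from $A$ itself (as $A$ can meet only one of a pair of disjoint sets of the same size in a way that leaves... more carefully: partition $E$ into $E_0 \sqcup E_1$ with $E_0 \sim E_1 \sim A$; since $E_0, E_1$ are disjoint, $A$ cannot be a subset of both, and in fact $E \setminus A$ still has size $\geq |A|$ because $|E| = 2|A| > |A|$ when $A$ is infinite — and the finite case is handled directly or is vacuous if one insists on infinite sets, cf.\ the proof of Lemma \ref{homogeneitylemma}). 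So Duplication holds. Then Collection follows from the conjunction Closure $\land$ Duplication by part (1), which we now address.

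For (1), assume Closure and Duplication, and suppose $\forall x \in w\, \exists y\, \varphi(x,y,u)$; fix $A_0 = ker(u)$. The idea is to build, by recursion on $x \in w$ — or rather by exhausting $w$ in stages — a single set of urelements $A$ and an ordinal height $\alpha$ such that every $x \in w$ already has a witness $y \in V_\alpha(A \cup A_0)$ with $\varphi(x,y,u)$, whence $v = V_\alpha(A \cup A_0)$ is the required collection set. Concretely: for each $x \in w$, let $y_x$ be a witness, let $\kappa_x$ be (the cardinality of) $ker(y_x) \setminus ker(x,u)$ — the genuinely new urelements needed — and let $\alpha_x$ be large enough that $y_x \in V_{\alpha_x}(ker(y_x) \cup ker(x) \cup A_0)$. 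The urelements in the various $ker(x)$ for $x \in w$ form a set (namely $ker(w)$, which exists in $\ZFUR$), and by Replacement the ordinals $\alpha_x$ and cardinals $\kappa_x$ form sets, so we may set $\alpha = \sup_{x \in w} \alpha_x$ and $\kappa = \sup_{x\in w}\kappa_x$. Now Closure is exactly what guarantees that $\kappa$ (a supremum of realized cardinals — realized because each $\kappa_x \leq |ker(y_x)|$, and one must be slightly careful here: $\kappa_x$ is realized as a \emph{subset} of a set of urelements, so one should phrase Closure's input set as genuinely realized ordinals, which may require passing to $|ker(y_x)|$ itself rather than $\kappa_x$) is realized, say by $C \subseteq \A$. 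Then Duplication gives a copy $C'$ of $ker(w) \cup A_0 \cup C$ disjoint from $ker(w) \cup A_0$; we extract from inside it enough fresh urelements to simultaneously host a homomorphic image of each $ker(y_x)$. Using the automorphisms that fix $ker(x) \cup A_0$ and move $ker(y_x)$ into this reservoir (available because after choosing $C$ large we are in a situation where the relevant sets of urelements are duplicable, and an automorphism swapping two disjoint equinumerous sets of urelements fixing a third set exists by the standard composition-of-transpositions argument from Lemma \ref{homogeneitylemma}), we replace each $y_x$ by $\pi_x y_x \in V_\alpha(\text{reservoir} \cup ker(x) \cup A_0) \subseteq V_\alpha(B)$ for one fixed set of urelements $B$. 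Then $V_\alpha(B)$ works.

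The main obstacle — and the place I expect to spend the most care — is the bookkeeping that guarantees a \emph{single} set of urelements $B$ simultaneously accommodates witnesses for \emph{all} $x \in w$ at once, rather than one $B_x$ per $x$. The subtlety is that the $ker(y_x)$ overlap $ker(x)$ and $ker(u)$ in uncontrolled ways and may have wildly varying sizes, so one cannot just take a single disjoint copy of a fixed set; one needs the reservoir to be large enough (size $\sup_x |ker(y_x)|$, hence the appeal to Closure to know this supremum is realized) and disjoint enough (hence Duplication, to push the reservoir off of $ker(w) \cup ker(u)$), and then one needs homogeneity-type automorphisms — which Duplication plus AC supply in the local form needed, exactly as in the remark opening the proof of Lemma \ref{homogeneitylemma} — to slide each witness into place while fixing the relevant parameters. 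An alternative cleaner packaging: first prove the auxiliary claim that Closure $\land$ Duplication implies that homogeneity holds over $ker(u)$ (indeed over every set of urelements), and then the argument becomes a near-verbatim repeat of the proof of Lemma \ref{phiextension} restricted to $x$ ranging over the set $w$ rather than over all $x$; I would likely present it that way to minimize friction.
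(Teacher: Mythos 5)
Your part (1) is essentially the paper's own argument: bound the kernel sizes of witnesses by a cardinal $\theta$ via Closure, use Duplication to produce a reservoir of size $\theta$ disjoint from a set $A$ containing $ker(w)\cup ker(u)$ over which homogeneity holds, slide each witness into $V(A\cup B)$ by an automorphism fixing $A$ pointwise, and finish with the restricted collection principle (Proposition \ref{weakcollection}); your ``cleaner packaging'' remark (Duplication gives homogeneity over every set of urelements, then repeat Lemma \ref{phiextension} with $x$ ranging over $w$) is also in the spirit of the paper, which instead invokes Lemma \ref{homogeneitylemma}(3). One caution in your bookkeeping: you ``let $y_x$ be a witness'' and then apply Replacement to $x\mapsto \kappa_x,\alpha_x$; since the assignment $x\mapsto y_x$ is exactly the kind of choice that Collection would license, you must make these ordinals definable from $x$ alone (e.g.\ $\theta_x=$ the \emph{least} cardinal realized by the kernel of \emph{some} witness for $x$, which is how the paper does it), not read them off a chosen $y_x$. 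Also, your explicit rank bound $\alpha$ is unnecessary: Proposition \ref{weakcollection} already absorbs that step.

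The genuine gap is in your derivation of Duplication from Plenitude in part (2). You realize $\alpha\cdot 2$ (where $\alpha\sim A$) by a set $E$ and argue that $|E\setminus A|\geq |A|$ because $|E|=2|A|>|A|$. Under AC (and we are in $\ZFCUR$), $2\cdot\kappa=\kappa$ for infinite $\kappa$, so this inequality is false exactly in the case that matters; indeed $E$ could even be a subset of $A$, making $E\setminus A$ empty, and neither cell of a partition $E=E_0\sqcup E_1$ need avoid $A$. The fix is the paper's: with $\kappa=|A|$ infinite, use Plenitude to realize $\kappa^{+}$ by some $B$; then $|B\cap A|\leq\kappa<\kappa^{+}$ forces $|B\setminus A|=\kappa^{+}$, so $B\setminus A$ contains a duplicate of $A$. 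With that repair (Closure from Plenitude is trivial, as you say), your reduction of (2) to (1) matches the paper.
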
 
\begin{proof}
(1) Assume  Closure and Duplication. Suppose that $\forall x \in w \exists y \varphi (x, y, u)$, where $w$ is a set and $u$ is a parameter. For every $x \in w$, let $\theta_x$ be the least $\theta$ realized by the kernel of some $y$ such that $\varphi(x, y, u)$, and define $\theta$ as the supremum of all such $\theta_x$. Let $A \subseteq \A$ be such that $ker(w) \cup \ker(u) \subseteq A$ and homogeneity holds over $A$,  which exists by Lemma \ref{homogeneitylemma} (3). By Closure and Duplication, there is a $B \subseteq \A$ of size $\theta$ that is disjoint from $A$. Then for every $x \in w$, fix a $y'$ such that $\varphi(x, y', u)$ and $ker(y') \sim \theta_x$. By homogeneity over $A$, there is an autormophism that moves $ker(y')$ into $A \cup B$ while point-wise fixing $A$. Thus, every $x \in w$ has a $y \in V(A \cup B)$ such that $\varphi(x, y, u)$. Therefore, Collection holds by applying Proposition \ref{weakcollection}.

(2) Assume Plenitude. Closure immediately follows. By (1), it remains to show Duplication holds. Given an infinite $A \subseteq \A$, $A \sim \kappa$ for some infinite cardinal $\kappa$ and we can fix some $B$ that realizes $\kappa^+$. So $B$ contains a subset that duplicates $A$.
\end{proof}

\begin{theorem}[$\ZFCUR$]\label{tail->collection}
Tail $\rightarrow$ Collection
\end{theorem}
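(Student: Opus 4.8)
The plan is to follow the pattern of the proof of Theorem~\ref{Plenitude->Collection}(1): reduce Collection to the restricted form of Proposition~\ref{weakcollection} by relocating all relevant witnesses into a single $V(A\cup B)$ via automorphisms, where $A$ is a set of urelements over which homogeneity holds and $B$ is a tail of $A$. So I would assume Tail and an instance $\forall x\in w\,\exists y\,\varphi(x,y,u)$ of the hypothesis of Collection. First, using Lemma~\ref{homogeneitylemma}(3), I would fix a set of urelements $A$ with $ker(w)\cup ker(u)\subseteq A$ over which homogeneity holds, and then, by Tail, fix a tail $B$ of $A$.

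The key observation is that a single tail $B$ of $A$ simultaneously bounds every set of urelements disjoint from $A$: for an arbitrary $y$, the set $ker(y)-A$ is a set of urelements disjoint from $A$, hence injects into $B$. So, given $x\in w$, I would pick any $y$ with $\varphi(x,y,u)$, let $C\subseteq B$ be the range of an injection of $ker(y)-A$ into $B$ (so $C\sim ker(y)-A$, and $C\cup(ker(y)-A)$ is disjoint from $A$), and invoke homogeneity over $A$ to obtain an automorphism $\pi$ with $\pi(ker(y)-A)=C$ that point-wise fixes $A$. Since $\pi$ point-wise fixes $ker(u)\subseteq A$ we have $\pi u=u$, so $\varphi(x,\pi y,u)$; and since $ker(\pi y)=\pi(ker(y))=(ker(y)\cap A)\cup C\subseteq A\cup B$, we get $\pi y\in V(A\cup B)$. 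This establishes $\forall x\in w\,\exists y\in V(A\cup B)\,\varphi(x,y,u)$, so Proposition~\ref{weakcollection}, applied with $A\cup B$ as the set of urelements, yields the desired collecting set $v$.

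The step that takes the most care is recognizing that one tail $B$ bounds the kernels (modulo $A$) of \emph{all} possible witnesses for \emph{all} $x\in w$ at once, so that no per-$x$ least-cardinal argument is needed; after that the argument is a routine homogeneity manipulation together with Proposition~\ref{weakcollection}, and the identity $ker(\pi y)=\pi(ker(y))$ for automorphisms $\pi$ is standard. I do not expect a genuine obstacle beyond checking that $A$ can be taken to contain $ker(w)\cup ker(u)$ while still admitting homogeneity, which is exactly what Lemma~\ref{homogeneitylemma}(3) provides.
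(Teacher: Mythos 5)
Your proposal is correct and follows essentially the same route as the paper's proof: fix $A \supseteq ker(w) \cup ker(u)$ over which homogeneity holds (Lemma \ref{homogeneitylemma}(3)), take a tail $B$ of $A$, use homogeneity to move each witness's kernel into $A \cup B$, and conclude via Proposition \ref{weakcollection}. The only cosmetic point is that applying $\pi$ to $\varphi(x,y,u)$ also uses $\pi x = x$, which holds since $ker(x) \subseteq ker(w) \subseteq A$; the paper leaves this equally implicit.
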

\begin{proof}
Assume that every set of urelements has a tail and suppose that $\forall x \in w \exists y \varphi (x, y, u)$. There is an $A \subseteq \A$ such that $ker(w) \cup ker(u) \subseteq A$ and homogeneity holds over $A$. Let $B$ be a tail of $A$. For every $x \in w$ and $y'$ such that $\varphi(x, y', u)$, $ker(y') - A$ can be mapped injectively into $B$. So by homogeneity over $A$, there is an automorphism that moves $ker(y')$ into $A \cup B$ while point-wise fixing $A$. Therefore, every $x \in w$ has some $y \in V(A \cup B)$ such that $\varphi (x, y, u)$ and hence Collection holds by Proposition \ref{weakcollection}.
\end{proof}

The next two lemmas demonstrate how the DC$_\kappa$-scheme is related to urelements.
\begin{lemma}[$\ZFCUR$]\label{lemma:DCK->KRealized}
Let $\kappa$ be an infinite cardinal. Assume that the DC$_\kappa$-scheme holds and $\A$ is a proper class. Then $\kappa$ is realized.
\end{lemma}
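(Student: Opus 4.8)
The plan is to feed the DC$_\kappa$-scheme a relation which, given any injective sequence of urelements enumerated so far, demands a fresh urelement. The hypothesis that $\A$ is a proper class is exactly what makes this relation have no terminal nodes, and the sequence threading it will be an injection of $\kappa$ into $\A$, whose range realizes $\kappa$.

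Concretely, I would apply the reformulation of the DC$_\kappa$-scheme recorded just above (from \cite[Proposition 13]{yao2023set}) to the definable class $X = \A$. Take $\varphi(s,y)$ to be the formula $y \notin \textup{ran}(s)$; no parameter is needed, so $u$ may be taken to be $\emptyset$. To verify the antecedent: given any $s \in \A^{<\kappa}$, the range $\textup{ran}(s)$ is a set of urelements, and since $\A$ is a proper class we have $\A \setminus \textup{ran}(s) \neq \emptyset$, so there is some $y \in \A$ with $\varphi(s,y)$. The scheme therefore yields a function $f : \kappa \to \A$ with $f(\alpha) \notin \textup{ran}(f\restriction\alpha)$ for every $\alpha < \kappa$. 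This says precisely that $f$ is injective: if $\beta < \alpha < \kappa$ then $f(\beta) \in \textup{ran}(f\restriction\alpha)$, hence $f(\alpha) \neq f(\beta)$. By Replacement, $\textup{ran}(f)$ is a set, it consists of urelements, and $f$ witnesses $\textup{ran}(f) \sim \kappa$; so $\kappa$ is realized.

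There is no genuine obstacle in this argument; the only things to get right are the choice of relation, so that ``$\A$ is a proper class'' translates into ``the relation has no terminal nodes'', and the trivial observation that a sequence avoiding all of its earlier values is injective. If one prefers to invoke the original form of the DC$_\kappa$-scheme (quantifying over all objects $x$ rather than over $X=\A$), one instead lets $\varphi(x,y)$ assert ``if $x$ is an injective function from an ordinal into $\A$, then $y \in \A \setminus \textup{ran}(x)$'' and makes $\varphi$ vacuously true otherwise; the antecedent still holds because $\A$ is a proper class, and one then shows by induction on $\alpha \le \kappa$ that $f\restriction\alpha$ is an injective function from $\alpha$ into $\A$, the successor step using the instance $\varphi(f\restriction\beta, f(\beta))$ together with the inductive hypothesis that $f\restriction\beta$ is a genuine injective sequence of urelements.
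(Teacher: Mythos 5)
Your proof is correct and follows essentially the same route as the paper: both apply the DC$_\kappa$-scheme to a relation that demands a fresh urelement at each stage (which is possible exactly because $\A$ is a proper class) and then read off an injection of $\kappa$ into a set of urelements. The only difference is bookkeeping: the paper threads the relation $ker(x) \subsetneq ker(y)$ on arbitrary objects and afterwards extracts the injection from $ker(f)$ by well-ordering it and picking least new elements, whereas you restrict the scheme to the class $\A$ (via the cited reformulation, or your conditional-formula variant), so the threading function is itself the injection --- a slight streamlining rather than a different argument.
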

\begin{proof}
Since $\A$ is a proper class, for every $x$, there is some $y$ with $ker(x) \subsetneq ker(y)$. It follows from the DC$_\kappa$-scheme that there exists a $\kappa$-sequence $f$ with $ker(f\restriction \alpha) \subsetneq ker(f(\alpha))$ for every $\alpha < \kappa$. Fix a well-ordering $\prec$ of $ker(f)$. Then the map $$\alpha \mapsto \text{the}\prec\text{-least element of }ker(f(\alpha +1)) - ker(f\restriction(\alpha + 1))$$ is an injection from $\kappa$ to $ker(f)$. Therefore, $\kappa$ is realized.
\end{proof}

\begin{lemma}[$\ZFCUR$]\label{Tailkappa->DCkappa}
Let $\kappa$ be an infinite cardinal and suppose that every set of urelements has a tail of size at least $\kappa$. Then the DC$_\kappa$-scheme holds.
\end{lemma}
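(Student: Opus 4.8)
The plan is to mimic the proof of Theorem~\ref{Plenitude->DCS}, using the tail hypothesis wherever that proof uses Plenitude. As there, it suffices to treat regular $\kappa$: if $\kappa$ is singular, then every set of urelements has a tail of size at least $\kappa'$ for each infinite cardinal $\kappa' \le \kappa$, so one concludes by the usual argument (cf.\ \cite[Theorem 8.1]{jech2008axiom}). So fix a regular $\kappa$ and a formula $\varphi(x,y,u)$ with parameter $u$ satisfying $\forall x\,\exists y\,\varphi(x,y,u)$. It is enough to produce a set $\barx$ with $\barx^{<\kappa}\subseteq\barx$ and $\forall x\in\barx\,\exists y\in\barx\,\varphi(x,y,u)$: applying the set principle DC$_\kappa$ (a consequence of AC) to $\barx$ and the relation $\{\langle s,y\rangle\in\barx^{<\kappa}\times\barx : \varphi(s,y,u)\}$ — which has no terminal nodes since $\barx^{<\kappa}\subseteq\barx$ and $\barx$ is closed under $\varphi$ — then yields a function $f$ on $\kappa$ with $\varphi(f\restriction\alpha,f(\alpha),u)$ for every $\alpha<\kappa$, as required.

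Using Lemma \ref{homogeneitylemma}(3), fix $A^*\supseteq ker(u)$ over which homogeneity holds; by Lemma \ref{homogeneitylemma}(2), homogeneity then holds over every set of urelements extending $A^*$. The key step is the tail analogue of Lemma \ref{phiextension}: every pair of ordinals $\langle\mu,\alpha\rangle$ has a $(\varphi,A^*)$-extension. Indeed, given $\langle\mu,\alpha\rangle$, pick any $A\supseteq A^*$ with $|A\setminus A^*|=\mu$, let $B$ be a tail of $A$, and set $\mu'=|B|$; this is at least $\kappa$, and by homogeneity over $A^*$ carrying $A$ to any other such set (and tails to tails) it does not depend on the choice of $A$. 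For each $x\in V_\alpha(A)$ fix (by AC) a $y$ with $\varphi(x,y,u)$; then $ker(y)\setminus A$ is disjoint from $A$, hence injects into the tail $B$, so by homogeneity over $A$ there is an automorphism $\pi$ fixing $A$ pointwise with $\pi(ker(y)\setminus A)\subseteq B$. Since $ker(x)\subseteq A$ and $ker(u)\subseteq A^*\subseteq A$, $\pi$ fixes $x$ and $u$, so $\varphi(x,\pi y,u)$ holds and $ker(\pi y)=\pi[ker(y)]\subseteq A\cup B$, i.e.\ $\pi y\in V(A\cup B)$. As $V_\alpha(A)$ is a set, some $\alpha'\ge\alpha$ is large enough that every $x\in V_\alpha(A)$ has a witness in $V_{\alpha'}(A\cup B)$, and then $\langle\mu',\alpha'\rangle$ (taking $\alpha'$ least) is a $(\varphi,A^*)$-extension of $\langle\mu,\alpha\rangle$, the case of an arbitrary $A''$ with $|A''\setminus A^*|=\mu$ being obtained from the case of $A$ by an automorphism fixing $A^*$ pointwise.

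Granting this, one builds $\barx$ as in the proof of Theorem \ref{Plenitude->DCS}: recursively define pairs $\langle\langle\lambda_\alpha,\gamma_\alpha\rangle:\alpha<\kappa\rangle$, with $\langle\lambda_{\alpha+1},\gamma_{\alpha+1}\rangle$ the lexicographically least $(\varphi,A^*)$-extension of $\langle\lambda_\alpha,\gamma_\alpha\rangle$ having $\cf(\gamma_{\alpha+1})\ge\kappa$, and unions at limits; together with an increasing sequence $\langle A_\alpha:\alpha<\kappa\rangle$ of sets of urelements with $A_0=A^*$, with $A_{\alpha+1}$ obtained from $A_\alpha$ by adjoining a tail of $A_\alpha$, and with $A_\delta=\bigcup_{\alpha<\delta}A_\alpha$ at limits. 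Put $\barx=\bigcup_{\alpha<\kappa}V_{\gamma_\alpha}(A_\alpha)$. That $\barx^{<\kappa}\subseteq\barx$ follows from the regularity of $\kappa$ and $\cf(\gamma_\alpha)\ge\kappa$, exactly as in Theorem \ref{Plenitude->DCS}; that $\barx$ is closed under $\varphi$ follows because, for $x\in V_{\gamma_\alpha}(A_\alpha)$, a witness $y$ can be moved by an automorphism fixing $A_\alpha$ (hence fixing $x$ and $u$) into $V(A_{\alpha+1})$, since $ker(y)\setminus A_\alpha$ injects into the tail of $A_\alpha$ adjoined at stage $\alpha+1$ — this is precisely the computation of the previous paragraph.

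The main obstacle is the bookkeeping in this last construction rather than any new idea. Under Plenitude one always has arbitrarily large sets of urelements disjoint from a given one; here one has only tails, so one must check that the number of urelements the recursion demands at each stage never outruns the relevant tail cardinal — this is exactly where the clause ``of size at least $\kappa$'' is used, together with the observation that the tail cardinals of the $A_\alpha$ form a non-increasing (hence eventually constant) sequence bounded below by $\kappa$, so that a tail of $A_\alpha$ always supplies the fresh urelements required. One must also verify, just as in Theorem \ref{Plenitude->DCS}, that the sequences $\langle A_\alpha\rangle$ and $\langle\gamma_\alpha\rangle$ genuinely exist as sets, so that $\barx$ is a set to which DC$_\kappa$ may be applied.
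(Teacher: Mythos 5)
Your core computation (using homogeneity together with the tail hypothesis to move a witness $y$ for $x \in V_{\gamma_\alpha}(A_\alpha)$ into $V(A_{\alpha+1})$ while fixing $x$ and $u$) is exactly the engine of the paper's proof, but the way you organize the construction has a genuine gap: the recursion that builds $\langle A_\alpha : \alpha < \kappa\rangle$ by ``adjoining a tail of $A_\alpha$'' at each successor step is not a legitimate recursion in $\ZFCUR$. A tail of $A_\alpha$ is not canonically determined---when $\A$ is a proper class (the only interesting case), any set of urelements disjoint from $A_\alpha$ of the tail cardinality is a tail, so the tails of $A_\alpha$ form a proper class with no definable uniform selection---and $\ZFCUR$ has only Replacement and set-AC, so a transfinite recursion needs its successor step to be given by a definable class \emph{function}. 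Making $\kappa$-many such dependent, non-canonical choices is essentially an instance of the very DC$_\kappa$-scheme you are trying to prove, so the argument is circular at exactly this point. You flag the issue in your last paragraph (``one must also verify \dots{} that the sequences $\langle A_\alpha\rangle$ and $\langle\gamma_\alpha\rangle$ genuinely exist as sets''), but the remedy you offer---that the tail cardinals are non-increasing and eventually constant---addresses only cardinality bookkeeping, not the absence of a definable choice of tails. (By contrast, in the Plenitude case this step is unproblematic: one realizes a single sufficiently large cardinal by one set of urelements and carves the $A_\alpha$'s out of it canonically via a well-ordering, which is one application of set-AC.)

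The repair is to fix all the urelement sets \emph{in advance} by a single set-sized choice, which is what the paper does: take one $A \supseteq \ker(u)$ over which homogeneity holds, one tail $B$ of $A$ (of size at least $\kappa$), and use AC once to partition $B$ into $\kappa$ pieces $\{B_\eta : \eta<\kappa\}$ with each $B_\eta \sim B$; since any set disjoint from $A\cup\bigcup_{\eta<\alpha}B_\eta$ is disjoint from $A$ and hence injects into $B \sim B_\alpha$, each $B_\alpha$ is automatically a tail of $A\cup\bigcup_{\eta<\alpha}B_\eta$. With the sets $A_\alpha := A\cup\bigcup_{\eta\le\alpha}B_\eta$ thus fixed, the only recursion needed is on the ordinals $\gamma_\alpha$, taken \emph{least} with $\operatorname{cf}(\gamma_\alpha)=\kappa$ and with witnesses for all $x$ in the earlier stages appearing in $V_{\gamma_\alpha}(A_\alpha)$; since ``least such ordinal'' is canonical, this recursion is justified by Replacement alone, and then $\barx = \bigcup_{\alpha<\kappa}V_{\gamma_\alpha}(A_\alpha)$ is closed under $\varphi$ and under ${<}\kappa$-sequences, and set DC$_\kappa$ finishes the proof exactly as you describe. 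Note that once the $A_\alpha$'s are fixed this way, the $(\varphi,E)$-extension machinery with the cardinals $\lambda_\alpha$ becomes unnecessary (and in your version it also sits uneasily with the recursion, since $A_{\alpha+1}$ extends $A^*$ by a whole tail rather than by exactly $\lambda_{\alpha+1}$-many urelements, whereas the extension property was only established for sets extending $A^*$ by exactly the prescribed number).
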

\begin{proof}
Again, we may assume $\kappa$ is regular. Suppose that $\forall x \exists y \varphi(x, y, u)$ with some parameter $u$. Let $A$ be a set of urelements extending $ker(u)$ over which homogeneity holds and $B$ be a tail of $A$. Since $B$ has size at least $\kappa$, $B$ can be partitioned into $\kappa$-many pieces $\{B_\eta : \eta < \kappa\}$, where $B_\eta \sim B$ for each $\eta$. We define a $\kappa$-sequence of ordinals $\<\gamma_\alpha : \alpha < \kappa>$ by recursion, where $\gamma_\alpha$ is the least ordinal such that 
\begin{itemize}
    \item [] (i) $\gamma_\alpha > \bigcup_{\eta < \alpha} \gamma_\eta$ and cf$(\gamma_\alpha) = \kappa$;
    \item [] (ii) for every $x$ in $ \bigcup_{\eta < \alpha} V_{\gamma_\eta}(\bigcup_{\eta < \alpha}B_\eta \cup A)$, there is a $y \in V_{\gamma_\alpha}(\bigcup_{\eta \leq \alpha} B_\eta \cup A)$ with $\varphi(x, y, u)$.
\end{itemize}
Such $\gamma_\alpha$ exists, because homogeneity holds over $\bigcup_{\eta < \alpha}B_\eta \cup A$ by Lemma \ref{homogeneitylemma} (2) and each $B_\eta$ is a tail of $A$, which allows us to find a sufficiently large $\gamma_\alpha$. Let $\barx = \bigcup_{\alpha < \kappa} V_{\gamma_\alpha}(\bigcup_{\eta \leq \alpha}B_\eta \cup A)$. It follows that $\barx$ is closed under the $\varphi$-relation and $\barx^{<\kappa} \subseteq \barx$.
We can then apply DC$_\kappa$ to $\barx$ to get a desired $\kappa$-sequence.\end{proof}

\begin{theorem}[$\ZFCUR$]\label{easyimplication}
\
\begin{enumerate}
    \item $\A$ is a set $\rightarrow$ DC$_{<Ord}$. 
    \item DC$_{<Ord}$ $\rightarrow$ Collection.
    \item RP$^-$ $\rightarrow$ Collection.
    \item Collection $\rightarrow$ Closure.
    \item Collection $\rightarrow$ (Plenitude $\lor$ Tail).
    \item Collection $\rightarrow$ DC$_\omega$-scheme.
    \item Collection $\rightarrow$ RP.
\end{enumerate}
Hence, the implication diagram \ref{ZFCUdiagram} holds over $\ZFCUR$.
\end{theorem}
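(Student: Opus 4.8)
The plan is to prove (1)--(7) one at a time; the remaining edges of Figure~\ref{ZFCUdiagram} then follow, being either recorded in the earlier results (Theorems~\ref{Plenitude->DCS}, \ref{Plenitude->Collection}, \ref{tail->collection}, \ref{collection+dc->rp}) or trivial: $\A$ is a set $\to$ Tail since $\A-A$ is a tail of $A$, while DC$_{<Ord}\to$ DC$_\kappa$-scheme $\to$ DC$_{\omega_1}$-scheme $\to$ DC$_\omega$-scheme and RP $\to$ RP$^-$ hold by restricting a threading sequence, resp.\ a reflecting transitive set. For (1), note that when $\A$ is a set the single hierarchy $\langle V_\alpha(\A)\rangle_{\alpha\in Ord}$ exhausts $U$. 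Given a regular $\kappa$ and $\forall x\,\exists y\,\varphi(x,y,u)$, Replacement bounds, on each $V_\beta(\A)$, the ranks of the least-rank witnesses; closing off $\kappa^+$ times produces a cardinal $\lambda$ of cofinality $\kappa^+$ with $V_\lambda(\A)^{<\kappa}\subseteq V_\lambda(\A)$ and such that every $s\in V_\lambda(\A)$ has a $\varphi$-witness in $V_\lambda(\A)$. Since $V_\lambda(\A)$ is well-orderable (AC), a length-$\kappa$ recursion picking the $\triangleleft$-least witness at each step yields $f\colon\kappa\to V_\lambda(\A)$ with $\varphi(f\restriction\alpha,f(\alpha),u)$ for all $\alpha<\kappa$; singular $\kappa$ follow by the usual argument (cf.~\cite[Thm.~8.1]{jech2008axiom}), so DC$_{<Ord}$ holds.

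For (2), (4), (5) the uniform idea is to use Collection (or DC$_{<Ord}$) to manufacture one large enough set of urelements. In (2), if $\A$ is a set, Collection is Proposition~\ref{weakcollection}; otherwise $\A$ is a proper class, and Lemma~\ref{lemma:DCK->KRealized} applied to each infinite $\kappa$ shows every infinite cardinal --- hence every ordinal --- is realized, i.e.\ Plenitude holds, so Theorem~\ref{Plenitude->Collection}(2) gives Collection. In (4), for a set $S$ of realized ordinals one applies Collection over $w=S$ to ``$A\subseteq\A\wedge A\sim\alpha$'' to obtain a set $v$ holding a realizing set for every $\alpha\in S$; the union of the members of $v$ that are sets of urelements is then large enough to realize $\sup S$. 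In (5), if Plenitude fails let $\kappa$ be the least non-realized cardinal, so (AC) every set of urelements has size $<\kappa$; given $A\subseteq\A$, the cardinalities of sets of urelements disjoint from $A$ form a set $S\subseteq\kappa$, and Collection applied over $w=S$ to ``$C\subseteq\A\wedge C\cap A=\emptyset\wedge|C|=\mu$'' yields a set $v$ whose relevant members have union a set $B\subseteq\A$, disjoint from $A$, with $|C|\le|B|$ for every such $C$ --- a tail of $A$. Hence Collection $\to$ (Plenitude $\vee$ Tail).

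For (6), assume Collection and $\forall x\,\exists y\,\varphi(x,y,u)$, and fix (Lemma~\ref{homogeneitylemma}) a set $A\supseteq\ker(u)$ of urelements over which homogeneity holds. Build an increasing $\omega$-chain $\barx_0\subseteq\barx_1\subseteq\cdots$ with $\barx_0=V_{\gamma_0}(A)$ ($\gamma_0$ a limit) and with $\barx_{n+1}$ a $V_{\gamma_{n+1}}(A_{n+1})$ ($\gamma_{n+1}$ a limit, $A_{n+1}\supseteq A\cup\ker(v_n)$) large enough to contain $\barx_n\cup v_n$, where $v_n$ is a set furnished by Collection with $\forall s\in\barx_n\,\exists y\in v_n\,\varphi(s,y,u)$. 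Then $\barx:=\bigcup_n\barx_n$ is closed under finite sequences and under the $\varphi$-relation, and, being well-orderable, a recursion along $\omega$ choosing $\triangleleft$-least witnesses gives a sequence witnessing the DC$_\omega$-scheme. Then (7) is immediate: Collection yields the DC$_\omega$-scheme by (6), whence RP by Theorem~\ref{collection+dc->rp}.

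The remaining implication (3), RP$^-\to$ Collection, is the one I expect to be the main obstacle. If $\A$ is a set, Proposition~\ref{weakcollection} again suffices, so suppose $\A$ is a proper class. The strategy is to feed true $\Sigma_1$-type witnessing statements --- the hypothesis ``$\forall x\in w\,\exists y\,\varphi(x,y,u)$'' and assertions of the form ``there is a set of urelements of size $\mu$ disjoint from $A$'' --- into RP$^-$, obtaining transitive sets $t$ in which these existentials are realized, and then to re-collect in $V$, via Replacement (after fixing canonical reflecting sets), the urelements occurring in those $t$. This rules out degenerate urelement configurations (so that sets of urelements of all needed cardinalities exist, and Closure holds) and produces a single set of urelements against which the desired witnesses can be found, so that Proposition~\ref{weakcollection} --- or Theorems~\ref{Plenitude->Collection} and \ref{tail->collection} --- yields Collection. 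The points requiring care are the canonical choice of reflecting sets (for Replacement to apply) and the removal of the relativizations $\varphi^t$; the latter is unproblematic precisely because the statements fed to RP$^-$ are $\Sigma_1$ over their parameters, hence upward absolute. With (1)--(7) in hand, every edge of Figure~\ref{ZFCUdiagram} is accounted for, so the diagram holds over $\ZFCUR$.
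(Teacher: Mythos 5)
Parts (1), (2), (4), (5) and (7) of your proposal are correct and essentially coincide with the paper's arguments (your (1) closes off $\kappa^+$ times instead of using cofinality-$\kappa$ stages, which is fine). The genuine problems are in (6) and (3).

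For (6), your construction of the increasing $\omega$-chain $\barx_0\subseteq\barx_1\subseteq\cdots$ is not a legitimate recursion in $\ZFCUR$: at each step Collection only asserts the \emph{existence} of a collecting set $v_n$ (and hence of some $A_{n+1}\supseteq A\cup ker(v_n)$), and there is no definable choice of $v_n$ --- with a proper class of urelements there is no canonical hierarchy or global well-ordering from which to select one. An $\omega$-length recursion that makes a non-canonical existential choice at every step is itself an instance of the DC$_\omega$-scheme for classes, the very principle you are trying to derive, so the argument is circular; this is precisely why Collection $\rightarrow$ DC$_\omega$-scheme is a non-trivial theorem. The paper avoids the issue by reducing (via (1) and Theorem \ref{Plenitude->DCS}) to the case where $\A$ is a proper class and Plenitude fails, using (5) to conclude that every set of urelements has a tail, observing that such tails are infinite, and then invoking Lemma \ref{Tailkappa->DCkappa}: there the urelements needed at every stage are fixed \emph{in advance} (an infinite tail of $A$ partitioned into $\omega$ pieces, each piece again a tail), so the recursion only selects least ordinals $\gamma_n$ and is fully definable.

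For (3) you offer only a strategy, and as you yourself flag, it hinges on a ``canonical choice of reflecting sets'' --- but RP$^-$ provides no canonical reflecting set, so re-collecting via Replacement the urelements occurring in infinitely many reflecting sets runs into the same non-canonicity obstruction as above. The paper's proof needs none of this: assuming Plenitude fails (otherwise Theorem \ref{Plenitude->Collection} already gives Collection), fix $A\subseteq\A$, let $\kappa$ be the least cardinal not realized by any set of urelements disjoint from $A$, and apply RP$^-$ \emph{once} to the single true statement $\forall\lambda<\kappa\,\exists B\,(B\sim\lambda\wedge B\cap A=\emptyset)$ with parameters $\kappa$ and $A$; if $t$ is the resulting transitive set, then $C=\bigcup\{B\in t: B\subseteq\A\wedge B\cap A=\emptyset\}$ is a tail of $A$, and Theorem \ref{tail->collection} yields Collection. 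So the right move is to aim directly at Tail with a single application of RP$^-$ to one bounded-universally-quantified statement, rather than gluing together many $\Sigma_1$ reflections with Replacement.
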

\begin{proof}

(1) Assume $\A$ is a set and $\forall x \exists y \varphi (x, y, u)$. Fix some regular $\kappa$. Define $\langle \gamma_\alpha : \alpha < \kappa \rangle$ by recursion by letting $\gamma_\alpha$ be the least ordinal such that cf$(\alpha) = \kappa$ and $\forall x \in \bigcup_{\eta <  \alpha} V_{\gamma_\eta} (\A) \exists y \in V_{\gamma_\alpha}(\A) \varphi (x, y, u)$. Let $\bar{x} = \bigcup_{\alpha < \kappa} V_{\gamma_\alpha}(\A)$. As before, we can then apply DC$_\kappa$ to $\barx$ to get a desired $\kappa$-sequence.

(2) By Lemma \ref{lemma:DCK->KRealized}, DC$_{<Ord}$ implies that either Plenitude holds or $\A$ is a set. But (Plenitude $\lor$ $\A$ is a set) implies Collection by Proposition \ref{weakcollection} and Theorem \ref{Plenitude->Collection}, so DC$_{<Ord}$ implies Collection.

(3) Suppose that RP$^-$ holds. Again, by Theorem \ref{Plenitude->Collection} we may assume that Plenitude fails. Now we show that Tail holds, which suffices by Theorem \ref{tail->collection}. So fix an $A \subseteq \A$ and let $\kappa$ be the least cardinal that is not realized by any set of urelements disjoint from $A$. $\kappa$ exists because Plenitude fails. Then by RP$^-$ there is a transitive set $t$ extending $\{\kappa, A\}$ such that $t$ reflects $\forall \lambda < \kappa \exists B (B \sim \lambda \land B \cap A = \emptyset)$. It is not hard to check $C = \bigcup \{B \in t : B \subseteq \A \land B \cap A = \emptyset\}$ is a tail of $A$.

Now assume Collection. 

(4) Let $x$ be a set of realized cardinals. Then there is a set $y$ such that for every $\kappa \in x$, there is some $A \in y$ such that $A \sim \kappa$. Let $B = \bigcup \{A: A \in y\}$. Then the cardinality of $B$ is at least the supremum of $x$ and hence Closure holds. 

(5) Suppose that Plenitude fails. Given a set $A$ of urelements, let $w$ be the set of cardinals realized by some $B \subseteq \A$ disjoint from $A$. By Collection there is some set $v$ such that for every $\lambda \in w$, there is some $B \in v$ such that $B \sim \lambda$ and $B \cap A = \emptyset$. $C = \bigcup \{B \in v: B \subseteq \A \land B \cap A = \emptyset\}$ is a tail of $A$. 

(6) To show the DC$_{\omega}$-scheme holds, we may assume that Plenitude fails by Theorem \ref{Plenitude->DCS} and that $\A$ is a proper class by (1). Then by Collection and (5), every set of urelements must have an infinite tail, so the DC$_{\omega}$-scheme holds by Lemma \ref{Tailkappa->DCkappa}.

(7) RP holds by (6) and Theorem \ref{collection+dc->rp}.
\end{proof}
\noindent As a consequence, Collection and Reflection can be characterized in terms of urelements in $\ZFCUR$.
\begin{corollary}[$\ZFCUR$]\label{urcharacterization}
The following are equivalent.
\begin{enumerate}
    \item RP
    \item RP$^-$
    \item Collection
    \item Plenitude $\lor$ Tail
\end{enumerate}
\end{corollary}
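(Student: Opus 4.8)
The plan is to verify the four-way equivalence by assembling implications already established in Theorem~\ref{easyimplication} and the preceding results, and then closing the one remaining gap. The implications $(1)\Rightarrow(2)$ is trivial since RP is literally stronger than RP$^-$ (RP gives a transitive $t$ reflecting $\varphi$ in both directions, which in particular yields the one-directional RP$^-$). For $(2)\Rightarrow(3)$ we invoke Theorem~\ref{easyimplication}(3), RP$^-\to$ Collection. For $(3)\Rightarrow(1)$ we invoke Theorem~\ref{easyimplication}(7), Collection $\to$ RP. So $(1)$, $(2)$, $(3)$ are already provably equivalent.

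It remains to fold in $(4)$, Plenitude $\lor$ Tail. First, $(3)\Rightarrow(4)$ is exactly Theorem~\ref{easyimplication}(5), Collection $\to$ (Plenitude $\lor$ Tail). For the converse $(4)\Rightarrow(3)$, argue by cases: if Plenitude holds, then Collection follows from Theorem~\ref{Plenitude->Collection}(2) (indeed Plenitude $\to$ Closure $\land$ Duplication $\land$ Collection); if Tail holds, then Collection follows from Theorem~\ref{tail->collection}. Either way Collection holds, so $(4)\Rightarrow(3)$. Chaining these gives the full cycle $(1)\Leftrightarrow(2)\Leftrightarrow(3)\Leftrightarrow(4)$.

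I expect this proof to be essentially bookkeeping: every arrow needed is a named theorem in the excerpt, so the ``proof'' is just citing Theorem~\ref{easyimplication}(3),(5),(7), Theorem~\ref{Plenitude->Collection}(2), Theorem~\ref{tail->collection}, and noting the trivial RP $\to$ RP$^-$. The only place requiring a sentence of genuine (if routine) argument is the disjunction elimination in $(4)\Rightarrow(3)$, and even there the two cases are immediate applications of existing theorems. No step is a real obstacle; the main thing to be careful about is making sure the cycle of implications actually connects all four items — concretely, that $(4)$ is not left dangling, which is handled by the pair $(3)\Rightarrow(4)$ and $(4)\Rightarrow(3)$ above.
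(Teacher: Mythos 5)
Your proposal is correct and follows essentially the same route as the paper: the paper also just cites the trivial RP $\to$ RP$^-$, Theorem~\ref{easyimplication} for RP$^-\to$ Collection and Collection $\to$ (Plenitude $\lor$ Tail), and Theorems~\ref{Plenitude->Collection}, \ref{tail->collection}, and \ref{easyimplication}(7) to close the loop. Arranging it as a single cycle $(1)\to(2)\to(3)\to(4)\to(1)$ versus your $(1)\leftrightarrow(2)\leftrightarrow(3)$ plus $(3)\leftrightarrow(4)$ is an immaterial difference.
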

\begin{proof}
(1) $\to$ (2) is immediate. (2) $\to$ (3) and (3) $\to$ (4) are proved in Theorem \ref{easyimplication}. (4) $\rightarrow$ (1) by Theorem \ref{Plenitude->Collection} and \ref{tail->collection} and \ref{easyimplication} (7). \end{proof}
Many of the implications in Diagram \ref{ZFCUdiagram} no longer hold without AC. For example, $\ZFUR + $ Plenitude cannot prove either Duplication or Collection, as shown in \cite[Theorem 36]{yao2023set}. Meanwhile, many questions regarding $\ZFUR$ are open.
\begin{question}
Does $\ZFUR$ prove any of the following?
\begin{enumerate}
    \item Tail $\rightarrow$ Collection.
    \item Collection $\rightarrow$ RP$^-$.
    \item RP$^- \rightarrow$ Collection.
\end{enumerate}
\end{question}

\subsection{Completeness of the diagram}\label{section2.5}
We shall prove the completeness of Diagram \ref{ZFCUdiagram} by an easy method of building inner models of $\ZFCUR$, which is implicitly used in \cite{levy1969definability} and \cite{felgner1976choice}.

\begin{definition}\label{normalideal}
A \textit{class} $\I$ of sets of urelements is an $\A$-\textit{ideal} if 
\begin{enumerate}
    \item $\A \notin \I$ (if $\A$ is a set);
    \item if $A, B \in \I$, then $A \cup B \in \I$; 
    \item if $A \in \I$ and $B \subseteq A$, then $B \in \I$;
    \item for every $a \in \A$, $\{a\} \in \I$.
\end{enumerate}
Given an $\A$-ideal $\I$, $U^\I = \{x \in U : ker(x) \in \I\}$, i.e., the class of objects whose kernel is in $\I$.
\end{definition}
 
 As in ZF, classes are always understood as definable classes. When $X$ is a class and $\pi$ is a permutation of $\A$, $\pi X$ denotes the class $\{\pi x \mid x \in X\}$.

\begin{lemma}\label{idealpermutation}
Let $\I$ be an $\A$-ideal. Then for every $a, A$ such that $a \in A \in \I$, there is a permutation $\pi$ of $\A$ such that (i) $\pi \I = \I$, (ii) $\pi a \neq a$ and (iii) $\pi$ point-wise fixes $A - \{a\}$.
\end{lemma}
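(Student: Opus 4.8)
The plan is to construct $\pi$ explicitly as a transposition-like permutation of $\A$, and then verify that it preserves $\I$ using the ideal axioms, especially the fact that $\I$ is closed under subsets and contains all singletons. The key point is that we need a target urelement $b \neq a$ to swap $a$ with, and we must ensure that this swap does not move any set out of $\I$ or into $\I$.

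First I would split into cases according to whether $\A$ is a set. If $\A$ is a proper class, then since $A \in \I$ is a set, $\A - A$ is nonempty; pick any $b \in \A - A$. If $\A$ is a set, then because $\A \notin \I$ but $A \in \I$, we have $A \neq \A$, so again there is some $b \in \A - A$. In either case, let $\pi$ be the transposition swapping $a$ and $b$ and fixing every other urelement (extended to an automorphism of $U$ as described in the preliminaries). Then (ii) holds since $\pi a = b \neq a$, and (iii) holds since $\pi$ fixes every urelement other than $a$ and $b$, and $b \notin A$, so $\pi$ point-wise fixes $A - \{a\}$ (indeed all of $A \setminus \{a\}$, and even fixes nothing is lost here).

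It remains to check (i), that $\pi \I = \I$. Since $\pi$ is an involution, it suffices to show $\pi C \in \I$ whenever $C \in \I$. Given $C \in \I$, note $\pi C$ differs from $C$ only in that $a$ and $b$ may be swapped: more precisely $\pi C \subseteq C \cup \{a, b\} \subseteq C \cup \{a\} \cup \{b\}$. By ideal axiom (4), $\{a\}, \{b\} \in \I$, and $C \in \I$, so by axiom (2) (closure under finite unions) $C \cup \{a\} \cup \{b\} \in \I$; then by axiom (3) (closure under subsets) $\pi C \in \I$. This gives $\pi \I \subseteq \I$, and applying the same to $\pi^{-1} = \pi$ yields the reverse inclusion, so $\pi \I = \I$.

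The main obstacle is simply making sure a suitable $b$ exists in both cases — this is exactly where ideal axiom (1) ($\A \notin \I$ when $\A$ is a set) is needed, ruling out the degenerate situation where $A$ exhausts all urelements. Everything else is a routine application of the closure properties of an $\A$-ideal. One small subtlety to state carefully: $\pi C$ as a \emph{set} of urelements is literally $\{\pi c : c \in C\}$, which is contained in $(C \setminus \{a\}) \cup \{b\}$ if $a \in C$ and equals $C$ if $a \notin C$; in all cases $\pi C \subseteq C \cup \{b\}$, which already suffices together with $\{b\} \in \I$.
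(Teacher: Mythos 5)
Your proof is correct and takes essentially the same route as the paper: fix some $b \in \A - A$ (the paper's $a'$), let $\pi$ be the transposition of $a$ and $b$, and verify $\pi\I = \I$ from the ideal axioms; you are in fact slightly more explicit than the paper about why such a $b$ exists (using axiom (1) when $\A$ is a set). One cosmetic slip: your closing remark that ``in all cases $\pi C \subseteq C \cup \{b\}$'' (and that $\pi C = C$ when $a \notin C$) fails when $b \in C$ and $a \notin C$, but this is harmless since your main argument already uses the correct bound $\pi C \subseteq C \cup \{a, b\}$.
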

\begin{proof}
Fix some $a' \in \A - A$. Let $\pi$ be a permutation that only swaps $a$ and $a'$. To see that $\pi \I = \I$, let $B \in \I$. We may assume $a \in B$ and $a' \notin B$. Then $\pi B = (B - \{a\}) \cup \{a'\}$, which is in $\I$. Also, $B = \pi ((B - \{a\}) \cup \{a'\})$. Therefore, $\pi \I = \I$.
\end{proof}

\begin{theorem}[$\ZFCUR$]\label{smallkernelmodel}
Let $\I$ be an $\A$-ideal. Then $U^\I \models \ZFCUR$ + ``$\A$ is a proper class''.
\end{theorem}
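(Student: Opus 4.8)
The plan is to verify each axiom of $\ZFCUR$ (plus ``$\A$ is a proper class'') inside $U^\I$, treating $U^\I$ as a transitive, $\in$-closed-enough subclass of $U$. The first step is to record the basic closure properties of $U^\I$: it is transitive (if $x \in U^\I$ and $y \in x$, then $ker(y) \subseteq ker(x) \in \I$, so $ker(y) \in \I$ by ideal-closure under subsets), it contains all urelements (by clause (4) of Definition \ref{normalideal}, applied to singletons, together with the convention $ker(a) = \{a\}$), and it contains all pure sets (their kernels are $\emptyset \in \I$, since $\emptyset \subseteq \{a\}$ for any $a$). Consequently $Ord \subseteq U^\I$ and, since $\A$ is a proper class in the ambient model or else $\A$ is a set not in $\I$, in either case $\A \cap U^\I = \A$ is a proper class from the point of view of $U^\I$ — this gives the ``$\A$ is a proper class'' clause, and also shows $\A$ is never an element of $U^\I$.

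Next I would check the ``easy'' axioms, where closure of $\I$ under finite unions and subsets does the work. Extensionality, Foundation, and Axiom $\A$ are inherited because $U^\I$ is transitive and $\in$ is absolute. For Pairing: $ker(\{x,y\}) = ker(x) \cup ker(y) \in \I$ by clause (2). For Union: $ker(\bigcup x) \subseteq ker(x) \in \I$. For Powerset: the key point is that $P(x) \cap U^\I = \{z : Set(z) \land z \subseteq x \land ker(z) \in \I\}$, and since every such $z$ has $ker(z) \subseteq ker(x) \in \I$, the condition $ker(z)\in\I$ is automatic; so $P(x)^{U^\I} = P(x)$, which lies in $U^\I$ because $ker(P(x)) = ker(x)$. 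Infinity holds since $\omega \in V \subseteq U^\I$. Separation is immediate: a subset of $x \in U^\I$ has kernel inside $ker(x) \in \I$. For AC: given $x \in U^\I$, a well-ordering of $x$ (a pure relation coded with elements of $x$) has kernel $\subseteq ker(x)$, hence lies in $U^\I$; more carefully, one uses that $V(ker(x))^{U^\I} = V(ker(x))$ and AC holds there.

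The real content is Replacement, and this is the step I expect to be the main obstacle. Suppose $w, u \in U^\I$ and $U^\I \models \forall x \in w\, \exists! y\, \varphi(x,y,u)$. Let $A = ker(w) \cup ker(u) \in \I$. The crucial claim is that for each $x \in w$, the unique $y$ with $U^\I \models \varphi(x,y,u)$ satisfies $ker(y) \subseteq A$ — for if some urelement $a \in ker(y) \setminus A$, then by Lemma \ref{idealpermutation} there is a permutation $\pi$ with $\pi\I = \I$, $\pi a \neq a$, and $\pi$ fixing $A - \{a\} \supseteq$ nothing relevant; more precisely choose $\pi$ fixing $A$ pointwise (take $A' = A\cup\{a\} \in\I$ and apply the lemma to get $\pi$ fixing $A'-\{a\}=A$), so $\pi$ is an automorphism of $U^\I$ (since $\pi\I=\I$) fixing $w$, $u$, hence $\pi y$ also satisfies $\varphi(\cdot, \cdot, u)$ with the same $x$, but $\pi y \neq y$ since $\pi$ moves $a \in ker(y)$ — contradicting uniqueness. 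Therefore every such $y$ lies in $V(A)$, so by Proposition \ref{weakcollection} (applied inside $U^\I$, where $V(A)^{U^\I} = V(A)$) there is a collecting set $v$; intersecting $v$ with $U^\I$ and then with $V(A)$ if needed keeps it in $U^\I$ and still collects all the $y$'s. This uniqueness-plus-homogeneity argument — showing the image stays in a fixed $V(A)$ — is the heart of the proof, and it is exactly why $\I$ being closed under the permutations of Lemma \ref{idealpermutation} (equivalently, being an $\A$-ideal rather than an arbitrary class) is needed.
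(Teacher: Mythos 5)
Your overall strategy is the same as the paper's (transitivity plus the easy axioms, then a permutation argument via Lemma \ref{idealpermutation} to show the Replacement image has small kernel), but the crucial step has a genuine gap. You take $A=ker(w)\cup ker(u)$, pick $a\in ker(y)\setminus A$, apply Lemma \ref{idealpermutation} to $A'=A\cup\{a\}$, and then assert ``$\pi y\neq y$ since $\pi$ moves $a\in ker(y)$.'' That inference is not valid: the lemma only guarantees $\pi\I=\I$, $\pi a\neq a$, and that $\pi$ fixes $A$ pointwise, so nothing prevents $\pi a$ from being \emph{another} urelement of $ker(y)\setminus A$. In that case $\pi$ may map $ker(y)$ onto itself and can perfectly well fix $y$ (for instance if $\pi$ just swaps $a$ with $a'\in ker(y)$ and $y$ is symmetric in $a,a'$, e.g.\ $y$ built from $\{a,a'\}$ over $V(A)$), and then no contradiction with the uniqueness of $y$ arises. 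A permutation moving an element of the kernel does not imply it moves the object. The repair is exactly what the paper does: apply Lemma \ref{idealpermutation} to the larger set $A\cup ker(y)$, which is still in $\I$ because $y\in U^\I$; then $\pi$ fixes $w$ and $u$, and $\pi a\notin ker(y)$, so $\pi a\in ker(\pi y)\setminus ker(y)$, whence $\pi y\neq y$ and uniqueness is violated.

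A secondary point: you invoke Proposition \ref{weakcollection} ``applied inside $U^\I$'' to produce the collecting set, but that proposition is a theorem of $\ZFUR$ whose proof uses Replacement, which is precisely the axiom you are in the middle of verifying for $U^\I$ --- as written this is circular. You should instead work in $U$ (where $\ZFCUR$ holds and $U^\I$ and $\varphi^{U^\I}$ are definable): for example, use Replacement in $U$ to collect, for each $x\in w$, the least $\alpha_x$ with the unique $y\in V_{\alpha_x}(A)$, and take $v=V_\alpha(A)$ with $\alpha=\sup_x\alpha_x$, which lies in $U^\I$; or, as the paper does, form the image $v=\{y\in U^\I:\exists x\in w\,\varphi^{U^\I}(x,y,u)\}$ by Replacement in $U$ and show $ker(v)\subseteq ker(w)\cup ker(u)$ directly. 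The remaining verifications (transitivity, the easy axioms, AC, and ``$\A$ is a proper class'') are fine and match the paper.
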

\begin{proof}
It is clear that $U^\I$ is transitive and contains all the urelements and pure sets. Thus, $U^\I$ satisfies Foundation, Extensionality, Infinity, and $\A$ is a proper class in  $U^\I$. It is also immediate that $U^\I$ satisfies Pairing, Union, Powerset and Separation. $U^\I \models$ AC because for any set $x$ in $U^\I$, any bijection in $U$ between $x$ and an ordinal has the same kernel as $x$ and hence also lives in $U^\I$. It remains to show that Replacement holds in $U^\I$.

Suppose that $U^\I \models \forall x \in w \exists ! y \varphi (x, y, u)$ for some $w, u \in U^\I$. Let $v = \{ y \in U^\I : \exists x \in w\ \varphi^{U^\I}(x, y, u) \}$, which is a set in $U$. It suffices to show that $ker(v) \subseteq  ker(w) \cup ker(u)$. Suppose not. Then there are some $y$ and $a$ such that $y \in v$, $a \in ker(y)$ and $a \notin ker(w) \cup ker(u)$. Let $A = ker(w) \cup ker(u) \cup ker(y)$, which is in $\I$. By Lemma \ref{idealpermutation}, there is an automorphism $\pi$ such that (i) $\pi \I = \I$, (ii) $\pi a \neq a$ and (iii) $\pi$ point-wise fixes $A - \{a\}$. So $\pi$ point-wise fixes $w$ and $u$. Since $y \in v$, there is some $x \in w$ with $\varphi^{U^\I}(x, y, u)$. It follows that $\varphi^{U^\I}(x, \pi y, u)$, but $\pi y \neq y$ because $\pi a \in ker(\pi y)$ but $\pi a \notin ker(y)$, which contradicts the uniqueness of $y$.
\end{proof}

\begin{theorem}\label{zfcurindependece}
Assume the consistency of $\ZFCUR$. Over $\ZFCUR$,
\begin{enumerate}
     \item (Closure $\land$ Duplication) $\nrightarrow$ (Plenitude $\lor$ DC$_{\omega_1}$-scheme);
    \item  Collection $\nrightarrow$ Duplication;
    \item  Duplication $\nrightarrow$ (Closure $\lor$ DC$_\omega$-scheme);
    \item  Closure $\nrightarrow$ DC$_\omega$-scheme;
    \item  DC$_\kappa$-scheme $\nrightarrow$ Closure, where $\kappa$ is any infinite cardinal;
    \item (Collection $\land$ DC$_\kappa$-scheme) $\nrightarrow$ DC$_\lambda$-scheme, where $\kappa < \lambda$ are infinite cardinals.
\end{enumerate}
Hence, Diagram \ref{ZFCUdiagram} is complete.
\end{theorem}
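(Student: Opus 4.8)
The approach is uniform: granting $\mathrm{Con}(\ZFCUR)$, fix a model $W\models\ZFCUR+\text{Plenitude}$ in which $\A$ is a proper class — such $W$ exists because from any model of ZFC one builds $\ZFUR$-models carrying a proper class of atoms each cardinality of which is realized — and, for each item, exhibit an $\A$-ideal $\I$ so that the inner model $U^{\I}$ of Theorem~\ref{smallkernelmodel} witnesses the stated non-implication. The design principle is that the relevant axioms are legible from $\I$. A cardinal $\kappa$ is realized in $U^{\I}$ exactly when $\kappa=|A|$ for some $A\in\I$ (a bijection between a set of urelements and an ordinal has the same kernel as that set, and $W\models$ Plenitude supplies enough sets of urelements), so Closure and Plenitude in $U^{\I}$ are read off from the cardinalities occurring in $\I$. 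Homogeneity holds over every $A\in\I$ by the remark in the proof of Lemma~\ref{homogeneitylemma}, so by Theorem~\ref{tail->collection}, $U^{\I}\models$ Tail — and hence Collection, RP, RP$^-$, Closure — as soon as every $A\in\I$ has, up to equinumerosity, a $\subseteq$-largest complement inside $\I$. Crucially, if $\I$ is closed under unions of $\le\kappa$ many of its members then $U^{\I}\models\DCK$: every $\kappa$-sequence of elements of $U^{\I}$ then lies in $U^{\I}$, and one relativizes $W$'s own $\DCK$ — available since $W\models\text{Plenitude}\rightarrow\text{DC}_{<Ord}$ by Theorem~\ref{Plenitude->DCS} — to the formula ``if $x\in U^{\I}$ then $y\in U^{\I}\wedge\varphi^{U^{\I}}(x,y,u)$, else $y=\emptyset$''. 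In the other direction, Lemma~\ref{lemma:DCK->KRealized} gives $U^{\I}\not\models\text{DC}_{\lambda}\text{-scheme}$ whenever $\lambda\notin\{|A|:A\in\I\}$.

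With these tools the six items follow from well-chosen ideals. For (6), with $\kappa<\lambda$, take $\I=\{A\subseteq\A:|A|\le\kappa\}$: it is closed under $\le\kappa$-unions (so $\DCK$ holds), every $A$ has a complement of size $\kappa$ (so Tail, hence Collection, holds), and $\lambda$ is not realized (so $\text{DC}_{\lambda}\text{-scheme}$ fails). For (1), take $\I=\{A:A\text{ countable}\}$: closed under countable unions (so $\text{DC}_{\omega}\text{-scheme}$, and via Tail also Collection and Closure, hold), every countable $A$ has a disjoint countable duplicate (so Duplication holds), but $\aleph_{1}$ is not realized, so Plenitude fails and — by Lemma~\ref{lemma:DCK->KRealized} — so does $\text{DC}_{\omega_{1}}\text{-scheme}$. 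For (5), given infinite $\kappa$, fix a limit cardinal $\mu$ with $\cf(\mu)>\kappa$ (e.g.\ $\mu=\aleph_{\kappa^{+}}$) and take $\I=\{A:|A|<\mu\}$: it is closed under $\le\kappa$-unions (so $\DCK$ holds), yet the realized cardinals are cofinal in the non-realized $\mu$, so Closure fails. For (3), take $\I=\{A:A\text{ finite}\}$: Duplication is immediate; $\aleph_{0}$ is not realized, so Closure fails; and the total formula $\ker(x)\subsetneq\ker(y)$ (total since $\A$ is a proper class in $U^{\I}$) admits no $\omega$-thread, so $\text{DC}_{\omega}\text{-scheme}$ fails.

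The delicate items are (2) and (4), each needing a model of a strong principle in which an $\omega$-scale phenomenon nonetheless breaks. For (2), fix $A_{0}\subseteq\A^{W}$ with $|A_{0}|=\aleph_{1}$ and set $\I=\{A\subseteq\A:|A\setminus A_{0}|\le\aleph_{0}\}$; one checks this is an $\A$-ideal, that every set of urelements disjoint from $A_{0}$ is countable — so $A_{0}$ has no disjoint duplicate, i.e.\ Duplication fails — yet every $X\in\I$ still has a tail, namely $A_{0}\setminus X$ when that retains size $\aleph_{1}$ and otherwise any countably infinite set disjoint from $X$, so Tail and hence Collection hold. For (4), fix a countably infinite $A_{\infty}\subseteq\A^{W}$ and set $\I=\{A\subseteq\A:A\setminus A_{\infty}\text{ finite}\}$; then every set of urelements is countable and $A_{\infty}$ realizes $\aleph_{0}$, so Closure holds, but the formula stating ``$y\subseteq\A$, $y\cap A_{\infty}=\emptyset$, and $|y|>|\ker(x)\setminus A_{\infty}|$'' is total in $U^{\I}$ while any $\omega$-thread $f$ would force $\ker(f)\setminus A_{\infty}$ infinite, contradicting $\ker(f)\in\I$, so $\text{DC}_{\omega}\text{-scheme}$ fails. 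Finally, since the implications drawn in Diagram~\ref{ZFCUdiagram} are transitive, items (1)--(6) jointly rule out every arrow not present, giving the asserted completeness.

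\textbf{Where the difficulty lies.} Checking the ideal axioms and invoking Theorem~\ref{smallkernelmodel} is routine. The substance is (i) the transfer ``$\I$ closed under $\le\kappa$-unions $\Rightarrow U^{\I}\models\DCK$'', which drives (5) and (6) and genuinely needs $W\models$ Plenitude; and (ii) the two bespoke ideals for (2) and (4), where one must verify by hand that Collection (resp.\ Closure) survives even as Duplication (resp.\ $\text{DC}_{\omega}\text{-scheme}$) fails — the case split for Tail in the part-(2) model and the totality together with non-threadability of the displayed formula in the part-(4) model are the crux.
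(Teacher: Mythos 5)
Your proposal is correct and takes essentially the same approach as the paper: it builds the counterexamples as inner models $U^{\I}$ via Theorem \ref{smallkernelmodel} from the same six kinds of $\A$-ideals (countable sets; countable-modulo-a-fixed-$\aleph_1$-set; finite sets; finite-modulo-a-fixed-countable-set; sets of size below $\aleph_{\kappa^+}$; sets of size at most $\kappa$), and uses the same Tail-to-Collection, DC-transfer-by-closure-under-sequences, and Lemma \ref{lemma:DCK->KRealized} arguments. The only cosmetic difference is the ambient model: a single model of Plenitude with a proper class of urelements (invoking Theorem \ref{Plenitude->DCS} for DC$_{<Ord}$), rather than the paper's per-item models in which $\A$ is a set of tailored cardinality (invoking Theorem \ref{easyimplication}(1)).
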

\begin{proof}
In each case, $U$ is a model of $\ZFCUR$. These models exist if we assume the consistency of ZF (see \cite[Theorem 10]{yao2023set}), which follows from the consistency of $\ZFCUR$.

(1) Assume that in $U$, $\A \sim \omega_1$. Let $\I_1$ be the ideal of all countable subsets of $\A$. It is clear that in $U^{\I_1}$, Closure but Duplication hold. Plenitude fails in $U^{\I_1}$ because $\omega_1$ is not realized; since $\A$ is a proper class in $U^{\I_1}$, the DC$_{\omega_1}$-scheme fails by Lemma \ref{lemma:DCK->KRealized}. 

(2) Assume that in $U$, $\A \sim \omega_1$. Fix an $A \subseteq \A$ such that $A \sim \omega_1$ and $\A - A \sim \omega_1$. Let $\I_2 = \{ B \subseteq \A:  B  - A \text{ is countable} \}$. For every $B \in U^{\I_2}$, there will be a countable $C$ that is disjoint from $A\cup B$. Then $C \cup (A- B)$ is a tail of $B$, so Collection holds in $U^{\I_2}$ by Theorem \ref{tail->collection}. Duplication fails because $A$ has no duplicates in $U^{\I_2}$.

(3) Assume that in $U$, $\A \sim \omega$. Let $\I_3$ be the ideal of finite subsets on $\A$. It is cleat that in $U^{\I_3}$ Duplication holds and Closure fails. The DC$_\omega$-scheme fails in $U^{\I_3}$ by Lemma \ref{lemma:DCK->KRealized}.

(4) Assume that in $U$, $\A \sim \omega$ and fix an infinite and co-infinite $A \subseteq \A$. Let $\I_4 = \{ B \subseteq \A: B - A \text{ is finite}\}$. Closure holds in $U^{\I_4}$ because $\omega$ is the greatest realized cardinal. The DC$_\omega$-scheme fails in $U^{\I_4}$ since every set of urelements can be properly extended by another set of urelements disjoint from $A$, yet there cannot be a corresponding infinite sequence.

(5) Let $\kappa$ be an infinite cardinal. Assume that in $U$, $\A \sim \omega_{\kappa^+}$. Let $\I_5$  be the set of sets of urelements of size less than $\omega_{\kappa^+}$. Closure fails in $U^{\I_5}$ because $ \omega_{\kappa^+}$ is not realized while every cardinal below $\omega_{\kappa^+}$ is. To show that the DC$_\kappa$-scheme holds, suppose that for every $x \in U^{\I_5}$, there is some $y \in U^{\I_5}$ such that $\varphi^{U^{\I_5}}(x, y, u)$. Since the DC$_\kappa$-scheme holds in $U$ by Theorem \ref{easyimplication} (1) and $U^{\I_5}$ is closed under $\kappa$-sequences, in $U$ there is a function $f: \kappa \rightarrow U^{\I_5}$ such that $\varphi^{U^{\I_5}}(f\restriction\alpha, f(\alpha), u)$ for every $\alpha < \kappa$, which also exists in $U^{\I_5}$.

(6) It suffices to show that for any infinite cardinal $\kappa$, $\ZFCUR$ + Collection + DC$_\kappa$-scheme does not prove the DC$_{\kappa^+}$-scheme. Assume that in $U$, $\A \sim \kappa^+$ and let $\I_6$ be the ideal of all sets of urelements of size less than $\kappa^+$. $\kappa^+$ is not realized so by Lemma \ref{lemma:DCK->KRealized}, the DC$_{\kappa^+}$-scheme fails in $U^{\I_6}$. Every set of urelements in $U^{\I_6}$ has tail cardinal $\kappa$, so Collection holds by Theorem \ref{tail->collection} and the DC$_\kappa$-scheme holds by Lemma \ref{Tailkappa->DCkappa}.
\end{proof}

\subsection{What is ZFC with urelements?}
There is a close analogy between \textit{ZFC with urelements} and \textit{ZFC without Powerset}. For instance, Zarach \cite{zarach1996replacement} showed that ZFC without Powerset formulated with only Replacement (now commonly denoted by ZFC-) cannot prove Collection.
Moreover, it is shown in \cite{Gitman2016-GITWIT} that ZFC- has various pathological models, e.g., there are models of ZFC- where the \L o\'s theorem fails for some internal ultrapowers (\cite[Theorem 2.15]{Gitman2016-GITWIT}). All of these pathological models, however, can be excluded by Collection, and for this reason it is argued in \cite{Gitman2016-GITWIT} that ZFC without Powerset \textit{should} be formulated with Collection. 

This analogy can be strengthened if we consider internal ultrapowers in $\ZFCUR$. Let $U$ be a model of $\ZFCUR$ and $F, x \in U$ be such that $U \models (F$ is an ultrafilter on $x)$. For every $f, g \in U$ such that $U \models $ ($f, g$ are functions on $x$), define
\begin{itemize}
    \item [] $f =_F g \text{ if and only if } U \models (\{y \in x: f(y) = g(y) \} \in F);$
    \item [] $[f] = \{h \in U : (h \text{ is a function on } x)^U \land h =_F f\};$
    \item [] $U/F = \{ [h] : h \in U \land (h \text{ is a function on } x)^U\}$.
\end{itemize}
For every $[f], [g] \in U/F$, define
\begin{itemize}
    \item [] $[g] \hat{\in} [f] \text{ if and only if } U \models (\{y \in x : g(y) \in f(y)\} \in F);$
    \item [] $\hat{\A}([f]) \text{ if and only if } U \models (\{y \in x : \A( f(y))\} \in F).$
\end{itemize}
Then the \textit{internal ultrapower} is the model $\<U/F, \ \hat{\in}, \ \hat{\A}>$ (denoted by $U/F$). The \L o\'s theorem holds for $U/F$ if for every $\varphi$ and $[f_1], ..., [f_n] \in  U/F$,
\begin{itemize}
    \item [] $U/F \models \varphi ([f_1], ..., [f_n])$ if and only if $\{y \in x : \varphi(f_1(y), ..., f_n(y))\} \in F.$
\end{itemize}
When $V \models $ ZFC, the \L o\'s theorem holds for all internal ultrapowers of $V$.

\begin{theorem}\label{thm:collection<->losthm}
Let $U$ be a model of $\ZFCUR$. The following are equivalent.
\begin{enumerate}
    \item $U \models$ Collection.
    \item The \L o\'s theorem holds for all internal ultrapowers of $U$.
\end{enumerate}
\end{theorem}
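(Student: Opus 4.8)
The plan is to prove $(2)\Rightarrow(1)$ by contraposition, and $(1)\Rightarrow(2)$ by a direct induction on formula complexity. For the easier direction $(2)\Rightarrow(1)$: if $U$ fails Collection, then by Corollary \ref{urcharacterization} (working inside $U$, which satisfies $\ZFCUR$) both Plenitude and Tail fail in $U$. Failure of Plenitude gives a least unrealized ordinal, and failure of Tail gives a set of urelements $A$ with no tail; I would use these to build, inside $U$, an ultrafilter $F$ on some index set $x$ together with functions $f_i$ whose fibered values have kernels cofinal (in the relevant sense) among sets of urelements disjoint from a fixed $A$, so that the candidate witness for an existential statement in the ultrapower would require a set of urelements that does not exist in $U$. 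Concretely, take a statement $\exists y\,\psi(y,\bar a)$ true on an $F$-large set of coordinates but with no uniform bound on $\mathrm{ker}$ of the witnesses; then the \L o\'s equivalence for $\exists$ fails because any single $[g]$ in $U/F$ has a fixed kernel in $U$, while the pointwise witnesses do not. The natural choice is $x=\A$ (or a cofinal family of sets of urelements) with $F$ a suitable ultrafilter, and $\psi$ saying "$y$ is a set of urelements disjoint from $A$ and properly containing $f(\text{coordinate})$".

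For the main direction $(1)\Rightarrow(2)$, assume $U\models\ZFCUR+\text{Collection}$ and fix an ultrafilter $F$ on $x$ in $U$. I would prove the \L o\'s equivalence
\[
U/F\models\varphi([f_1],\dots,[f_n])\iff\{y\in x:\varphi(f_1(y),\dots,f_n(y))\}\in F
\]
by induction on $\varphi$. The atomic cases ($=$, $\hat\in$, $\hat\A$) are immediate from the definitions of $=_F$, $\hat\in$, $\hat\A$; Boolean connectives use that $F$ is an ultrafilter (closure under intersection and complementation). The only nontrivial case is the existential quantifier $\exists z\,\varphi(z,f_1(y),\dots,f_n(y))$. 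Suppose $S=\{y\in x:\exists z\,\varphi(z,f_1(y),\dots,f_n(y))\}\in F$. For each $y\in S$ there is some $z$ with $\varphi(z,\bar f(y))$; the problem is to assemble these into a single function $g$ on $x$ in $U$. This is exactly where Collection is needed: I would apply Collection in $U$ to the formula "$z$ witnesses $\varphi$ for coordinate $y$" over the set $S$, obtaining a set $v\in U$ such that every $y\in S$ has a witness in $v$; then by AC in $U$ choose, for each $y\in S$, such a witness $g(y)\in v$ (and $g(y)=\emptyset$ off $S$). Then $g$ is a genuine function in $U$, $[g]\in U/F$, and $\{y:\varphi(g(y),\bar f(y))\}\supseteq S\in F$, so by the induction hypothesis $U/F\models\varphi([g],[f_1],\dots,[f_n])$, giving $U/F\models\exists z\,\varphi$. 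The converse within the $\exists$ case is routine: a witness $[g]$ in $U/F$ yields, by the induction hypothesis, an $F$-large set of coordinates where $g(y)$ witnesses $\varphi$, which is contained in $S$.

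The main obstacle is precisely the existential step: without Collection one cannot guarantee that the class of pointwise witnesses is bounded by a set in $U$, so there is no way to form $[g]$. This is why plain Replacement is insufficient — the witnessing function $y\mapsto z$ need not be functional (many $z$ may work), so Replacement does not apply, and in the urelement context (unlike in $\ZFC$ proper, where Collection is a theorem) this genuinely fails, which is what powers the converse direction. A minor point to handle carefully is that $=_F$-equivalence classes $[f]$ are proper classes in $U$ (Scott's trick is unavailable across the $=_F$ relation in the naive form), but this does not affect the argument: the definitions of $\hat\in$, $\hat\A$, and the induction are all manifestly independent of the choice of representatives, using only that $F$ is a filter. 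I would also note at the outset that all of this is carried out over the metatheory treating $U$ as a (set or class) model, with the induction on $\varphi$ being an external induction on the (standard) complexity of $\varphi$, matching the schematic statement of \L o\'s theorem.
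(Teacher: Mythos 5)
Your direction (1) $\Rightarrow$ (2) is fine and is essentially the standard argument the paper has in mind: the only non-routine step of the \L o\'s induction is the existential case, where Collection bounds the pointwise witnesses by a set and AC in $U$ then selects a witnessing function $g$ with $[g]\in U/F$. The problem is in (2) $\Rightarrow$ (1), where your concrete construction cannot be carried out. Since Collection fails, $\A$ is a proper class (``$\A$ is a set'' implies Collection by Proposition \ref{weakcollection}), so $x=\A$ is not an admissible index set: an internal ultrapower requires an ultrafilter $F\in U$ on a \emph{set} $x\in U$. The fallback ``cofinal family of sets of urelements'' fares no better: if a set $w$ of sets of urelements disjoint from $A$ were cofinal in the sense that every set of urelements disjoint from $A$ injects into some member of $w$, then $\bigcup w$ would be a tail of $A$, contradicting exactly the hypothesis that $A$ has no tail; and if the family is a proper class it again cannot index an internal ultrapower. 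So the very failure of Tail that you invoke rules out the index sets you propose, and the formula $\psi$ about ``properly containing $f(\text{coordinate})$'' never gets off the ground.

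The missing idea is to index by ordinals rather than by (sets of) urelements, and to relativize the unrealized cardinal to $A$. Fix $A$ with no tail and let $\kappa$ be the least cardinal not realized by any set of urelements disjoint from $A$; it exists because Plenitude fails, and it is a \emph{limit} cardinal, since if $\kappa=\lambda^+$ then a set realizing $\lambda$ disjoint from $A$ would be a tail of $A$. Let $F\in U$ be a uniform ultrafilter on $\kappa$ (every member unbounded). By minimality of $\kappa$, the statement ``there is $B\subseteq\A$ with $B\sim\alpha$ and $B\cap A=\emptyset$'' holds for every $\alpha<\kappa$, hence on an $F$-large set; if \L o\'s held, there would be $g\in U$ with $g(\alpha)\subseteq\A$, $g(\alpha)\sim\alpha$ and $g(\alpha)\cap A=\emptyset$ on an $F$-large, hence unbounded, set of $\alpha$, and then $\bigcup_{\alpha\in x}g(\alpha)$ is a set in $U$ of size $\kappa$ disjoint from $A$, contradicting the choice of $\kappa$. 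Your slogan (pointwise witnesses with unboundedly large kernels versus a single witness function with a set-sized kernel) is the right intuition, but without the relative $\kappa$ and its limit-ness (which comes from the no-tail hypothesis) the contradiction does not materialize; in particular the global ``least unrealized ordinal'' you mention can be a successor cardinal when Collection fails (e.g.\ in the model $U^{\I_4}$ of Theorem \ref{zfcurindependece}), and then the union of the pointwise witnesses need not be large enough to contradict anything.
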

\begin{proof}
The proof of (1) $\rightarrow$ (2) is standard, and the point here is that the use of Collection is essential. 

(2)$\rightarrow$(1). Suppose that Collection fails in $U$. By Diagram \ref{ZFCUdiagram}, it follows that both Plenitude and Tail fail in $U$. In $U$, fix some $A \subseteq \A$ that does not have a tail and let $\kappa$ be the least cardinal that is not realized by any $B\subseteq \A$ disjoint from $A$, which is an infinite limit cardinal. Let $F \in U$ be an ultrafilter on $\kappa$ containing all the unbounded subsets of $\kappa$. Suppose \textit{for reductio} that the \L o\'s theorem  holds for $U/F$. Let $id$ be the identity function on $\kappa$ and $c_A$ be the constant function sending every $\alpha < \kappa$ to $A$. Since $\{ \alpha < \kappa : \exists B \subseteq \A \ (B \sim \alpha \land B \cap A =\emptyset)\} \in F$, by  the \L o\'s theorem $U/F \models \exists B \subseteq \A (B \sim [id] \land B \cap [C_A] =\emptyset)$. Thus, there is some $g \in U$ such that 
$$U/F \models [g] \subseteq \A \land [g] \sim [id] \land ([g] \cap [C_A] =\emptyset).$$
Let $x= \{\alpha < \kappa : g(\alpha) \subseteq \A \land g(\alpha) \sim \alpha \land (g(\alpha) \cap A = \emptyset) \}$, which is in $F$ by the \L o\'s theorem again. Then $D = \bigcup_{\alpha\in x} g(\alpha)$ has size $\kappa$ and is disjoint from $A$---contradiction.
\end{proof}
Later we will see that over $\ZFCUR$, Collection is also equivalent to the principle that every (properly defined) forcing relation is full (Theorem \ref{collection<->fullness}). These results suggest that Collection should be part of a robust axiomatization of ZFC with urelements.

\section{Forcing with urelements}\label{section3}
\subsection{Existing approach}
We now turn to forcing over countable transitive models of $\ZFCUR$. Given a forcing poset $\P$, a natural thought is that each urelement behaves as a different copy of $\emptyset$ and so we may treat every urelement as its own $\P$-name. This approach has been adopted in all existing studies such as \cite{blass1989freyd}, \cite{hall2002characterization} and \cite{Hall2007-ERIPMA}. 
%Old P-names%
\begin{definition}\label{oldpnames}
Let $\P$ be a forcing poset. $\dot{x}$ is a $\P$-name$_{\scaleto{\#}{4pt}}$ if and only if either $\dot{x}$ is a urelement, or $\dot{x}$ is a set of ordered-pairs $\langle \dot{y}, p \rangle$, where $\dot{y}$ is a $\P$-name$_{\scaleto{\#}{4pt}}$ and $p \in \P$. $U^{\P}_{\scaleto{\#}{4pt}} = \{ \dot{x} : \dotx \text{ is a } \P\text{-name}_{\scaleto{\#}{4pt}}\}$.
\end{definition}

\begin{definition}\label{oldmg}
Let $M$ be a countable transitive model of $\ZFUR$, $\P \in M$ be a forcing poset, and $G$ be an $M$-generic filter over.
\begin{enumerate}
    \item $M^\P_{\scaleto{\#}{4pt}} = M \cap U^\P_{\scaleto{\#}{4pt}}$.
    \item For every $\dotx \in M^\P_{\scaleto{\#}{4pt}}$, 
    \begin{equation*}
       \dotx_G =
    \begin{cases*}
       \dotx & if $\A(\dotx)$ \\
\{\dot{y}_G : \exists p \in G \langle \dot{y}, p \rangle \in \dot{x} \}     & otherwise 
    \end{cases*}
\end{equation*}

\item  $M[G]_{\scaleto{\#}{4pt}} = \{\dotx_G : \dotx \in M^\P_{\scaleto{\#}{4pt}}  \}$.
\item For every $\dotx_1, ..., \dotx_n \in  M^\P_{\scaleto{\#}{4pt}}$ and $p \in \P$, $p \forces_{\scaleto{\#}{4pt}} \varphi (\dot{x}_1, ..., \dot{x}_n)$ if and only if for every $M$-generic $H$ such that $p \in H$, $M[H] \models \varphi (\dotx_{1_H}, ..., \dotx_{n_H})$.
\end{enumerate}
\end{definition}
\noindent With these definitions, one can easily prove the forcing theorems for $\forces_{\scaleto{\#}{4pt}}$ by making trivial adjustments to the standard argument. And it is clear that $M[G]_{\scaleto{\#}{4pt}}$ is transitive, $M \subseteq M[G]_{\scaleto{\#}{4pt}}$, and $G \in M[G]_{\scaleto{\#}{4pt}}$. In fact, $M[G]_{\scaleto{\#}{4pt}}$ is a countable transitive model of $\ZFUR$ (see Appendix).

However, an important feature of forcing is missing in this approach, which is why the subscript $\#$ is added. Given $M$ and $\P$ as above, the forcing relation $\forces_{\scaleto{\#}{4pt}}$ given by $\P$ is said to be \textit{full} if whenever $p \forces_{\scaleto{\#}{4pt}} \exists y \varphi(y, \dotx_1, ..., \dotx_n)$ for $\dotx_1, ..., \dotx_n \in M^\P_{\scaleto{\#}{4pt}}$, there is a $\doty \in M^\P_{\scaleto{\#}{4pt}}$ such that $p \forces_{\scaleto{\#}{4pt}} \varphi(\doty, \dotx_1, ..., \dotx_n)$. It is a standard result that if $M \models$ ZFC, then for every forcing poset in $M$, its forcing relation is full. Fullness is important for various forcing constructions such as iterated forcing and Boolean-valued ultrapowers.

\begin{remark}\label{oldnamenotfull}
Let $M$ be a countable transitive model of $\ZFUR$ with urelements. Then for every $\P$ with a maximal antichain with at least two elements, its forcing relation $\forces_{\scaleto{\#}{4pt}}$ is not full.
\end{remark}
\begin{proof}
 Suppose that $\P \in M$ has a maximal antichain $\<p_i : i \in I>$ indexed by some $I$ ($|I| > 1$). Let $\<a_i : i \in I>$ be some urelements such that at least two of them are distinct. Consider the $\P$-name$_{\scaleto{\#}{4pt}}$ $\dotx = \{\<a_i, p_i> : i \in I \}$. It follows that $1_\P \forces_{\scaleto{\#}{4pt}} \exists y (y \in \dotx)$. But if $1_\P \forces_{\scaleto{\#}{4pt}} \doty \in \dotx$ for some $\doty \in M^\P_{\scaleto{\#}{4pt}}$, then $\doty$ must be some $a_i$, which is impossible since one can take an $M$-generic filter containing $p_j$ for some $j \neq i$.
\end{proof}
\noindent A diagnosis is that $M^\P_{\scaleto{\#}{4pt}}$ contains too few names. In pure set theory, whenever $f$ is a function from an antichain in a forcing poset $\P$ to some $\P$-names, we can define a \textit{mixture} of $f$, $\doty$, such that $p \forces f(p) = \doty$ for every $p \in dom(f)$. But as we have seen, $M^\P_{\scaleto{\#}{4pt}}$ does not even contain a mixture of two urelements. This motivates a new definition of $\P$-names with urelements.

\subsection{New approach}
\begin{definition}\label{newpnames}
Let $\P$ be a forcing poset. $\dot{x}$ is a $\P$-name if and only if (i) $\dot{x}$ is a set of ordered-pairs $\langle y, p \rangle$ where $p\in \P$ and $y$ is either a $\P$-name or a urelement, and (ii) whenever $\langle a, p \rangle, \langle y, q\rangle \in \dot{x}$, where $a$ is a urelement and $a \neq y$, $p$ and $q$ are incompatible. $U^\P = \{ \dotx : \dotx \text{ is a } \P \text{-name}\}$.
\end{definition}
\begin{definition}\label{newmg}
Let $M$ be a countable transitive model of $\ZFUR$, $\P \in M$ be a forcing poset and $G$ be an $M$-generic filter over $\P$.
\begin{enumerate}
    \item $M^\P = U^\P \cap M$
   \item For every $\dot{x} \in M^\P$, 
    \begin{equation*}
    \dot{x}_G =
    \begin{cases*}
       a & if $\A(a)$ and  $\langle a, p \rangle \in \dot{x}$ for some $p \in G$   \\
 \{ \dot{y}_G: \langle \doty , p \rangle \in \dotx \text{ for some } \doty \in M^\P \text{ and } p \in G \}      & otherwise 
    \end{cases*}
\end{equation*}
   
    \item $M[G] = \{\dotx_G : \dot{x} \in M^\P \}$. 
    
    \item For every urelement $a \in M$, $\check{a} = \{\<a, 1_\P>\}$; for every set $x \in M$, $\check{x} = \{\<\check{y}, 1_\P> : y \in x\}$.
\end{enumerate}
For every $\dotx_1, ..., \dotx_n \in  M^\P$ and $p \in \P$, $p \forces \varphi (\dot{x}_1, ..., \dot{x}_n)$ if and only if for every $M$-generic $G$ such that $p \in G$, $M[G] \models \varphi (\dotx_{1_G}, ..., \dotx_{n_G})$.
\end{definition}

 Let us explain the idea behind this new forcing machinery. First, no urelement is a $\P$-name in $U^\P$, and each urelement $a$ is represented by $\{\<a, 1_\P >\}$ rather than itself. Second, when $\<a, p> \in \dotx$ for some urelement $a$, this indicates that $a$ will be \textit{identical to}, rather than \textit{a member of}, $\dotx_G$ for any generic filter $G$ containing $p$. This motivates the incompatibility condition (ii) in Definition \ref{newpnames} by the following reasoning. Suppose that $a$ is a urelement and $\langle a, p \rangle \in \dot{x}$. If $ \langle b, q\rangle \in \dot{x}$, where $b$ is a different urelement, then $p$ must be incompatbile with $q$ since $\dotx$ must not be interpreted as two different urelements in any generic extension. Also, if  $ \langle \doty, q\rangle \in \dotx$, where $\doty$ is a $\P$-name, then $p$ must be incompatbile with $q$ as well since otherwise $\doty$ would become a member of $\dotx$ in some generic extension where $\dotx$ is interpreted as a urelement. We note that the two forcing methods we have seen produce the same forcing extensions (see Appendix).

\begin{lemma}\label{McoversM[G]}
Let $M$ be a countable transitive model of $\ZFUR$, $\P \in M$ be a forcing poset, and $G$ be an $M$-generic filter over $\P$. Then
\begin{enumerate}
    \item  $M \subseteq M[G]$.
    \item  $G \in M[G]$.
    \item  $M[G]$ is transitive.
    \item  $Ord \cap M = Ord \cap M[G]$.
    \item  For every transitive model $N$ of $\ZFUR$ such that $G\in N$ and $M \subseteq N$, $M[G] \subseteq N$.
    \item $\A \cap M = \A \cap M[G]$.
    \item $ker(\dot{x}_G) \subseteq ker(\dot{x})$ for every $\dot{x} \in M^{\P}$.
    \item For every set of urelements $A \in M[G]$, there is a set of urelements $A' \in M$ such that $A \subseteq A'$.
    \item $M \models$ ($\A$ is a set) if and only if $M [G] \models$ ($\A$ is a set).
\end{enumerate}
\end{lemma}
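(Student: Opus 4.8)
The plan is to prove the nine items in order, treating (1)--(5) as routine transcriptions of the classical forcing facts and isolating the genuinely urelement-specific content in (6)--(9), all of which hinge on item (7).

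For (1), I would use the canonical names $\check{x}$ of Definition \ref{newmg}: an easy $\in$-induction shows each $\check{x}$ is a $\P$-name lying in $M$ (its defining recursion uses only $\P$, Pairing and Replacement, available in $M$, and clause (ii) of Definition \ref{newpnames} is vacuous since $\check{x}$ contains either no urelement pair or exactly one) and that $\check{x}_G = x$, the urelement base case using $1_\P \in G$. For (2), the name $\dot{G} = \{\langle \check{p}, p \rangle : p \in \P\} \in M$ satisfies $\dot{G}_G = G$. Item (3) is immediate: every member of $\dot{x}_G$ has the form $\dot{y}_G$ with $\dot{y} \in M^\P$. For (4), one inclusion is (1); for the other, an easy $\in$-induction gives $rank(\dot{x}_G) \le rank(\dot{x})$ (values that are urelements have rank $0$), so any ordinal $\gamma = \dot{x}_G$ satisfies $\gamma = rank(\gamma) \le rank(\dot{x}) \in Ord \cap M$, whence $\gamma \in M$ by transitivity. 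For (5), since $G \in N$, $M \subseteq N$ and $N \models \ZFUR$, the interpretation map $\dot{x} \mapsto \dot{x}_G$ restricted to the transitive closure of any given $\dot{x} \in M^\P$ is definable by $\in$-recursion inside $N$, so $\dot{x}_G \in N$ for every $\dot{x} \in M^\P$.

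The crux is (7): $ker(\dot{x}_G) \subseteq ker(\dot{x})$, proved by $\in$-induction on $\dot{x}$. First I would note that the first clause in the definition of $\dot{x}_G$ is well-defined, since if $\langle a, p \rangle$ and $\langle b, q \rangle$ lie in $\dot{x}$ with $a, b$ distinct urelements then $p$ and $q$ are incompatible by clause (ii) of Definition \ref{newpnames}, so the filter $G$ cannot contain both. If $\dot{x}_G$ is a urelement $a$, then $\langle a, p \rangle \in \dot{x}$ for some $p \in G$, whence $a \in ker(\dot{x})$ and $ker(\dot{x}_G) = \{a\} \subseteq ker(\dot{x})$. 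If $\dot{x}_G$ is a set, every member of it is $\dot{y}_G$ for some $\P$-name $\dot{y}$ appearing in $\dot{x}$, and the inductive hypothesis gives $ker(\dot{y}_G) \subseteq ker(\dot{y}) \subseteq ker(\dot{x})$. Item (6) then follows quickly: $\A \cap M \subseteq \A \cap M[G]$ via the names $\check{a}$, and conversely a urelement $\dot{x}_G$ can only arise through the first clause, so $\dot{x}_G = a$ with $\langle a, p \rangle \in \dot{x} \in M$, hence $a \in M$; absoluteness of $\A$ for transitive models finishes it. Item (8) is then immediate: if $A \in M[G]$ is a set of urelements, write $A = \dot{x}_G$ and observe $A = ker(A) = ker(\dot{x}_G) \subseteq ker(\dot{x})$, so $A' := ker(\dot{x}) \in M$ is the required set of urelements.

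Finally, (9) is bookkeeping with (6) and (8). If $M \models$ ``$\A$ is a set'', that set is $\A \cap M = \A \cap M[G]$ and lies in $M \subseteq M[G]$, so $M[G] \models$ ``$\A$ is a set''. Conversely, if $M[G] \models$ ``$\A$ is a set'', that set is $\A \cap M[G]$, which by (8) is contained in some set of urelements $A' \in M$; since also $A' \subseteq \A \cap M = \A \cap M[G]$, we get $\A \cap M = A' \in M$. I expect no serious obstacle: the only points that need care are the well-definedness of the first clause of the interpretation (used in (7), secured by clause (ii) of Definition \ref{newpnames}) and the observation in (5) that the interpretation recursion can be carried out inside $N$; everything else is a direct adaptation of the standard arguments, with $ker$ taking over the role that rank plays in the proof that forcing adds no new ordinals.
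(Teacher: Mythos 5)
Your proposal is correct and follows essentially the same route as the paper: (1)--(5) by the standard textbook arguments (via $\check{x}$, $\dot{G}$, rank, and running the interpretation recursion in $N$), (7) by induction on names using the kernel in place of rank, and (6), (8), (9) derived from (6)--(8) exactly as the paper does. The extra details you supply (well-definedness of the urelement clause via the incompatibility condition, and the explicit identification $A' = \ker(\dot{x})$) are accurate refinements of the same argument.
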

\begin{proof}
(1)--(5) are all proved by standard text-book arguments as in \cite[Ch.VII]{kunen2014set}. 

(6) This is clear by the construction of $M[G]$ because every urelement in $M[G]$ must come from $ker(\dotx)$ for some $\dotx \in M^\P$. 

(7) This is proved by induction on the rank of $\dot{x}$. We may assume that $\dotx_G$ is a set. Since $ker(\dot{x}_{G}) \subseteq \bigcup \{ ker(\dot{y}_G) : \dot{y} \in dom(\dot{x})\}$ and by the induction hypothesis we have $ker(\dot{y}_G) \subseteq ker(\dot{y}) \subseteq ker(\dot{x})$ for every $\dot{y} \in dom(\dot{x})$, it follows that $ker(\dot{x}_G) \subseteq ker(\dot{x})$.

(8) For every $A \in M[G]$, $A = \dotx_G$ for some $\dotx \in M^\P$ so by (7) we have $A = ker(\dotx_G) \subseteq ker(\dotx)$.

(9) The left-to-right direction follows from (4) and (6). For the other direction, let $A$ be the set of all urelements in $M[G]$. By (8), $A \subseteq A'$ for some $A' \in M$. So if $a$ is an urelement in $M$, it follows from (6) that $a \in A'$.
\end{proof}

Next we need to prove the forcing theorems for $\forces$, i.e., ``$p \forces \varphi$'' is definable in $M$ for every $\varphi$, and every truth in a generic extension is forced by some condition in the corresponding generic filter. The first step is to define an internal forcing relation.

\begin{definition}
Let $M$ and $\P$ be as before. The forcing language $\mathcal{L}^M_{\P}$ contains $\{\subseteq, =, \in, \A, \overset{\mathscr{A}}{=}\}$ as the non-logical symbols and every $\P$-name in $M^\P$ as a constant symbol. For every $p \in \P$ and $\varphi \in \mathcal{L}^M_{\P}$, we define $p \forces^* \varphi$ by recursion as follows.
\begin{enumerate}
    \item $p \forces^* \A(\dotx_1)$ if and only if $\{q \in \P : \exists \<a , r> \in \dotx_1 \ (\A(a) \land q \leq r)\}$ is dense below $p$.
    \item $p \forces^* \dotx_1 \Aeq \dotx_2$ if and only if $\{q \in \P : \exists a, r_1, r_2 (\A(a) \land \<a, r_1> \in \dotx_1 \land \<a, r_2> \in \dotx_2 \land q \leq r_1, r_2 )\} \cup \{q \in \P : \forall \<a_1, r_1> \in \dotx_1 \ (\A(a_1) \rightarrow q \bot r_1) \land \forall \<a_2, r_2> \in \dotx_2 \ (\A(a_2) \rightarrow q \bot r_2)\}$ is dense below $p$. 
    \item $p \forces^* \dotx_1 \in \dotx_2$ if and only if $\{ q \in \P : \exists \<\doty, r> \in \dotx_2 (q \leq r \land \doty \in M^\P \land q \forces^* \doty = \dotx_1)\}$ is dense below $p$.
    \item $p \forces^* \dotx_1 \subseteq \dotx_2$ if and only if for every $\doty \in M^\P$ and $r, q \in \P$, if $\<\doty, r> \in \dotx_1$ and $q \leq p, r$, then $q \forces^* \doty \in \dotx_2$.
    \item $p \forces^* \dotx_1 = \dotx_2$ if and only if $p \forces^* \dotx_1 \subseteq \dotx_2$, $p \forces^* \dotx_2 \subseteq \dotx_1$ and $p \forces^* \dotx_1 \Aeq \dotx_2$.
    \item $p \forces^* \neg \varphi$ if and only if there is no $q \leq p$ such that $q \forces^* \varphi$.
    \item $p \forces^* \varphi \land \psi$ if and only if $p \forces^* \varphi$ and $p \forces^* \psi$.
    \item $p \forces^* \exists x \varphi$ if and only if $\{q \in \P : \text{ there is some } \dotz \in M^{\P} \text{ such that }q \forces^* \varphi(\dotz)\}$ is dense below $p$.
\end{enumerate}
\end{definition}
\noindent The idea behind the predicate $\Aeq$ is that if $p \forces^* \dotx_1 \Aeq \dotx_2$, then in every generic filter $G$ containing $p$, either $\dotx_{1_G}$ and $\dotx_{2_G}$ are the same urelement, or neither of them is a urelement.
\begin{lemma}\label{forcingbasic}
Let $M$ and $\P$ be as before. For every $p, q \in \P$,
\begin{enumerate}
    \item If $p \forces^* \varphi$ and $q \leq p$, then $q \forces^* \varphi$.
    \item If $\{r \in \P : r \forces^* \varphi \}$ is dense below $p$, $p \forces^* \varphi$. \qed
\end{enumerate}
\end{lemma}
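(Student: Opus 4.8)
The plan is to prove (1) and (2) simultaneously by induction, where for atomic $\varphi$ the induction runs along the well-founded recursion that legitimizes the definition of $\forces^*$ on names (so that every recursive call in clauses (1)--(5) lands at an atomic statement strictly lower in the recursion), and for compound $\varphi$ the induction is on the number of logical connectives. Two elementary facts about density will be used repeatedly: (a) if $D$ is dense below $p$ and $q\leq p$, then $D$ is dense below $q$; and (b) if $D$ is dense below $p$ and $E$ is dense below $r$ for every $r\in D$, then $E$ is dense below $p$. Both follow immediately from the definition of ``dense below''.

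For part (1): clauses (1), (2), (3) and (8) each assert that a certain set is dense below $p$, so (1) for these is immediate from (a). Clause (4) ($\subseteq$), and hence clause (5) ($=$), quantifies universally over conditions below $p$, so passing to $q\leq p$ only shrinks the range of the quantifier and monotonicity is direct; clause (6) ($\neg$) is likewise direct (if no $r\leq p$ forces $\varphi$ then no $r\leq q$ does either), and clause (7) ($\wedge$) follows from the induction hypothesis applied to the two conjuncts.

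For part (2): for the clauses phrased via density ((1), (2), (3), (8)), the hypothesis that $\{r:r\forces^*\varphi\}$ is dense below $p$ together with the observation that each such $r$ makes the corresponding defining set dense below $r$ yields, by (b), that the defining set is dense below $p$, i.e.\ $p\forces^*\varphi$. Clause (4) ($\subseteq$) is the one place that genuinely needs unwinding: given $\langle\doty,s\rangle\in\dotx_1$ and $q\leq p,s$, one checks that $\{t:t\forces^*\doty\in\dotx_2\}$ is dense below $q$ --- below any $t\leq q$ pick $r\leq t$ with $r\forces^*\dotx_1\subseteq\dotx_2$, and since $r\leq s$ the definition of $\subseteq$ gives $r\forces^*\doty\in\dotx_2$ --- and then concludes $q\forces^*\doty\in\dotx_2$ by the induction hypothesis for (2) applied to $\doty\in\dotx_2$, which is a subcall of $\dotx_1\subseteq\dotx_2$ and hence lower in the recursion; clause (5) ($=$) reduces to this together with the $\Aeq$ case. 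For clause (7) ($\wedge$), density of $\{r:r\forces^*\varphi\wedge\psi\}$ entails density of $\{r:r\forces^*\varphi\}$ and of $\{r:r\forces^*\psi\}$, and one applies the induction hypothesis to each. Clause (6) ($\neg$) is the only case that invokes part (1): if $\{r:r\forces^*\neg\varphi\}$ is dense below $p$ but some $q\leq p$ has $q\forces^*\varphi$, choose $r\leq q$ with $r\forces^*\neg\varphi$; by (1), $r\forces^*\varphi$, contradicting that $r\forces^*\neg\varphi$ already precludes $r$ itself forcing $\varphi$.

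I do not anticipate a real obstacle --- this is the familiar ``forcing is generated by dense sets'' bookkeeping --- but the step demanding the most care is the $\subseteq$-clause of part (2), since $p\forces^*\dotx_1\subseteq\dotx_2$ is not itself a density statement and one must descend to the atomic statement $\doty\in\dotx_2$; the only subtlety is to be sure that this descent respects the well-founded measure underlying the definition of $\forces^*$, which it does because $\doty\in\dotx_2$ literally occurs as a recursive subcall in clause (4).
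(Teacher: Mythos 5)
Your argument is correct and is exactly the routine density bookkeeping the paper has in mind when it states this lemma with no proof (the \qed signals it is considered immediate): monotonicity is direct in every clause, and for (2) the density-based clauses follow from the composition of dense sets, with the $\subseteq$-clause unwound via the subcall $\doty\in\dotx_2$ and negation handled via part (1). No gaps; your treatment of the $\subseteq$-clause, the one non-density clause, is the right way to handle it and respects the well-founded recursion underlying $\forces^*$.
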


\begin{lemma}\label{forcinglemma}
Let $M$ be a countable transitive model of $\ZFUR$, $\P \in M$ be a forcing poset and $G$ be an $M$-generic filter over $\P$. For every $\dotx_1, ..., \dotx_n \in M^\P$,
\begin{enumerate}
    \item If $p\in G$ and $p \forces^* \varphi(\dotx_1, ..., \dotx_n)$, then $M[G] \models \varphi (\dotx_{1_G}, ..., \dotx_{n_G})$.
    \item If $M[G] \models \varphi (\dotx_{1_G}, ..., \dotx_{n_G})$, then there is some $p \in G$ such that $p \forces^* \varphi(\dotx_1, ..., \dotx_n)$.
\end{enumerate}
\end{lemma}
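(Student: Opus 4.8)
The plan is to prove the two directions of the forcing theorem simultaneously by induction on the complexity of $\varphi$, following the standard Kunen-style argument but being careful about the new atomic cases introduced by urelements, namely $\A(\dotx_1)$, $\dotx_1 \Aeq \dotx_2$, $\dotx_1 \subseteq \dotx_2$, $\dotx_1 \in \dotx_2$ and $\dotx_1 = \dotx_2$. For the atomic cases we cannot do a plain induction on formula complexity, since $=$, $\in$ and $\subseteq$ are mutually recursive; instead I would run an induction on a suitable well-founded rank assigned to pairs (or triples) of $\P$-names, exactly as in the pure case, with the three relations $\in$, $\subseteq$, $\Aeq$ handled together at each rank. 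The predicates $\A(\cdot)$ and $\Aeq$ are the genuinely new ingredients, but they are ``shallow'': $p \forces^* \A(\dotx_1)$ and $p \forces^* \dotx_1 \Aeq \dotx_2$ refer only to the top-level pairs $\langle a, r\rangle$ appearing in $\dotx_1, \dotx_2$ with $a$ a urelement, so they can be verified directly from the definition of $\dotx_G$ without invoking the inductive hypothesis at all.

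I would organize the argument as follows. First, establish Lemma \ref{forcingbasic} (monotonicity and density-closure of $\forces^*$), which is a routine check from the definition and is needed throughout. Next, prove the atomic cases. For $\A(\dotx_1)$: if $p \in G$ and the relevant set is dense below $p$, genericity gives some $q \in G$ with $\langle a, r\rangle \in \dotx_1$, $\A(a)$, $q \le r$, so $r \in G$ and hence $\dotx_{1_G} = a$ is a urelement; conversely if $\dotx_{1_G}$ is a urelement $a$, then some $\langle a, r\rangle \in \dotx_1$ with $r \in G$, and one checks the density condition holds below that $r$, hence below any $p \in G$ compatible with it — more precisely one finds $p \in G$, $p \le r$ forcing it. For $\Aeq$: the two disjuncts in clause (2) correspond precisely to the two ways $\dotx_{1_G}, \dotx_{2_G}$ can fail to be distinct urelements, namely ``both equal the same urelement'' and ``neither is a urelement'', and the incompatibility condition (ii) in Definition \ref{newpnames} is exactly what guarantees that a single condition cannot force a name to be a urelement in two incompatible ways; the verification is again a genericity/density argument. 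Then handle $\subseteq$, $\in$, $=$ by the simultaneous rank induction: here the arguments are essentially the classical ones, with the only modification that in the definition of $\dotx_G$ and of $p \forces^* \dotx_1 \in \dotx_2$ one restricts to pairs $\langle \doty, r\rangle$ with $\doty \in M^\P$ an actual $\P$-name (the urelement pairs being irrelevant to membership), so the inductive hypothesis applies cleanly to $\doty$.

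Finally, the inductive step for the logical connectives. The cases $\neg\varphi$ and $\varphi \land \psi$ are standard. For $\exists x\, \varphi$, direction (1) is immediate from density and the inductive hypothesis. For direction (2) — that $M[G] \models \exists x\, \varphi$ implies some $p \in G$ with $p \forces^* \exists x\,\varphi$ — the usual argument produces a witness $\dotz_G$ for some $\dotz \in M^\P$, applies the inductive hypothesis to get $p \in G$ with $p \forces^* \varphi(\dotz)$, and then notes the defining dense set for $\exists x\,\varphi$ contains $p$, so $p$ itself forces $\exists x\,\varphi$ by Lemma \ref{forcingbasic}(2). I expect the main obstacle to be bookkeeping rather than conceptual: setting up the rank on tuples of names so that all of $=$, $\in$, $\subseteq$, $\Aeq$ descend properly, and checking in the $\Aeq$ and $=$ cases that condition (ii) of Definition \ref{newpnames} is used in exactly the right place so that ``$p$ forces $\dotx$ to be a urelement'' behaves consistently — if two incompatible-free pairs $\langle a, r_1\rangle, \langle b, r_2\rangle$ with $a \neq b$ could sit in one name with $r_1, r_2$ compatible, the first disjunct of clause (2) would be satisfiable in a way that contradicts functionality of the interpretation, so the well-definedness of the whole scheme rests on that clause of the definition. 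Once that is pinned down, the remaining verifications are the classical forcing-theorem computations adapted verbatim.
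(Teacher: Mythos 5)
Your proposal is correct and follows essentially the same route as the paper: the connective and quantifier cases are the standard Kunen argument, the atomic cases are done by induction on the rank of the $\P$-names with $\in$, $\subseteq$, $=$ handled classically, and the new predicates $\A(\cdot)$ and $\Aeq$ are verified by direct density/genericity arguments at the top level of the names, with condition (ii) of Definition \ref{newpnames} doing exactly the work you identify. The only difference is presentational: the paper folds $\subseteq$ and $\Aeq$ into the treatment of $=$ (via explicit dense sets $D_i$, $E_i$, $F_i$) rather than running a separate simultaneous induction on the three relations, which amounts to the same bookkeeping.
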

\begin{proof}
Since the Boolean and quantifier cases can be proved in the same way as in \cite[Chapter VII. Theorem 3.5]{kunen2014set}, we omit their proofs. It remains to show that the lemma holds for all atomic formulas, which we shall prove by induction on the rank of the $\P$-names.\\

\noindent \textit{Case 1}. $\varphi$ is $\dotx_1 \in \dotx_2$. The argument is the same as in \cite[Chapter VII, Theorem 3.5]{kunen2014set}.\\

\noindent \textit{Case 2}. $\varphi$ is $\A(\dotx)$.  For (2), suppose that $\dotx_G$ is some urelement $b$. Then $\<b, p> \in \dotx$ for some $p \in G$, so $\{q \in \P : \exists \<a , r> \in \dotx \ (\A(a) \land q \leq r)\}$ is dense below $p$ and hence $p \forces^* \A(\dotx)$. For (1), suppose that $p \forces^* \A(\dotx)$ for some $p \in G$. Then there is some $q \in G$ such that $\<b, r> \in \dotx$ for some $r \geq q$ and urelement $b$. Thus, $\dotx_G = b$.\\

\noindent \textit{Case 3}. $\varphi$ is $\dotx_1 = \dotx_2$. For (2), suppose that $\dotx_{1_G} = \dotx_{2_G}$.\\

\textit{Subcase 3.1}. $\dotx_{1_G} = \dotx_{2_G} = b$ for some urelement $b$. Then $\<b, s_1> \in \dotx_1$ and $\<b, s_2> \in \dotx_2$ for some $s_1, s_2 \in G$. Fix some $p \in G$ such that $p \leq s_1, s_2$. Observe first that $p \forces^* \dotx_1 \subseteq \dotx_2$ and $p \forces^* \dotx_2 \subseteq \dotx_1$ trivially hold: for any $\P$-name $\doty$ and $r \in \P$ such that $\<\doty, r> \in \dotx_1 (\text{or } \dotx_2)$, $p$ must be incompatible with $r$ because $r$ is incompatible with $s_1(\text{or }s_2)$. Moreover, $p \forces^* \dotx_1 \Aeq \dotx_2$ because $\{q \in \P : \exists a, r_1, r_2 (\A(a) \land \<a, r_1> \in \dotx_1 \land \<a, r_2> \in \dotx_2 \land  q \leq r_1, r_2 )\}$ is clearly dense below $p$. Hence, $p \forces^* \dotx_1 = \dotx_2$.\\

\textit{Subcase 3.2}. $\dotx_{1_G}$ is a set. We first use a standard text-book argument to show that $p \forces^* \dotx_1 \subseteq \dotx_2$ and $p \forces^* \dotx_2 \subseteq \dotx_1$ for some $p \in G$. Define:
\begin{itemize}
    \item [] $D_1= \{p \in \P : p \forces^* \dotx_1 \subseteq \dotx_2 \land p \forces^* \dotx_2 \subseteq \dotx_1 \}$;
    \item [] $D_2 = \{p \in \P : \exists \<\doty_1, q_1> \in \dotx_1 \ (p \leq q_1 \land \forall \<\doty_2, q_2> \in \dotx_2 \ \forall r \leq q_2 \ (r \forces^* \doty_1 = \doty_2 \rightarrow p \bot r )) \}$;
    \item [] $D_3 = \{p \in \P : \exists \<\doty_2, q_2> \in \dotx_2 \ (p \leq q_2 \land \forall \<\doty_1, q_1> \in \dotx_1 \ \forall r \leq q_1 \  (r \forces^* \doty_2 = \doty_1 \rightarrow p \bot r )) \}$.
\end{itemize}
 If $p \nVdash^* \dotx_1 \subseteq \dotx_2$, then there are $\<\doty_1, q_1> \in \dotx_1$ and $r \leq p, q_1$ such that $r \nforces^* \doty_1 \in \dotx_2$; so there is an $s \leq r$ such that for every $\<\doty_2, q_2> \in \dotx_2$ and $s' \leq q_2$. If $s' \forces^* \doty_1 = \doty_2$, then $s \bot s'$. Hence, $s \leq p$ and $s \in D_2$. Similarly, if $p \nforces^* \dotx_2 \subseteq \dotx_1$, then $p$ will have an extension in $D_3$. This shows that $D_1 \cup D_2 \cup D_3$ is dense. However, $G \cup (D_2 \cup D_3)$ must be empty. Suppose \textit{for reductio} that $p \in G\cap D_2$. Fix some $\<\doty_1, q_1> \in \dotx_1$ with $p \leq q_1$ that witnesses $p \in D_2$. It follows that $\doty_1{_G} = \doty_2{_G}$ for some $\<\doty_2, q_2> \in \dotx_2$ with $q_2 \in G$. By the induction hypothesis, there is some $r \in G$ such that $r \leq q_2$ and $r \forces^* \doty_1 = \doty_2$. But $p$ must be incompatible with such $r$, which is a contradiction. The same argument shows that $G \cap D_3$ is empty. Therefore, there is some $p \in G$ such that $p \forces^* \dotx_1 \subseteq \dotx_2$ and $p \forces^* \dotx_2 \subseteq \dotx_1$.

Now we wish to find some $q \in G$ such that $q \forces^* \dotx_1 \Aeq \dotx_2$. Define:
\begin{itemize}
    \item [] $E_1 = \{ q \in \P : \forall r \leq q \ [\forall \<a_1, s_1> \in \dotx_1 \ (\A(a) \rightarrow r \bot s_1) \land \forall \<a_2, s_2> \in \dotx_2 \ (\A (a_2) \rightarrow r \bot s_2)]\}$;
    \item [] $E_2 = \{ q \in \P : \exists \<a, r> \in \dotx_1 \ (\A(a) \land q \leq r) ) \}$;
    \item [] $E_3 = \{ q \in \P : \exists \<a, r> \in \dotx_2 \ (\A(a) \land q \leq r)\}$.
\end{itemize}
$E_1 \cup E_2 \cup E_3$ is dense. But if there is some $q \in G \cap (E_2 \cup E_3)$, either $\dotx_{1_G}$ or $\dotx_{2_G}$ would be a urelement. Thus there is some $q \in G \cap E_1$ such that the set
\begin{itemize}
    \item [] $\{r \in \P : \forall \<a_1, s_1> \in \dotx_1 \ (\A(a_1) \rightarrow r \bot s_1) \land \forall \<a_2, s_2> \in \dotx_2 \ (\A (a_2) \rightarrow r \bot s_2)\}$
\end{itemize}
is dense below $q$. Therefore, $q \forces^* \dotx_1 \Aeq \dotx_2$. A common extension of $p$ and $q$ in $G$ will then force $\dotx_1 = \dotx_2$.\\

To show that (1) holds for Case 3, suppose that for some $p \in G$, $p \forces^* \dotx_1 = \dotx_2$.\\

\textit{Subcase 3.3}. $\dotx_{1_G} = b$ for some urelement $b$. Then $\<b, r> \in \dotx_1$ for some $r \in G$. Define:
\begin{itemize}
    \item [] $F_1 = \{ q \in \P : \exists a, s_1, s_2 (\A (a) \land \<a, s_1> \in \dotx_1 \land \<a, s_2> \in \dotx_2 \land q \leq s_1, s_2) \}$.
    \item [] $F_2 = \{q \in \P : \forall \<a, s_1> \in \dotx_1 \ (\A(a) \rightarrow q \bot s_1) \land \forall \<a, s_2> \in \dotx_2 \ (\A(a) \rightarrow q \bot s_2 )\}$.
\end{itemize}
Since $p \forces^* \dotx_1 \Aeq \dotx_2$, $F_1 \cup F_2$ is dense below $p$. But clearly $F_2 \cap G$ is empty as $\<b, r> \in \dotx_1$, so there is some $q \in F_1 \cap G$. It follows that $\<b, s_1> \in \dotx_1$ and $\<b, s_2> \in \dotx_2$ for some $s_1, s_2 \in G$. Therefore, $\dotx_{2_G} = b = \dotx_{2_G}$.\\

\textit{Subcase 3.4}. $\dotx_{1_G}$ is a set. Suppose \textit{for reductio} that $\dotx_{2_G}$ is some urelement $b$ and so $\<b, r> \in \dotx_2$ for some $r \in G$. Since $p \forces^* \dotx_1 \Aeq \dotx_2$, it follows that there are some urelement $a$ and  $s \in G$ such that $\<a, s> \in \dotx_1$. This implies that $\dotx_{1_G} = a$, which is a contradiction. Hence, $\dotx_{2_G}$ is a set, so it remains to show that $\dotx_{1_G}$ and $\dotx_{2_G}$ have the same members. If $\doty_G \in \dotx_{1_G}$, then $\<\doty, r> \in \dotx_1$ for some $r \in G$. So there is some $q \in G$ with $q \leq p, r$, and $q \forces^* \doty \in \dotx_2$ because $p \forces^* \dotx_1 \subseteq \dotx_2$. By the induction hypothesis, $\doty_G \in \dotx_{2_G}$. The same argument will show that $\dotx_{2_G} \subseteq \dotx_{1_G}$. \end{proof}
\begin{theorem}
Let $M$ be a countable transitive model of $\ZFUR$ and $\P \in M$ be a forcing poset. Then for every $\dotx_1, ..., \dotx_n \in M^\P$,
\begin{enumerate}
    \item $p \forces^* \varphi(\dotx_1, ..., \dotx_n)$ if and only if $p \forces \varphi(\dotx_1, ..., \dotx_n)$.
    \item For every $M$-generic filter $G$ over $\P$, $M[G] \models \varphi (\dotx_{1_G}, ..., \dotx_{n_G})$ if and only if $\exists p \in G (p \forces \varphi(\dotx_1, ..., \dotx_n))$. 
\end{enumerate}
\end{theorem}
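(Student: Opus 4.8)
The plan is to reduce the theorem to Lemma~\ref{forcinglemma} (the forcing lemma for $\forces^*$), Lemma~\ref{forcingbasic}, and the countability of $M$; the substantive work has already been carried out, so what remains is the familiar bookkeeping that converts the internal relation $\forces^*$ into the semantic relation $\forces$ and extracts the truth lemma.

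First I would establish part~(1). For the direction $p \forces^* \varphi(\dotx_1,\dots,\dotx_n) \Rightarrow p \forces \varphi(\dotx_1,\dots,\dotx_n)$: let $G$ be any $M$-generic filter with $p \in G$; by Lemma~\ref{forcinglemma}(1) we get $M[G] \models \varphi(\dotx_{1_G},\dots,\dotx_{n_G})$, and since $G$ was arbitrary this is precisely the definition of $p \forces \varphi(\dotx_1,\dots,\dotx_n)$. For the converse I argue contrapositively. Suppose $p \nforces^* \varphi(\dotx_1,\dots,\dotx_n)$. By the contrapositive of Lemma~\ref{forcingbasic}(2), the set $\{r \in \P : r \leq p \text{ and } r \forces^* \varphi(\dotx_1,\dots,\dotx_n)\}$ is not dense below $p$, so there is some $q \leq p$ such that no $r \leq q$ satisfies $r \forces^* \varphi(\dotx_1,\dots,\dotx_n)$; by the clause for negation in the definition of $\forces^*$ this means $q \forces^* \neg\varphi(\dotx_1,\dots,\dotx_n)$. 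Since $M$ is countable, fix an $M$-generic filter $G$ with $q \in G$; then $p \in G$ as $G$ is a filter, and Lemma~\ref{forcinglemma}(1) applied to $\neg\varphi$ yields $M[G] \models \neg\varphi(\dotx_{1_G},\dots,\dotx_{n_G})$, so $p \nforces \varphi(\dotx_1,\dots,\dotx_n)$.

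Part~(2) is then immediate. If $M[G] \models \varphi(\dotx_{1_G},\dots,\dotx_{n_G})$, then Lemma~\ref{forcinglemma}(2) provides some $p \in G$ with $p \forces^* \varphi(\dotx_1,\dots,\dotx_n)$, and part~(1) upgrades this to $p \forces \varphi(\dotx_1,\dots,\dotx_n)$. Conversely, if $p \in G$ and $p \forces \varphi(\dotx_1,\dots,\dotx_n)$, then $p \forces^* \varphi(\dotx_1,\dots,\dotx_n)$ by part~(1), and $M[G] \models \varphi(\dotx_{1_G},\dots,\dotx_{n_G})$ by Lemma~\ref{forcinglemma}(1).

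I would close with the usual remark that the definability of forcing over $M$ is already contained in part~(1): the relation $p \forces^* \varphi$ is given by a recursion on the ranks of the $\P$-names occurring in $\varphi$, with a subsidiary recursion on the logical complexity of $\varphi$, and this recursion is carried out entirely inside $M$, so ``$p \forces \varphi$'' is definable over $M$. There is no real obstacle in this theorem; the only non-bookkeeping ingredient is the appeal, in the converse direction of~(1), to the existence of $M$-generic filters containing a prescribed condition, which is exactly where countability of $M$ is used, and all the genuine combinatorics has been absorbed into Lemmas~\ref{forcingbasic} and~\ref{forcinglemma}.
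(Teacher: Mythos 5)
Your proof is correct and is exactly the standard argument the paper invokes: the paper's own proof simply cites the textbook argument (Kunen, Ch.~VII, Theorem 3.6) together with Lemma \ref{forcinglemma}, and your write-up — forward direction via Lemma \ref{forcinglemma}(1), converse via non-density, the negation clause of $\forces^*$, Lemma \ref{forcingbasic}, and countability of $M$ to produce a generic filter through $q$ — is precisely that argument spelled out.
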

\begin{proof}
    By a standard argument as in \cite[Chapter VII. Theorem 3.6]{kunen2014set} using Lemma \ref{forcinglemma}.
\end{proof}

\subsection{Fullness is equivalent to Collection.}
We first verify that $M^\P$ is closed under mixtures.
\begin{lemma}\label{mixlemma}
Let $M$ be a countable transitive model of $\ZFUR$ and $\P \in M$ be a forcing poset. For every function $f: dom(f) \rightarrow M^\P$ in $M$, where $dom(f)$ is an antichain in $\P$, there is a $\dot{v} \in M^\P$ such that $p \forces \dot{v} = f(p)$ for every $p \in dom(f)$.
\end{lemma}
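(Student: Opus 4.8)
The plan is to mimic the classical mixture construction, with the new name condition of Definition \ref{newpnames} forcing one extra check. Given $f\colon dom(f)\to M^\P$ in $M$ with $dom(f)$ an antichain, I would put
\[
\dot{v}=\{\langle y,q\rangle : \exists p\in dom(f)\ \exists r\ (\langle y,r\rangle\in f(p)\ \land\ q\leq p\ \land\ q\leq r)\},
\]
where $y$ ranges over $\P$-names and urelements alike. Every entry $y$ occurring in $\dot{v}$ already occurs in some $f(p)$, so $\dot{v}$ is a set, defined by a formula with parameters $f,\P\in M$; hence $\dot{v}\in M$, and once I check clause (ii) below it will lie in $M^\P$.

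First I would verify that $\dot{v}$ is a $\P$-name, i.e.\ that it satisfies clause (ii) of Definition \ref{newpnames}. Suppose $\langle a,q\rangle\in\dot{v}$ with $a$ a urelement and $\langle y,q'\rangle\in\dot{v}$ with $a\neq y$; fix witnesses $p,r$ for the first pair and $p',r'$ for the second. If $p=p'$, then $\langle a,r\rangle,\langle y,r'\rangle\in f(p)$, so $r\bot r'$ since $f(p)$ is a $\P$-name, and hence $q\bot q'$ as $q\leq r$ and $q'\leq r'$. If $p\neq p'$, then $p\bot p'$ because $dom(f)$ is an antichain, and again $q\bot q'$ since $q\leq p$ and $q'\leq p'$. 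This is the one place the antichain hypothesis is essential, and it is exactly what makes $\dot{v}$ a legitimate $\P$-name; in particular the interpretation $\dot{v}_G$ is unambiguous, since $G$ cannot contain two incompatible conditions.

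It then remains to show $p\forces\dot{v}=f(p)$ for each $p\in dom(f)$; by the definition of $\forces$ it suffices to prove $\dot{v}_G=f(p)_G$ for every $M$-generic $G$ with $p\in G$. If $f(p)_G$ is a urelement $a$, pick $r\in G$ with $\langle a,r\rangle\in f(p)$ and then $q\in G$ with $q\leq p,r$; then $\langle a,q\rangle\in\dot{v}$ and $q\in G$, so $\dot{v}_G=a$. If $f(p)_G$ is a set, I would first check that $\dot{v}_G$ is a set: were $\langle a,q\rangle\in\dot{v}$ with $a$ a urelement and $q\in G$, its witness $p'$ would satisfy $q\leq p'$, hence $p'\in G$, hence $p'=p$ because $dom(f)$ is an antichain and $p\in G$; then its witness $r$ satisfies $q\leq r$, so $r\in G$ and $\langle a,r\rangle\in f(p)$ would force $f(p)_G=a$, a contradiction. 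Granting that both are sets, $\dot{v}_G\subseteq f(p)_G$ follows because any $\langle\doty,q\rangle\in\dot{v}$ with $q\in G$ has its witness $p'$ in $G$, so $p'=p$ and $\langle\doty,r\rangle\in f(p)$ for some $r\in G$; conversely, given $\langle\doty,r\rangle\in f(p)$ with $r\in G$, thinning to some $q\in G$ with $q\leq p,r$ puts $\langle\doty,q\rangle$ into $\dot{v}$, giving $\doty_G\in\dot{v}_G$. I expect the only genuinely new point, compared with the pure mixture, to be the name-condition check of the second paragraph together with ensuring $\dot{v}_G$ is not accidentally read as a urelement in the set case; both reduce to the antichain property of $dom(f)$, and everything else is routine bookkeeping.
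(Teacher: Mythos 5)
Your proposal is correct and follows essentially the same route as the paper: the same mixture name (your $\dot v$ coincides with the paper's up to relettering), the same two-case verification of the incompatibility clause using that each $f(p)$ is a $\P$-name and $dom(f)$ is an antichain, and the same generic-filter argument, with only the immaterial difference that you split cases on whether $f(p)_G$ is a urelement rather than on $\dot v_G$.
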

\begin{proof}
In $M$, we define $\dot{v}$ as follows.
\begin{align*}
    \dotv= \bigcup_{p \in dom(f)}\{\<y, r> \in dom(f(p)) \times \P :  \exists q \  ( \<y, q> \in f(p) \land r \leq p, q )\}.
\end{align*}
We first check that $\dot{v}$ satisfies the incompatbility condition (ii) in Definition \ref{newpnames}. Consider any $\<a, r_1>\in \dot{v}$ for some urelement $a$. Then there are $p_1, q_1$ such that $p_1 \in dom(f)$ and  $\<a, q_1> \in f(p_1)$ and $r_1 \leq p_1, q_1$. For any $\<x, r_2> \in \dot{v}$ with $x \neq a$, there are $p_2, q_2$ such that $p_2 \in dom(f)$ and  $\<x, q_2> \in f(p_2)$ and $r_2 \leq p_2, q_2$. If $p_1 = p_2$, then $r_1$ is incompatible with $r_2$ because $f(p_1)$ is a $\P$-name. If not, $r_1$ is incompatible with $r_2$ because $dom(f)$ is an antichain.

Fix a $p \in dom(f)$. We show that $p \forces \dot{v} = f(p)$. Let $G$ be an $M$-generic filter over $\P$ that contains $p$.

\textit{Case 1}. $\dot{v}_{G}$ is some urelement $a$. Then $\<a, r> \in \dot{v}$ for some $r \in G$. So for some $p' \in dom(f)$ and $q$,  $\<a, q> \in f(p')$ and $r \leq p', q$. So $p', q \in G$ and $p' = p$. Therefore, $\dot{v}_{G} = f(p)_{G}$.

\textit{Case 2}. $\dot{v}_{G}$ is a set. Then $f(p)_{G}$ must be a set. Otherwise, $f(p)_{G}$ is some urelement $a$ and there will be some $q \in G$ such that $\<a, q> \in f(p)$; then there is some $s \in G$ such that $s \leq q, p$ so $\<a, s> \in \dot{v}$, which means $\dot{v}_{G}$ is a urelement----contradiction. For every $ \dot{x}_{G} \in \dot{v}_{G}$ with $\<\dot{x}, r> \in \dot{v}$ and $r \in G$, $\<\dot{x}, q> \in f(p')$ and $r \leq p', q$ for some $p' \in dom(f)$ and $q$; so $p'= p$ and $ \dot{x}_{G} \in f(p)_{G}$. This shows that $\dot{v}_{G} \subseteq f(p)_{G}$. Consider any $\dot{x}_{G} \in f(p)_{G}$ such that $\<\dot{x}, q> \in f(p)$ for some $q \in G$. Let $r \in G$ be a common extension of $p$ and $q$. It follows that $\<\dot{x}, r> \in \dot{v}$ and so $\dot{x}_{G} \in \dot{v}_{G}$. This shows that $f(p)_{G} \subseteq \dot{v}_{G}$ and the proof is completed.
\end{proof}

\begin{theorem}\label{collection<->fullness}
Let $M$ be a countable transitive model of $\ZFCUR$. The following are equivalent.
\begin{enumerate}
    \item  $M \models$ Collection.
    \item For every forcing notion $\P \in M$, its forcing relation $\forces$ is full.
\end{enumerate}
\end{theorem}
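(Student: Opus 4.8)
The plan is to prove the two implications separately. For $(1)\Rightarrow(2)$ I would run the usual fullness argument, using Collection precisely at the point where one must assemble a choice function over a maximal antichain. For $(2)\Rightarrow(1)$ I would argue contrapositively, extracting from a failure of Collection a very simple poset whose forcing relation is visibly not full. Throughout I write $\bar{\dotx}$ for a tuple $\dotx_1,\dots,\dotx_n$ of names in $M^\P$.

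\smallskip
\noindent\emph{$(1)\Rightarrow(2)$.} Assume $M\models$ Collection, fix a poset $\P\in M$, and suppose $p\forces\exists y\,\varphi(y,\bar{\dotx})$. By the forcing theorem, $D=\{q\le p:\exists\dotz\in M^\P\ (q\forces^*\varphi(\dotz,\bar{\dotx}))\}$ is dense below $p$. Using AC in $M$, fix an antichain $A\subseteq D$ maximal among antichains contained in $D$; a standard argument shows $A$ is predense below $p$. The only nontrivial step is to produce a function $f\colon A\to M^\P$ in $M$ with $q\forces^*\varphi(f(q),\bar{\dotx})$ for each $q\in A$. Applying Collection in $M$ to the formula ``$\dotz$ is a $\P$-name $\wedge\ q\forces^*\varphi(\dotz,\bar{\dotx})$'' over the set $A$ yields a set $v\in M$ that meets, for every $q\in A$, the corresponding (possibly proper) class of witnesses; well-ordering $v$ and letting $f(q)$ be the least witness in $v$ gives such an $f$. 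Now apply Lemma \ref{mixlemma} to $f$ to obtain $\dotv\in M^\P$ with $q\forces\dotv=f(q)$ for all $q\in A$. Since also $q\forces\varphi(f(q),\bar{\dotx})$, we get $q\forces\varphi(\dotv,\bar{\dotx})$ for every $q\in A$, and predensity of $A$ below $p$ yields $p\forces\varphi(\dotv,\bar{\dotx})$, establishing fullness.

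\smallskip
\noindent\emph{$(2)\Rightarrow(1)$.} Arguing contrapositively, assume $M\not\models$ Collection. By Corollary \ref{urcharacterization}, both Plenitude and Tail fail in $M$; in particular $\A$ is a proper class in $M$, since otherwise ``$\A$ is a set'' holds and Collection follows by Proposition \ref{weakcollection}. Fix $A\subseteq\A$ with no tail and let $\kappa$ be the least cardinal not realized by any set of urelements disjoint from $A$; as in the proof of Theorem \ref{thm:collection<->losthm} this $\kappa$ exists and is an infinite limit cardinal (were $\kappa=\mu^+$, a set of urelements of size $\mu$ disjoint from $A$ would, using AC, be a tail of $A$). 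Note every ordinal below $\kappa$ is realized by a set of urelements disjoint from $A$. Let $\P\in M$ be the poset consisting of a top element $1_\P$ together with pairwise incompatible minimal conditions $\{p_\lambda:\lambda<\kappa\}$. Every $M$-generic filter on $\P$ has the form $G_\lambda=\{1_\P,p_\lambda\}$ for some $\lambda<\kappa$, and since $G_\lambda\in M$ we have $M[G_\lambda]=M$. Set $\dot\Lambda=\{\<\check\beta,p_\lambda>:\beta<\lambda<\kappa\}\in M^\P$, so that $\dot\Lambda_{G_\lambda}=\lambda$. Because every ordinal $\lambda<\kappa$ is realized disjoint from $A$ inside $M$, one checks $1_\P\forces\exists y\,(y\subseteq\A\wedge y\cap\check A=\emptyset\wedge y\sim\dot\Lambda)$. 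If the forcing relation of $\P$ were full there would be $\doty\in M^\P$ with $1_\P\forces(\doty\subseteq\A\wedge\doty\cap\check A=\emptyset\wedge\doty\sim\dot\Lambda)$; then for each $\lambda<\kappa$, $\doty_{G_\lambda}$ is a set of urelements disjoint from $A$ with $|\doty_{G_\lambda}|=|\lambda|$, while by Lemma \ref{McoversM[G]}(7), $\doty_{G_\lambda}=ker(\doty_{G_\lambda})\subseteq ker(\doty)$, so $\doty_{G_\lambda}\subseteq ker(\doty)\setminus A$. But $ker(\doty)\setminus A\in M$ is disjoint from $A$, so its cardinality is some $\mu^*<\kappa$; taking $\lambda=(\mu^*)^+<\kappa$ yields $(\mu^*)^+=|\doty_{G_\lambda}|\le\mu^*$, a contradiction. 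Hence the forcing relation of $\P$ is not full.

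\smallskip
\noindent The main obstacle, which is also the point of the theorem, is identifying precisely why Collection cannot be weakened to Replacement in $(1)\Rightarrow(2)$: when $\A$ is a proper class, $M^\P$ is a proper class and even the $\P$-names of any fixed rank form a proper class, so the classical device of passing to witnesses of least rank and invoking Replacement to build the choice function $f$ over the antichain $A$ is unavailable, and nothing weaker than Collection closes this gap. A parallel subtlety governs $(2)\Rightarrow(1)$: the existential statement exhibited must be genuinely true in every generic extension — so one cannot merely appeal to the failure of Collection in $M[G]$ — and the poset must avoid collapsing cardinals; the ``flat'' poset above meets both demands, since $M[G]=M$ for each of its generics.
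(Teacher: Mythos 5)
Your proposal is correct, but its second half takes a genuinely different route from the paper's. Your $(1)\Rightarrow(2)$ is essentially the paper's argument: a maximal antichain among conditions below $p$ forcing $\varphi$ of some name, Collection plus a well-ordering to select the witnesses, and Lemma \ref{mixlemma} to mix them (the paper finishes by a reductio against maximality of the antichain rather than by predensity, an immaterial difference). For $(2)\Rightarrow(1)$, however, the paper argues directly: given an instance $\forall x\in w\,\exists y\,\varphi(x,y,u)$, it forms the flat poset $w\cup\{w\}$, the single name $\dot{x}=\{\langle\check z,x\rangle: z\in x\land x\in w\}$ with $\dot{x}_G=x$, applies fullness once to obtain one $\dot{y}$, and then uses $ker(\dot{y}_G)\subseteq ker(\dot{y})$ (Lemma \ref{McoversM[G]}(7)) together with Proposition \ref{weakcollection} to bound all witnesses inside $V(ker(\dot{y}))$. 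You instead prove the contrapositive: from the failure of Collection you invoke Corollary \ref{urcharacterization} to kill Plenitude and Tail, and then build a $\kappa$-branching flat poset and the name $\dot\Lambda$ to exhibit an explicit unfillable existential, exactly in the spirit of the paper's proof of $(2)\Rightarrow(1)$ in Theorem \ref{thm:collection<->losthm}; your cardinality step $((\mu^*)^+\le\mu^*)$ is sound because $\kappa$ is indeed an infinite limit cardinal, and your appeal to Lemma \ref{McoversM[G]}(7) plays the same role as in the paper. The trade-offs: the paper's direct argument is self-contained (only Proposition \ref{weakcollection} and Lemma \ref{McoversM[G]}(7)) and, as the paper remarks immediately after the theorem, uses no choice in $M$, so that direction survives over $\ZFUR$; your route leans on the Section \ref{Section2} hierarchy and on AC (tail/cardinality arguments and the successor-cardinal step), but it produces a concrete counterexample poset and displays the theorem as a sibling of the \L o\'s-theorem characterization. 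Both versions exploit the same two structural facts: the flat poset's generic filters lie in $M$ (so $M[G]=M$), and kernels of values are bounded by kernels of names.
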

\begin{proof}
 (1) $\rightarrow$ (2). This is proved by a standard argument, and the point here is that we can now mix $\P$-names using the new definition. Assume that $M \models$ Collection and we now work in $M$. Fix some $\P$ and suppose that $p \forces \exists y \varphi(y)$ for some $p \in \P$. By AC there exists a maximal antichain $X$ in the subposet $\mathbb{Q} = \{ q \in \P : q \leq p \land \exists \dot{y} \in M^\P q \forces \varphi(\dot{y})\}$. It follows from Collection that there is some $v$ such that for every $q \in X$, there is some name $\dotw \in v$ with $q \forces \varphi(\dotw)$. By well-ordering $v$ we can pick a $\dot{w}_q \in M^\P$ such that $q \forces \varphi(\dot{w}_q)$ for every $q \in X$. Then by lemma \ref{mixlemma}, there is a $\dot{v} \in M^\P$ such that $q \forces \dot{w}_q  = \dot{v}$ for every $q \in X$. Suppose that $p \nVdash \varphi(\dot{v})$ \textit{for reductio}. Then there will be some $r \in \mathbb{Q}$ such that $r \forces \neg \varphi(\dot{v})$, which means $r$ is incompatible with every $q \in X$, but this contradicts the maximality of $X$.\\
 
 \noindent (2) $\rightarrow$ (1). Suppose that $M \models \forall x \in w \exists y \varphi(x, y, u)$. Let $\P$ be the forcing poset $w \cup \{w\}$ such that for every $p, q \in \P$, $p \leq q$ if and only if $p = q$ or $q = w$. That is, $w$ is $1_\P$, while the members of $w$ constitute the only maximal antichain. Thus, $M[G] = M$ for every generic filter $G$ over $\P$. Define $\dot{x} \in M^\P$ to be $\{ \<\check{z}, x> : z \in x \land x \in w\}$. For every $x \in w$ and generic filter $G$ containing $x$, since $\dot{x}_{G} = x$ it follows that $M[G] \models \exists y \varphi (\dot{x}_{G}, y, u)$. Thus, $1_\P \forces \exists y \varphi (\dot{x}, y, \check{u})$ and by (2), $1_\P \forces \varphi (\dot{x}, \doty, \check{u})$ for some $\doty \in M^\P$. For every $x \in w$, let $G$ be the generic filter containing $x$. Then $M[G] \models \varphi (x, \doty_{G}, u)$; so $M \models \varphi(x,  \doty_{G}, u)$ and $ker(\doty_{G}) \subseteq ker(\doty)$ by Lemma \ref{McoversM[G]} (7). It follows that $M \models \forall x \in w \ \exists y \in V(ker(\doty)) \ \varphi(x, y, u)$, which suffices for Collection by Proposition \ref{weakcollection}.\end{proof}
 \noindent While the argument for (2) $\rightarrow$ (1) in Theorem \ref{collection<->fullness} does not rely on the assumption that $M \models $ AC, the use of AC in (1) $\rightarrow$ (2) is essential (see \cite[Corollary 62.1]{yao2023set}).

\subsection{Axiom preservation}\label{section3.4}
\begin{lemma}\label{forcingpreservesZFCU}
Let $M$ be a countable transitive model of $\ZFUR$, $\P \in M$ be a forcing poset, and $G$ be an $M$-generic filter over $\P$. Then
\begin{enumerate}
    \item $M[G]$ is a countable transitive model of ZU (Definition \ref{def:ZUandZFU}).
    \item $M[G] \models $ AC if $M \models $ AC.
    \item $M[G] \models$ Collection if $M \models$ Collection.
\end{enumerate}
\end{lemma}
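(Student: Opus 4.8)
The plan is to adapt the textbook forcing arguments (\cite[Ch.~VII]{kunen2014set}), tracking two urelement-specific features throughout. First, by Lemma~\ref{McoversM[G]}(6) the urelements of $M[G]$ are exactly those of $M$, and $\dot x_G$ is a urelement precisely when $\langle a, p\rangle\in\dot x$ for some urelement $a$ and some $p\in G$; such an $\dot x_G$ has no members, so the instances of Union, Powerset and Separation applied to it are trivial. Second -- and here the new Definition~\ref{newpnames} matters -- every name I construct below will be a name for a \emph{set}, built using only pairs whose first coordinate is itself a $\P$-name, so that the incompatibility clause~(ii) of Definition~\ref{newpnames} is satisfied vacuously; carrying out this bookkeeping for each construction is the one point in part~(1) that needs care.

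For part~(1): countability, transitivity and $M\subseteq M[G]$ are given by Lemma~\ref{McoversM[G]}; Foundation holds because $\in$ is genuinely well-founded on $M[G]$; Axiom~$\A$ holds because urelements of $M[G]$ are urelements of $V$; Extensionality and Infinity follow from transitivity together with absoluteness of the relevant $\Delta_0$ notions (the Infinity-witness of $M$ lies in $M[G]$ by Lemma~\ref{McoversM[G]}(1)). For Pairing I would take $\{\langle\dot x,1_\P\rangle,\langle\dot y,1_\P\rangle\}$. For Union and Separation I would use the usual names with first coordinates ranging over the set of $\P$-names appearing in the given name; for Powerset the standard nice-name construction gives a name $\dot p$ with $\dot p_G\supseteq P^{M[G]}(\dot x_G)$, after which $P^{M[G]}(\dot x_G)=\{z\in\dot p_G: Set(z)\land z\subseteq\dot x_G\}$ is a set by Separation. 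In each case one checks, using that $\dot x$ is a name, that the construction still yields a correct name when $\dot x_G$ happens to be a urelement.

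For part~(2), assuming $M\models$ AC, given a set $\dot y_G\in M[G]$ I would well-order $D=\{\dot w\in M^\P:\langle\dot w,q\rangle\in\dot y\text{ for some }q\}$ in $M$ as $\langle\dot w_\xi:\xi<\lambda\rangle$, form in $M$ a name $\dot f$ for $\xi\mapsto(\dot w_\xi)_G$ (using canonical ordered-pair names and the checks $\check\xi$), and observe that $\dot f_G\in M[G]$ is a surjection of $\lambda$ onto a superset of $\dot y_G$. Then $b\mapsto\min\{\xi<\lambda:\dot f_G(\xi)=b\}$ is an injection of $\dot y_G$ into $\lambda$ lying in $M[G]$ by Separation (applied inside $M[G]$, which has Pairing, Union and Powerset by~(1)), so $\dot y_G$ is well-orderable in $M[G]$. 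No choice beyond the hypothesis $M\models$ AC enters here.

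For part~(3), assuming $M\models$ Collection, suppose $M[G]\models\forall x\in\dot w_G\,\exists y\,\varphi(x,y,\dot u_G)$. Working in $M$: for each pair $(\dot x,p)$ with $\dot x$ a name appearing in $\dot w$, $p\in\P$, and $p\forces^*\exists y\,\varphi(\dot x,y,\dot u)$, clause~(8) of the definition of $\forces^*$ supplies $r\le p$ and $\dot y\in M^\P$ with $r\forces^*\varphi(\dot x,\dot y,\dot u)$; the set of such pairs exists by Separation (since $\forces^*$ is definable in $M$), so Collection in $M$ yields a set $N\subseteq M^\P$ in $M$ meeting this existential for every such pair, and I would set $\dot v=\{\langle\dot y,1_\P\rangle:\dot y\in N\}$. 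To check $\dot v$ works, take $x=\dot x_G\in\dot w_G$ with $\langle\dot x,q_0\rangle\in\dot w$, $q_0\in G$, $\dot x$ a name; by Lemma~\ref{forcinglemma} some $p\in G$ has $p\forces^*\exists y\,\varphi(\dot x,y,\dot u)$, and since $\{r:\exists\dot z\in M^\P\ r\forces^*\varphi(\dot x,\dot z,\dot u)\}$ is dense below $p$, genericity gives $r\in G$ in it, whence $(\dot x,r)$ is one of our pairs and some $\dot y\in N$ has $r\forces^*\varphi(\dot x,\dot y,\dot u)$; thus $M[G]\models\varphi(\dot x_G,\dot y_G,\dot u_G)$ with $\dot y_G\in\dot v_G$. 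The main obstacle is exactly this last step: without AC in $M$ one cannot pick witnessing names along a maximal antichain, so I instead use Collection in $M$ to gather \emph{all} candidate witnesses into a single set and let genericity select a usable one -- which is why part~(3) needs no choice, matching its hypothesis. That manoeuvre, together with the clause-(ii) bookkeeping in part~(1), are the only non-routine points.
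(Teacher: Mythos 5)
Parts (1) and (2) of your proposal follow the same textbook route the paper cites, and your bookkeeping around clause (ii) of Definition \ref{newpnames} (building only set-names whose domains consist of $\P$-names, so the incompatibility condition is vacuous) is exactly the right adjustment. The problem is the last step of part (3). Your application of Collection produces a set $N$ with the property: for every pair $(\dotx,p)$ with $p\forces^*\exists y\,\varphi(\dotx,y,\dotu)$, there exist \emph{some} $r'\le p$ and some $\doty\in N$ with $r'\forces^*\varphi(\dotx,\doty,\dotu)$. In the verification you take $r\in G$ in the dense set $\{r:\exists\dotz\in M^\P\ r\forces^*\varphi(\dotx,\dotz,\dotu)\}$ and then assert that some $\doty\in N$ has $r\forces^*\varphi(\dotx,\doty,\dotu)$. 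That does not follow: applied to the pair $(\dotx,r)$, your Collection property only yields a witness in $N$ forced by some $r'\le r$, and such an $r'$ need not lie in $G$; nothing guarantees that the $\dotz$ you obtained from density belongs to $N$, nor that any member of $N$ is forced by $r$ itself. (Demanding a single witness forced by the given condition is precisely fullness, which is not available here.)

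The repair is short, and the paper's own argument avoids the issue by a different quantifier arrangement: it collects over $x=\{\langle\dotx,p\rangle\in(dom(\dotw)\cap M^\P)\times\P:\exists\doty\in M^\P\ p\forces\varphi(\dotx,\doty,\dotu)\}$, i.e.\ over pairs for which a witness is forced by \emph{that very} $p$, so Collection gives $v$ with: each such $p$ forces $\varphi(\dotx,\doty,\dotu)$ for some $\doty\in v$. In the verification one then obtains such a $p\in G$ directly from the forcing theorem applied to an actual witness $\doty'_G$ in $M[G]$, so the condition forcing the collected witness is already in $G$. Alternatively, keep your $N$ but replace your dense set by $D'=\{r\le p:\exists\doty\in N\ r\forces^*\varphi(\dotx,\doty,\dotu)\}$: for any $q\le p$ we have $q\forces^*\exists y\,\varphi(\dotx,y,\dotu)$, so your Collection property hands you some $r'\le q$ lying in $D'$; hence $D'$ is dense below $p$, genericity gives $r\in G\cap D'$, and the argument closes. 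With either patch your proof coincides with the paper's; the rest of your write-up is essentially the paper's intended argument.
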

\begin{proof}
The proofs of (1) and (2) are standard text-book arguments as in Kunen \cite[Ch.VII]{kunen2014set}. For (3), suppose that $M[G] \models \forall v \in \dotw_G\ \exists y \varphi(v, y, \dot{u}_G)$ for some $\dotw_G$ and $\dot{u}_G$. In $M$, define 
\begin{align*}
    x = \{\langle \dotx, p \rangle \in (dom(\dotw) \cap M^\P) \times \P :  \exists \dot{y} \in M^\P p \forces \varphi(\dotx, \dot{y}, \dot{u}) \}.
\end{align*}
By Collection in $M$, there is a set of $\P$-names $v$ such that for every $\langle \dotx, p \rangle \in x$, there is a $\dot{y} \in v$ with $p \forces \varphi (\dotx, \dot{y}, \dot{u})$. Define $\dot{v}$ to be $v \times \{1_\P\}$. It's routine to check that $M[G] \models \forall x \in \dot{w}_G \ \exists y \in \dot{v}_G \ \varphi(x, y, \dot{u}_G)$.
\end{proof}
A more difficult question is whether forcing preserves Replacement. When $M$ is a model of ZF, the standard argument for $M[G] \models $ Replacement appeals to Collection in $M$, which is not available for us here. A new argument is thus needed.

\begin{definition}\label{purification}
Let $M$ and $\P$ be as before and $A \in M$ be a set of urelements. For every urelement $a \in M$, let $\overset{A}{a} = a$. For every $\dotx \in M^\P$, we define $\overset{A}{\dot{x}}$ (the $A$\textit{-purification of }$\dotx$) as follows.
\begin{align*}
   \overset{A}{\dot{x}} = \{\langle \overset{A}{y}, p \rangle : \langle y , p \rangle \in \dot{x} \land ( y \in M^\P \lor y \in A) \}. 
\end{align*}
\end{definition}
\noindent That is, $\overset{A}{\dot{x}}$ is obtained by hereditarily throwing out the urelements used to build $\dot{x}$ that are not in $A$.
\begin{prop}
Let $A \in M$ be a set of urelements such that $ker(\P) \subseteq A$. For every $\dotx \in M^\P$, $\overset{A}{\dot{x}} \in M^\P$ and $ker(\overset{A}{\dot{x}}) \subseteq A$.
\end{prop}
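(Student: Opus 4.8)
The plan is to prove both assertions simultaneously by induction on the rank of $\dotx$ (as a set in $M$), using the recursive definition of $A$-purification. The statement to establish is: for every $\dotx \in M^\P$, the object $\overset{A}{\dotx}$ is again a legitimate $\P$-name (i.e. satisfies clauses (i) and (ii) of Definition \ref{newpnames}), and $ker(\overset{A}{\dotx}) \subseteq A$.

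First I would unwind the definition: $\overset{A}{\dotx} = \{\langle \overset{A}{y}, p\rangle : \langle y, p\rangle \in \dotx \land (y \in M^\P \lor y \in A)\}$, so every first coordinate of a pair in $\overset{A}{\dotx}$ is either some urelement $a \in A$ (when $y = a \in A$, noting $\overset{A}{a} = a$) or is $\overset{A}{y}$ for some $\P$-name $y = \dot{y}$ of strictly lower rank. By the induction hypothesis, each such $\overset{A}{\dot{y}}$ is a $\P$-name with $ker(\overset{A}{\dot{y}}) \subseteq A$. To check clause (i) of Definition \ref{newpnames}, I note every pair $\langle z, p\rangle \in \overset{A}{\dotx}$ has $p \in \P$ and $z$ either a urelement or a $\P$-name, which is exactly what (i) demands. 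To check clause (ii) — that whenever $\langle a, p\rangle, \langle z, q\rangle \in \overset{A}{\dotx}$ with $a$ a urelement and $a \neq z$, then $p \perp q$ — I would trace these pairs back to pairs $\langle a, p\rangle$ and $\langle y, q\rangle$ in $\dotx$ (here $a \in A$ and $\overset{A}{y} = z$). Since $a$ is a urelement already in $A$, the original pair $\langle a, p\rangle$ lies in $\dotx$ itself. If $y$ were equal to $a$ then $z = \overset{A}{a} = a$, contradicting $a \neq z$; hence $y \neq a$, and since $\dotx$ is a $\P$-name, clause (ii) applied to $\dotx$ gives $p \perp q$. This establishes $\overset{A}{\dotx} \in M^\P$ (absoluteness of the construction and $M \models \ZFUR$ ensures $\overset{A}{\dotx} \in M$).

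For the kernel bound, I would argue $ker(\overset{A}{\dotx}) \subseteq A$ as follows: the urelements appearing in the transitive closure of $\{\overset{A}{\dotx}\}$ come from three sources — urelements $a \in A$ used directly as first coordinates (these are in $A$ by construction), the urelement $ker$ of $\P$-names $\overset{A}{\dot{y}}$ appearing as first coordinates (these are $\subseteq A$ by the induction hypothesis), and possibly urelements occurring inside the conditions $p \in \P$; but $ker(\P) \subseteq A$ by hypothesis, so those too land in $A$. Hence $ker(\overset{A}{\dotx}) \subseteq A$. I expect the only mildly delicate point to be bookkeeping the contribution of the forcing conditions $p$ to the kernel — one must remember that $\P$ itself may involve urelements, which is precisely why the hypothesis $ker(\P) \subseteq A$ is imposed — but this is a routine observation rather than a genuine obstacle, and the entire proof is a straightforward induction.
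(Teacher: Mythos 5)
Your proposal is correct and follows essentially the same route as the paper: induction on the rank of $\dotx$, verifying the incompatibility clause of Definition \ref{newpnames} by tracing pairs in $\overset{A}{\dotx}$ back to pairs in $\dotx$ (using that a purified name is a set, so the urelement pair $\langle a,p\rangle$ must already occur in $\dotx$ and clause (ii) for $\dotx$ applies), and bounding the kernel by $\bigcup_{y \in dom(\dotx)} ker(\overset{A}{y}) \cup ker(\P) \subseteq A$ via the induction hypothesis and the assumption $ker(\P) \subseteq A$. No gaps.
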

\begin{proof}
By induction on the rank of $\dotx$. To show that $\overset{A}{\dot{x}}$ is always a $\P$-name, we only need to check the incompatibility condition in Defnition \ref{newpnames} holds. Suppose that $\<a, p>, \<y, q> \in \overset{A}{\dot{x}}$, where $a$ is a urelement and $y \neq a$. If $y$ is another urelement in $dom(\dotx)$, then $p$ and $q$ are incompatible; otherwise $y$ is some $\overset{A}{\dotz}$, where $\<\dotz, q> \in \dotx$ and $\dotz$ is a $\P$-name, then $p$ and $q$ are incompatible because no urelement is a $\P$-name. $ker(\overset{A}{\dot{x}}) \subseteq A$ because $ker(\overset{A}{\dot{x}})$ is contained in $\bigcup_{y \in dom(\dotx)}ker(\overset{A}{y}) \cup ker(\P)$, which is a subset of $A$ by the induction hypothesis.
\end{proof}

\begin{theorem}\label{forcingpreservesreplacement}
Let $M$ be a countable transitive model of $\ZFUR$, $\P \in M$ be a forcing poset and $G$ be $M$-generic over $\P$. Then $M[G] \models$ Replacement.
\end{theorem}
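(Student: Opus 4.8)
The plan is to reduce Replacement in $M[G]$ to a uniform naming-and-rank bound that can be carried out inside $M$, using $A$-purification (Definition~\ref{purification}) together with the Replacement of $M$ to compensate for the missing Collection. First dispose of the trivial case: if $\A$ is a set in $M$, then $M\models$ Collection by Proposition~\ref{weakcollection}, hence $M[G]\models$ Collection by Lemma~\ref{forcingpreservesZFCU}, and Collection yields Replacement at once; so assume $\A$ is a proper class in $M$, hence also in $M[G]$ by Lemma~\ref{McoversM[G]}. Since $M[G]\models$ ZU by Lemma~\ref{forcingpreservesZFCU}, it suffices, given $\dotw,\dotu\in M^\P$ and $p_0\in G$ with $p_0\forces\forall x\in\dotw\,\exists! y\,\varphi(x,y,\dotu)$, to produce a single $\dotv\in M^\P$ whose interpretation $\dotv_G$ contains the $\varphi(\cdot,\cdot,\dotu_G)$-image of every $x\in\dotw_G$; Separation in $M[G]$ then extracts the required set $v=\{z\in\dotv_G:\exists x\in\dotw_G\,\varphi^{M[G]}(x,z,\dotu_G)\}$.

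The crucial step is a \emph{kernel bound}. Put $A^*:=ker(\P)\cup ker(\dotw)\cup ker(\dotu)$, a set of urelements of $M$. I claim $p_0\forces\forall x\in\dotw\,\forall y\,(\varphi(x,y,\dotu)\to ker(y)\subseteq A^*)$. If not, refining we find $p\le p_0$ and names $\dotx\in dom(\dotw)$, $\dotz$ with $p\forces\dotx\in\dotw\wedge\varphi(\dotx,\dotz,\dotu)\wedge\check c\in ker(\dotz)$ for a genuine urelement $c\notin A^*$ of $M$ (absoluteness of checks forces $c$ to be really outside $A^*$). Since $\A$ is a proper class in $M$, pick $c'\in\A^M\setminus(A^*\cup ker(\dotz))$ and let $\pi$ be the automorphism of $M$ transposing $c$ and $c'$. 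Because $ker(\P)\cup ker(\dotw)\cup ker(\dotu)\cup ker(\dotx)\subseteq A^*$ avoids $c$ and $c'$, $\pi$ fixes $\P$, $\dotw$, $\dotu$, $\dotx$, and fixes $p$; hence $p=\pi p\forces\varphi(\dotx,\pi\dotz,\dotu)$, while $p\forces\pi\dotz\ne\dotz$ since $\pi\check c=\check{c'}$ gives $p\forces\check{c'}\in ker(\pi\dotz)$ whereas $1_\P\forces\check{c'}\notin ker(\dotz)$. Thus $p\le p_0$ forces two distinct $\varphi(\cdot,\cdot,\dotu)$-images of $\dotx\in\dotw$, contradicting $p_0\forces\exists! y\,\varphi(\dotx,y,\dotu)$. (Equivalently, one can run this homogeneity argument directly in $M[G]$ on $x,y,u$.) Consequently every $\varphi$-image $y$ of an $x\in\dotw_G$ satisfies $ker(y)\subseteq A^*$.

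Next, obtain \emph{small-kernel names} for these images. For $y\in M[G]$ with $ker(y)\subseteq A^*$, take any $\dotz\in M^\P$ with $\dotz_G=y$ and form $\overset{A^*}{\dot z}$ as in Definition~\ref{purification}; by the Proposition following that definition, $\overset{A^*}{\dot z}\in M^\P$ and $ker(\overset{A^*}{\dot z})\subseteq A^*$. An easy induction on the $M$-rank of $\dotz$ shows $\overset{A^*}{\dot z}_G=y$: every member of $y$ again has kernel inside $A^*$, so the inductive hypothesis applies to the surviving sub-names, and none of the urelement-pairs discarded by purification can lie in $G$, since $\dotz_G=y$ is not a urelement outside $A^*$. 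Hence every image is named by some $\P$-name whose kernel is contained in $A^*$.

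Finally, \emph{bound the ranks inside $M$}. Working in $M$ with $p_0$ fixed, let
\[
D=\{(\dotx,r):\dotx\in dom(\dotw),\ r\le p_0,\ \exists q\,(\langle\dotx,q\rangle\in\dotw\wedge r\le q),\ \exists\dotz\in M^\P\,(ker(\dotz)\subseteq A^*\wedge r\forces\varphi(\dotx,\dotz,\dotu))\},
\]
a set by Separation in $M$. For $(\dotx,r)\in D$ let $\alpha_{(\dotx,r)}$ be least such that some witnessing $\dotz$ as above has $\textup{rank}_M(\dotz)<\alpha_{(\dotx,r)}$; this is a definable operation on the set $D$, so by Replacement in $M$ there is an ordinal $\beta$ with $\alpha_{(\dotx,r)}<\beta$ for all $(\dotx,r)\in D$. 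Put $N=\{\dotz\in M^\P:ker(\dotz)\subseteq A^*\wedge\textup{rank}_M(\dotz)<\beta\}$, a set in $M$ by Replacement and Powerset, and $\dotv=N\times\{1_\P\}\in M^\P$. Given $x=\dotx_G\in\dotw_G$ with $\langle\dotx,q\rangle\in\dotw$, $q\in G$, and its image $y$: by the previous two steps $y=\dotz_G$ for some $\dotz\in M^\P$ with $ker(\dotz)\subseteq A^*$, and by the forcing theorem some $r\in G$ with $r\le q,p_0$ forces $\varphi(\dotx,\dotz,\dotu)$; thus $(\dotx,r)\in D$, so there is a witness $\dotz^*\in N$ with $r\forces\varphi(\dotx,\dotz^*,\dotu)$. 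Then $M[G]\models\varphi(x,\dotz^*_G,\dotu_G)$, so $\dotz^*_G=y$ by uniqueness, and $\dotz^*_G\in\dotv_G$. Hence $\dotv_G$ contains every image, completing the proof that $M[G]\models$ Replacement. The main obstacle is the kernel-bound step: seeing that the uniqueness hypothesis of Replacement forces images to have kernels bounded by the parameters (this is exactly what fails for non-functional relations, matching that Collection can genuinely fail over $\ZFUR$), and arranging the automorphism so that the condition $p$ is left fixed; the subsequent rank bound is where the full strength of Replacement in $M$ enters, and it is usable only because purification has first confined all relevant kernels to the single set $A^*$.
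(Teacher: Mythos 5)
Your proof is correct, and its overall skeleton coincides with the paper's: dispose of the case where Collection already holds, so that $\A$ is a proper class in $M$; establish a kernel bound on the $\varphi$-witnesses by a urelement-swapping automorphism that fixes $\P,\dotw,\dotu$ and the condition, combined with the uniqueness hypothesis and $ker(\dotz_H)\subseteq ker(\dotz)$; use $A$-purification to obtain names of the witnesses with kernel inside $A^*$; and finally bound the ranks of such names inside $M$ using Replacement and the $V_\alpha(A)$-hierarchy (your $D$, $\alpha_{(\dotx,r)}$, $\beta$ and $N\times\{1_\P\}$ are exactly the paper's $\bar{x}$, $\alpha_{\dot v,p}$, $\beta$ and $\rho$ in the proof of Theorem~\ref{forcingpreservesreplacement}). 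Where you genuinely diverge is the middle step. The paper proves the kernel bound for the canonical unique-witness name $\mu$ (Claim~\ref{claim1}) and then establishes $p\forces\overset{A}{\mu}=\mu$ via Claims~\ref{claim2} and~\ref{claim3}, i.e.\ a well-definedness claim for the purification map on $TC(\{\mu_H\})$ followed by an $\in$-isomorphism argument; you instead prove the semantic statement that $p_0$ forces \emph{every} witness to have kernel contained in $A^*$, and then show by a direct induction on the rank of an arbitrary $\dotz\in M^\P$ that $ker(\dotz_G)\subseteq A^*$ implies $\overset{A^*}{\dotz}_G=\dotz_G$: the sub-names with conditions in $G$ denote members of $\dotz_G$ and hence inherit the kernel hypothesis, so the induction hypothesis applies to them, while no discarded urelement pair can have its condition in $G$. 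This induction is sound and is arguably more elementary than the paper's Claims~\ref{claim2}--\ref{claim3}. The only trade-off is that you recover the needed forcing statement $r\forces\varphi(\dotx,\dotz^*,\dotu)$ a posteriori through the forcing theorem applied in $M[G]$ (taking $r\in G$ below $q$, $p_0$ and a condition forcing $\varphi(\dotx,\overset{A^*}{\dotz},\dotu)$), rather than obtaining $p\forces\varphi(\dot v,\overset{A}{\mu},\dotu)$ directly as in Lemma~\ref{keylemmarep}; this is harmless, since the final bounding argument only needs, for each element of $\dotw_G$, the existence of some pair in $D$ witnessed by a small-kernel name, which your argument supplies.
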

\begin{proof}
Suppose that $M[G] \models \forall v \in \dot{w}_G \exists ! y \varphi(v, y, \dot{u}_G)$. Let $A = ker(\dot{w}) \cup ker(\P) \cup ker(\dot{u})$. By Lemma \ref{forcingpreservesZFCU}, we may assume $M$ does not satisfy Collection so it has a proper class of urelements by Proposition \ref{weakcollection}.

\begin{lemma}\label{keylemmarep}
For every $\dot{v}_G \in \dot{w}_G$, there exist $p \in G$ and $\mu' \in M^{\P}$ such that $p \forces \varphi(\dot{v}, \mu', \dot{u})$ and $ker(\mu') \subseteq A$.
\end{lemma}
\begin{proof}
Fix a $\dot{v}_G \in \dot{w}_G$ for some $\dot{v} \in dom (\dot{w}) \cap M^\P$. There is a $\P$-name $\mu$ and a $p \in G$ such that $ p \forces \varphi (\dot{v}, \mu, \dot{u}) \land \forall z (\varphi (\dot{v}, z, \dot{u}) \rightarrow \mu = z)$.
\begin{claim}\label{claim1}
 For every $M$-generic filter $H$ over $\P$ such that $p \in H$, $ker(\mu_H) \subseteq A$.
\end{claim}
\begin{claimproof}
Suppose not. Then there is some $b \in ker(\mu_H) - A$. Since $M$ has a proper class of urelements, there is some urelement $c \in M$ such that $c \notin A \cup ker(\mu)$. In $M$, let $\pi$ be an automorphism that only swaps $b$ and $c$. Since $\pi$ point-wise fixes $A$, it follows that 
\begin{align*}
    p \forces \varphi (\dot{v}, \pi \mu, \dot{u}) \land \forall z (\varphi (\dot{v}, z, \dot{u}) \rightarrow \pi \mu = z).
\end{align*}
Thus, $M[H] \models \mu_H = (\pi\mu)_H$. Since $b \in ker(\mu_H)$, $\pi b  \in ker(\pi\mu_H)$; but $\pi b = c \notin ker(\mu)$ and $ker(\mu_H) \subseteq ker(\mu)$, so $\pi b \notin ker(\mu_H)$, which is a contradiction.
\end{claimproof}

\noindent Note that we cannot hope to show that $ker(\mu) \subseteq A$ in general. For if $\mu^*$ is some $\P$-name such that  $\mu^* = \mu \cup \{\langle \{\<b, 1_\P>\}, q \rangle\}$, where $b$ is a urelement not in $A$ and $q$ is incompatible with $p$, we would still have $p \forces \mu = \mu^*$.

\begin{claim}\label{claim2}
Let $H$ be an $M$-generic filter over $\P$ such that $p \in H$. For every $\dot{x}, \dot{y} \in M^{\P}$, if $\dot{x}_H, \dot{y}_H \in TC(\{\mu_H\})$, then $\dot{x}_H = \dot{y}_H$ if and only if $(\overset{A}{\dot{x})}_H = (\overset{A}{\dot{y}})_H$.
\end{claim}
\begin{claimproof}
If $\dotx_H = \doty_H = a$ for some urelement $a$, then by Claim \ref{claim1} $a \in A$. It is easy to check that $(\overset{A}{\dot{y})}_H = (\overset{A}{\dot{x}})_H = a$. If $(\overset{A}{\dot{y})}_H = (\overset{A}{\dot{x}})_H = b$ for some urelement $b$, then $b \in A$ and it follows that $\dotx_H = \doty_H = b$.

So suppose $\dotx_H = \doty_H$ are sets in $TC(\{\mu_H\})$ and the claim holds for every $\dot{z} \in dom(\dot{x}) \cup dom(\dot{y})$. Clearly, $(\overset{A}{\dot{x})}_H$ and $(\overset{A}{\dot{y}})_H$ must also be sets. If $\overset{A}{\dot{z}}_H \in \overset{A}{\dot{x}}_H$ for some $\dotz \in M^\P \cap dom(\dotx)$, we have $\dot{z}_H \in \dot{y}_H = \dot{x}_H$. So there is some $\dot{w} \in M^\P \cap dom(\doty)$ such that $\dot{w}_H = \dot{z}_H$. $\dot{z}_H \in TC(\{\mu_H\})$ so by the induction hypothesis $\overset{A}{\dot{z}}_H = \overset{A}{\dot{w}}_H \in (\overset{A}{\dot{y}})_H$. This shows that $\overset{A}{\dot{x}}_H \subseteq \overset{A}{\dot{y}}_H$, and we will have $\overset{A}{\dot{x}}_H = \overset{A}{\dot{y}}_H$ by the same argument.

Now suppose that $\dotx_H, \doty_H \in TC(\{\mu_H\})$ and $\overset{A}{\dot{x}}_H = \overset{A}{\dot{y}}_H$ are sets. Then $\dotx_H$ and $\doty_H$ must be sets. For if, say, $\dotx_H = a$ for some urelement $a$, then $a \in A$ by Claim \ref{claim1}, which would yield $\overset{A}{\dot{x}}_H = a$. Let $\dot{z}_H \in \dot{x}_H$ for some $\dotz \in M^\P \cap dom(\dotx)$. Then $\overset{A}{\dot{z}}_H \in \overset{A}{\dot{y}}_H$ and so $\overset{A}{\dot{z}}_H = \overset{A}{\dot{w}}_H$ for some $\dot{w}_H \in \dot{y}_H$. By the induction hypothesis, it follows that $\dot{z}_H = \dot{w}_H$. This shows that $\dot{x}_H \subseteq \dot{y}_H$ and consequently, $\dot{x}_H = \dot{y}_H$.
\end{claimproof}

\begin{claim}\label{claim3}
$p \forces \overset{A}{\mu} = \mu$.
\end{claim}
\begin{claimproof}
 Let $H$ be an $M$-generic filter on $\P$ that contains $p$. We show that $\overset{A}{\mu}_H = \mu_H$. Let $f$ be the function on $TC(\{\mu_H\})$ that sends every $\dot{y}_H$ to $\overset{A}{\dot{y}}_H$, which is is well-defined by Claim \ref{claim2}. Note that every $\in$- isomorphism of transitive sets that fixes the urelements can only be the identity map. So it suffices to show that $f$ maps $TC(\{\mu_H\})$ onto $TC(\{\overset{A}{\mu}_H\})$, preserves $\in$ and fixes all the urelements.

\textit{ $f$ preserves $\in$}. Consider any $\dot{y}{_H}, \dot{x}{_H} \in TC(\{\mu_H\})$. Suppose that $\dot{y}{_H} \in \dot{x}{_H}$. Then $\dot{y}{_H} = \dot{z}_H$ for some $\dot{z} \in M^\P \cap dom(\dot{x})$ so $ \overset{A}{\dot{z}}_H \in \overset{A}{\dot{x}}_H$; by Claim \ref{claim2}, it follows that $\overset{A}{\dot{y}}_H = \overset{A}{\dot{z}}_H \in \overset{A}{\dot{x}}_H$. Suppose that $\overset{A}{\dot{y}}_H \in \overset{A}{\dot{x}}_H$. Then $\overset{A}{\dot{y}}_H = \overset{A}{\dot{z}}_H$ for some $\dot{z}_H \in \dot{x}_{H}$ so $\dot{y}{_H} = \dot{z}_H \in \dot{x}_{H}$ by Claim \ref{claim2} again.

\textit{$f$ maps $TC(\{\mu_H\})$ onto $TC(\{\overset{A}{\mu}_H\})$}. If $\dot{y}_H \in TC(\{\mu_H\})$, then $\dot{y}_H \in \dot{y}_{1_H} \in ... \in \dot{y}_{n_H} \in \mu_H$ for some $n$. Since $f$ is $\in$-preserving, it follows that $\overset{A}{\dot{y}}_H \in \overset{A}{\dot{y}_1}_H \in ... \in \overset{A}{\dot{y}_n}_H \in \overset{A}{\mu}_H$ and hence $\overset{A}{\dot{y}}_H \in TC(\{{\overset{A}{\mu} }_H\})$. To see it is onto, let $x \in x_1 \in ... \in x_n \in \overset{A}{\mu}_H$. Then $x = \overset{A}{\dot{y}}_H \in \overset{A}{\dot{y}_1}_H  \in ... \in \overset{A}{\dot{y}_n}_H \in \overset{A}{\mu}_H$, but then $\dot{y}_H \in \dot{y}_{1_H} \in ... \in \dot{y}_{n_H} \in \mu_H$ and hence  $\dot{y}_H \in TC(\{\mu_H\})$.

\textit{$f$ fixes all the urelements in  $TC(\{\mu_H\})$.} Suppose $\dotx_H = a \in TC(\{\mu_H\})$ for some urelement $a$. Then by Claim \ref{claim1}, $a \in A$ and hence $\overset{A}{\dot{x}}_H = a$. \end{claimproof}

\noindent Lemma \ref{keylemmarep} is now proved by letting $\mu'$ be $\overset{A}{\mu}$.
\end{proof}
Now in M, we define
\begin{align*}
    \bar{x} = \{\langle \dot{v}, p\rangle \in (dom(\dot{w}) \cap M^\P) \times \P : \exists \mu \in M^{\P} (ker(\mu) \subseteq A \land p \forces \varphi(\dot{v}, \mu, \dot{u})) \}.
\end{align*}
For every $\langle \dot{v}, p \rangle \in \bar{x}$, let $\alpha_{ \dot{v}, p }$ be the least $\alpha$ such that there is some $\mu \in V_\alpha(A) \cap M^{\P}$ such that $p \forces \varphi(\dot{v}, \mu, \dot{u})$. Let $\beta = Sup_{\langle \dot{v}, p \rangle \in \bar{x}} \alpha_{\dot{v}, p}$ and set $\rho = (V_\beta(A) \cap M^{\P}) \times \{1_\P\}$. It remains to show that $M[G] \models \forall x \in \dot{w}_G \ \exists y \in \rho_G \ \varphi(x, y, \dot{u}_G)$. Let $\dot{v}_G \in \dot{w}_G$. By Lemma \ref{keylemmarep}, there is some $p \in G$ such that $\langle \dot{v}, p \rangle \in \bar{x}$. So there is some  $\P$-name $\mu \in dom(\rho)$ such that $p \forces \varphi(\dot{v}, \mu, \dot{u})$. Thus, $M[G] \models \varphi(\dot{v}_G, \mu_G, \dot{u}_G)$ and $\mu_G \in \rho_G$.\end{proof}

\begin{theorem}\label{fundamentalthmofforcing}
Let $M$ be a countable transitive model of $\ZFUR$, $\P \in M$ be a forcing poset and $G$ be an $M$-generic fitler over $\P$. Then
\begin{enumerate}
    \item $M[G] \models$ $\ZFUR$.
    \item $M[G] \models$ AC if $M\models$ AC.
    \item $M[G] \models $ Collection if $M\models$ Collection.
    \item $M[G] \models$ Plenitude if $M \models $ Plenitude.
    \item $M[G] \models $ Duplication if $M \models $ Duplication.
    \item $M[G] \models$ Tail if $M \models$ Tail.
    \item $M[G] \models $ DC$_{<Ord}$ if $M \models $ DC$_{<Ord}$.
    \item $M[G] \models$ RP$^-$ if $M \models$ RP$^-$.
    \item $M[G] \models$ RP if $M \models$ RP.
    \item $M[G] \models $ Closure if $M \models $ Closure + AC.
\end{enumerate}
\end{theorem}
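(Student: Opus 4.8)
The plan is to treat the ten items in groups, drawing on the results already in place. Item (1) is immediate: Lemma \ref{forcingpreservesZFCU}(1) gives that $M[G]$ is a countable transitive model of ZU, and Theorem \ref{forcingpreservesreplacement} supplies Replacement, so $M[G] \models \ZFUR$; items (2) and (3) are exactly Lemma \ref{forcingpreservesZFCU}(2),(3).

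For the urelement-combinatorial axioms (4)--(6) the common toolkit is: $Ord \cap M = Ord \cap M[G]$ and $\A \cap M = \A \cap M[G]$ (Lemma \ref{McoversM[G]}(4),(6)), $M \subseteq M[G]$ (Lemma \ref{McoversM[G]}(1)), Separation and Replacement in $M[G]$ from item (1), and above all the covering fact that every set of urelements of $M[G]$ is a subset of one belonging to $M$ (Lemma \ref{McoversM[G]}(8)). For (4), an ordinal of $M[G]$ is an ordinal of $M$, and a realizing set of urelements together with a witnessing bijection, taken from $M$, already lies in $M[G]$. For (5), given a set of urelements $A \in M[G]$, cover it by $A' \in M$, take a duplicate $B \in M$ of $A'$ with bijection $g \in M$; then $g \restriction A$ is, in $M[G]$, a bijection of $A$ onto a duplicate of $A$ disjoint from $A$. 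For (6), cover $A \in M[G]$ by $A' \in M$, let $B \in M$ be a tail of $A'$, and check $B \cup (A' \setminus A)$ is a tail of $A$ in $M[G]$: any set of urelements $C \in M[G]$ disjoint from $A$ splits as $(C \setminus A') \cup (C \cap (A' \setminus A))$; the first piece is disjoint from $A'$, so a cover of it by a set of $M$ injects into $B$ by a witnessing injection from $M$, whose restriction lives in $M[G]$, while the second injects into $A' \setminus A$ by the identity.

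Item (7) I would first reduce to the case $M \models \ZFCUR$, using the observation that the DC$_{<Ord}$-scheme proves AC over $\ZFUR$: to well-order a set $X$, apply the DC$_{\aleph(X)}$-scheme to the class relation extending an injection of an ordinal into $X$ by a new element of $X$ (arbitrary otherwise); since $\aleph(X)$ does not inject into $X$, the resulting sequence must enumerate $X$ before stage $\aleph(X)$. With AC available, Lemma \ref{lemma:DCK->KRealized} and Diagram \ref{ZFCUdiagram} give that $M$ satisfies ``$\A$ is a set'' $\lor$ Plenitude; the former is preserved by Lemma \ref{McoversM[G]}(9), the latter by item (4); and each implies DC$_{<Ord}$ over $\ZFCUR$ (note $M[G]\models$ AC by (2)) via Theorem \ref{easyimplication}(1) respectively Theorem \ref{Plenitude->DCS}. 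Items (8) and (9) I would prove by the ``reflect the forcing relation'' technique: given $M[G] \models \varphi(\vec{\dot v}_G)$, fix $p \in G$ with $p \forces \varphi(\vec{\dot v})$ and, in $M$, use RP$^-$ (resp. RP) to reflect the relevant instances of the forcing recursion for $\varphi$ and its subformulas down to a transitive $t_0 \in M$ that contains $p$, $\P$, the $\dot v_i$ and is closed under choosing witnessing names; then $t_0[G] = \{\dot z_G : \dot z \in t_0 \cap M^\P\}$ is transitive, lies in $M[G]$, contains $\vec{\dot v}_G$, and, by induction on subformulas using the closure of $t_0$ and the forcing theorem, reflects $\varphi$ (for RP) or at least contains $\varphi$-witnesses (for RP$^-$). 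For item (10), given a set $x \in M[G]$ of ordinals realized in $M[G]$, pass to a name $\dot x$; using AC in $M$ and, via Lemma \ref{homogeneitylemma}(3), homogeneity over a set covering $ker(\dot x) \cup ker(\P)$, together with Lemma \ref{McoversM[G]}(8), one collects inside $M$ a single set of urelements whose cardinality dominates $|\sup x|^{M[G]}$, invoking Closure in $M$ to take the supremum; a subset of it of the correct size, obtained by AC in $M[G]$, then realizes $\sup x$ in $M[G]$.

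I expect item (10) — and to a lesser degree (8) and (9) — to be the main obstacle, since there one must control the passage between cardinalities computed in $M$ and in $M[G]$ and collect the relevant sets of urelements (or reflecting objects) uniformly even though $x$ lives only in $M[G]$; the covering Lemma \ref{McoversM[G]}(8) and the homogeneity apparatus of Section \ref{Section2} are precisely what make this work.
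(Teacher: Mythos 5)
Your treatment of (1)--(9) is essentially the paper's own proof: (1)--(3) are Lemma \ref{forcingpreservesZFCU} together with Theorem \ref{forcingpreservesreplacement}; (4)--(6) use the covering fact Lemma \ref{McoversM[G]}(8) exactly as the paper does; (7) runs through AC, Lemma \ref{lemma:DCK->KRealized} and the dichotomy ``$\A$ is a set $\lor$ Plenitude'', preserved by Lemma \ref{McoversM[G]}(9) and item (4), just as in the paper; and (8)--(9) use the same device of reflecting ``$p \forces^* \varphi$'' together with a sufficient finite fragment of $\ZFUR$ to a transitive $m \in M$ and then passing to $m[G]$, which is a transitive set in $M[G]$ via the name $\{\<\doty, 1_\P> : \doty \in m \cap M^\P\}$.

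The one place with a real gap is (10). You propose to ``collect inside $M$ a single set of urelements whose cardinality dominates $|\sup x|^{M[G]}$''; as stated, this would require choosing, for each $\kappa \in x$, a realizing set in $M[G]$ and a cover of it in $M$, and these realizing sets range over what may be a proper class of sets of urelements, so without Collection (which need not hold in $M$ or $M[G]$) there is no way to gather such witnesses into a set --- Closure is precisely a surrogate for this missing instance of Collection, so it has to do the work in $M$ \emph{before} any collecting, not after. The appeal to homogeneity (Lemma \ref{homogeneitylemma}(3)) and to a name $\dot{x}$ plays no role here. The paper's argument for (10) sidesteps the collection problem: let $\lambda$ be the supremum, which we may take to be a limit cardinal; for each single $M$-cardinal $\kappa < \lambda$ there is some realized (in $M[G]$) cardinal $\kappa'$ with $\kappa < \kappa' < \lambda$, realized by some $A \in M[G]$; covering $A$ by $A' \in M$ and using AC in $M$, the $M$-cardinality of $A'$ is at least $\kappa$, so $\kappa$ is realized \emph{in $M$}. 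Applying Closure in $M$ to the set of $M$-cardinals below $\lambda$ realizes $\lambda$ in $M$, and the witnessing bijection of $M$ shows $\lambda$ is realized in $M[G]$. If your phrase ``invoking Closure in $M$ to take the supremum'' was meant in this sense, you must make explicit the intermediate step that every $M$-cardinal below $\sup x$ is realized in $M$ --- that is where Lemma \ref{McoversM[G]}(8) and AC in $M$ are actually used --- and drop the detour through homogeneity.
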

\begin{proof}
(1), (2) and (3) are proved in Lemma \ref{forcingpreservesZFCU} and Theorem \ref{forcingpreservesreplacement}. (4) is clear since if Plenitude holds in $M$, then every ordinal $\alpha$ in $M[G]$ is realized by some set of urelements in $M$. 

(5) Suppose that $M \models $ Duplication. Let $A \subseteq \A$ be in $M[G]$. By Lemma \ref{McoversM[G]} (8), $A \subseteq A'$ for some set $A'$ of urelements in $M$. $A'$ will have a duplicate in $M$, which has a subset that duplicates $A$ in $M[G]$. 

(6) Suppose that $M\models$ Tail. Let $A \subseteq \A$ be in $M[G]$. Fix some $A' \in M$ such that $A \subseteq A'$, and let $B'$ be a tail of $A'$ in $M$. We check that $D = (A'- A) \cup B'$ is a tail of $A$ in $M[G]$. Suppose that $C \in M[G]$ is disjoint from $A$. Fix $C' \in M$ with $C \subseteq C'$. Since $C' - A'$ injects into $B$, it follows that $C' - A$ and hence $C$ injects into $D$. Thus, $D$ is a tail of $A$. 

(7) Suppose that $M \models $ DC$_{<Ord}$. It is a standard result that $\forall \kappa \text{DC}_\kappa$ implies AC, so $M \models$ AC. Then by Lemma \ref{lemma:DCK->KRealized},  $(\A \text{ is a set} \lor \text{Plenitude})$ holds in $M$ and thus holds in $M[G]$ by (4). By (2), $M[G] \models$ AC so we can apply Theorem \ref{maintheorem1} to conclude that $M[G] \models$ DC$_{<Ord}$. 

(8) Suppose that $M \models$ RP$^-$ and $M[G] \models \varphi(\dotx_{1_G}, ..., \dotx_{n_G})$. Fix any $\dotu_G \in M[G]$. Let $p\in G$ be such that $p \forces \varphi(\dotx_1, ..., \dotx_n)$. By RP$^-$ in $M$, there is a transitive set $m$ extending $\{\P, \dotu, \dotx_1, ..., \dotx_n\}$ such that $(p \forces^* \varphi(\dotx_1, ..., \dotx_n))^m$ and $m$ satisfies some finite fragment of $\ZFUR$ that suffices for the construction of $\P$-names inside $m$ and for the forcing theorem to hold for $\varphi$. Then $m[G] \models \varphi(\dotx_{1_G}, ..., \dotx_{n_G})$. $m[G]$ is a set in $M[G]$ because $\dot{m}= \{\<\doty, 1_\P> : \doty \in m \cap M^\P\}$ is a $\P$-name for $m[G]$; $m[G]$ is transitive because $m$ is. Consequently, $M[G] \models \exists t (t \text{ transitive} \land \dotu_G \subseteq t \land \varphi^t(\dotx_{1_G}, ..., \dotx_{n_G}))$. This shows that RP$^-$ holds in $M[G]$.

(9) Suppose that $M \models$RP. Given a formula $\varphi (v_1, ..., v_n)$ and some $\dotu_G \in M[G]$, let $\psi (p, \P, v_1, ..., v_n)$ be the formula asserting that $p \forces^*\varphi(v_1, ..., v_n)$ for $\P$-names $v_1, ..., v_n$. By RP in $M$, there will a transitive set $m$ extending $\{\P, \dotu \}$ that fully reflects $\psi$ and satisfies some finite fragment of $\ZFUR$ sufficient for the construction of $\P$-names and for the forcing theorem to hold for $\varphi$. Then as in the last paragraph, $m[G]$ is a transitive set containing $\dotu_G$ in $M[G]$. If $M[G] \models \varphi (\dotx_{1_G}, ..., \dotx_{n_G})$ for some $\dotx_{1_G}, ..., \dotx_{n_G}$ in $m[G]$, then there will be a $p \in G$ such that $(p \forces^* \varphi(\dotx{_1}, ..., \dotx{_n}))^m$ by reflection, and so $m[G] \models \varphi (\dotx_{1_G}, ..., \dotx_{n_G})$. And if $M[G] \models \varphi (\dotx_{1_G}, ..., \dotx_{n_G})^{m[G]}$, then there is some $p \in G$ such that $(p\forces^* \varphi(\dotx{_1}, ..., \dotx{_n}))^m$, so $p\forces^* \varphi(\dotx{_1}, ..., \dotx{_n}) $ and hence $M[G] \models \varphi (\dotx_{1_G}, ..., \dotx_{n_G})$. This shows that $M[G] \models $ RP.

(10) Suppose that $M \models $ Closure$\land$AC. Let $X \in M[G]$ be a set of realized cardinals whose supermum is some limit cardinal $\lambda$. For every cardinal $\kappa < \lambda$ in $M$, there is a cardinal $\kappa'$ in $M[G]$ such that $\kappa < \kappa' < \lambda$ and $\kappa'$ is realized by some $A \in M[G]$; so in $M$ it follows from AC that any $A'$ that extends $A$ will have size at least $\kappa$. This shows that every $\kappa < \lambda$ in $M$ is realized, so $\lambda$ is realized in $M$ and hence in $M[G]$.
\end{proof}
\noindent  It is not known whether forcing over $\ZFUR$ preserves Closure.

\subsection{Axiom destruction and recovery}\label{section3.5}
We now turn to how forcing may destroy the DC$_\kappa$-scheme. It is known that forcing over ZF does not preserve DC$_\kappa$ for any $\kappa$ (\cite{Monro1983-MONOGE}), so we will focus on whether forcing preserves the DC$_\kappa$-scheme over $\ZFCUR$. A forcing poset $\P$ is \textit{$\kappa$-closed} if in $\P$ every infinite descending chain of length less than $\kappa$ has a lower bound.
\begin{theorem}\label{kclosedforcingpreservedck}
Let $M$ be a countable transitive model of $\ZFCUR$ + DC$_\kappa$-scheme, $\P \in M$ be such that $(\P \text{ is } \kappa^+\text{-closed})^M$ and $G$ be an $M$-generic filter over $\P$. Then $M[G] \models$ $\ZFCUR$ + DC$_\kappa$-scheme.
\end{theorem}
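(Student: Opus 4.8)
By Theorem \ref{fundamentalthmofforcing}(1)--(2) we already have $M[G] \models \ZFCUR$, since $M \models \ZFUR + $AC; so it remains only to verify the DC$_\kappa$-scheme in $M[G]$, and by Lemma \ref{McoversM[G]}(4) every ordinal below $\kappa$ in $M[G]$ already lies in $M$, so ``$\alpha<\kappa$'' is absolute. The plan is to establish, working in $M$, the following forcing-side form of the scheme: for any formula $\varphi$, any $\dotu \in M^\P$, and any $p\in\P$,
\[ p \forces \forall x \exists y\, \varphi(x,y,\dotu) \ \Longrightarrow\ p \forces \exists f\,\big(f \text{ is a function on } \check\kappa \ \land\ \forall \alpha<\check\kappa\ \varphi(f\restriction\alpha, f(\alpha), \dotu)\big). \]
Granting this, if $M[G] \models \forall x\exists y\,\varphi(x,y,\dotu_G)$ we choose $p\in G$ forcing $\forall x\exists y\,\varphi(x,y,\dotu)$, deduce that $p$ forces the right-hand side, and conclude $M[G] \models \exists f(\dots)$, i.e.\ the required instance of the DC$_\kappa$-scheme.

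To prove the implication, fix $p$ as on the left. Since a statement forced by every member of a set dense below $p$ is forced by $p$, it is enough to show that below an arbitrary $q\leq p$ there is $q'\leq q$ forcing the right-hand side. Fix such a $q$; then $q \forces \forall x\exists y\,\varphi(x,y,\dotu)$, and we now argue entirely inside $M$, where $r\forces\sigma$ is a definable relation by the forcing theorem. Write $\mathrm{seq}(\langle\dot y_\beta:\beta<\alpha\rangle)$ for the canonical $\P$-name, a definable function of the sequence $\langle\dot y_\beta:\beta<\alpha\rangle$ of $\P$-names, whose interpretation in any generic extension is $\langle(\dot y_\beta)_{\mathrm{ext}}:\beta<\alpha\rangle$. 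Call a sequence $\langle(p_\beta,\dot y_\beta):\beta<\alpha\rangle$ with $\alpha<\kappa$ a \emph{good run} if $\langle p_\beta:\beta<\alpha\rangle$ is descending below $q$ and $p_\gamma \forces \varphi(\mathrm{seq}(\langle\dot y_\beta:\beta<\gamma\rangle),\dot y_\gamma,\dotu)$ for every $\gamma<\alpha$. Let $\psi(x,y)$, with parameters $\P,q,\dotu,\kappa$, assert: either $x$ is not a good run and $y=\emptyset$; or $x$ is a good run of some length $\alpha<\kappa$ and $y=\langle q',\dotz\rangle$, where $q'\in\P$ is below $q$ and below every condition occurring in $x$, and $q'\forces\varphi(\mathrm{seq}(\langle\dot y_\beta:\beta<\alpha\rangle),\dotz,\dotu)$. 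Then $M\models\forall x\exists y\,\psi(x,y)$: the non-good case is trivial, and if $x$ is a good run of length $\alpha$ then $\langle p_\beta:\beta<\alpha\rangle$ is a descending chain of length $\alpha<\kappa<\kappa^+$, so by $\kappa^+$-closedness it has a lower bound $p^*\leq q$; since $q\forces\forall x\exists y\,\varphi$ we get $p^*\forces\exists y\,\varphi(\mathrm{seq}(\langle\dot y_\beta:\beta<\alpha\rangle),y,\dotu)$, and unravelling the forcing relation for $\exists$ produces $q'\leq p^*$ and a $\P$-name $\dotz$ with $q'\forces\varphi(\mathrm{seq}(\langle\dot y_\beta:\beta<\alpha\rangle),\dotz,\dotu)$.

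Now apply the DC$_\kappa$-scheme of $M$ to $\psi$, obtaining $f$ on $\kappa$ with $\psi(f\restriction\alpha, f(\alpha))$ for all $\alpha<\kappa$. An induction on $\alpha<\kappa$ — the limit stages handled by the fact that the union of an increasing chain of good runs is again a good run — shows that each $f\restriction\alpha$ is a good run; writing $f(\alpha)=\langle p_\alpha,\dot y_\alpha\rangle$, this means $\langle p_\alpha:\alpha<\kappa\rangle$ is descending below $q$ and $p_\alpha\forces\varphi(\mathrm{seq}(\langle\dot y_\beta:\beta<\alpha\rangle),\dot y_\alpha,\dotu)$ for every $\alpha<\kappa$. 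Use $\kappa^+$-closedness once more: this chain has length $\kappa<\kappa^+$, so it has a lower bound $p^\dagger$, and $p^\dagger\leq q$. Put $\dot f=\mathrm{seq}(\langle\dot y_\alpha:\alpha<\kappa\rangle)\in M^\P$; then $1_\P$ forces that $\dot f$ is a function on $\check\kappa$ and, for each $\alpha<\kappa$, that $\dot f\restriction\check\alpha=\mathrm{seq}(\langle\dot y_\beta:\beta<\alpha\rangle)$ and $\dot f(\check\alpha)=\dot y_\alpha$. Since $p^\dagger\leq p_\alpha$, monotonicity of $\forces$ together with these equalities gives $p^\dagger\forces\varphi(\dot f\restriction\check\alpha,\dot f(\check\alpha),\dotu)$ for all $\alpha<\kappa$; as all ordinals below $\kappa$ lie in $M$, $p^\dagger$ forces the right-hand side. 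Thus $q':=p^\dagger$ works, proving the implication and hence the theorem.

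I expect the main obstacle to be the bookkeeping of the two middle paragraphs: isolating the auxiliary class relation $\psi$ so that a \emph{single} application of the DC$_\kappa$-scheme in $M$ yields the entire length-$\kappa$ run of (condition, name) pairs, and checking that the scheme's output really is a good run at every stage, including limits. The two appeals to $\kappa^+$-closedness — at the limit stages of the run, and, decisively, at the end to replace the whole descending $\kappa$-chain by one condition $p^\dagger$ forcing all instances $\varphi(\dot f\restriction\alpha,\dot f(\alpha),\dotu)$ simultaneously — are precisely where the closure hypothesis is essential; mere $\kappa$-closedness would not deliver $p^\dagger$, and no single condition would then force the conclusion.
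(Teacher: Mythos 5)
Your proposal is correct and follows essentially the same route as the paper: the same auxiliary class relation (your $\psi$, the paper's $\psi$ for building ``$\varphi$-chains''), a single application of the DC$_\kappa$-scheme in $M$ to produce a length-$\kappa$ descending run of (condition, name) pairs, $\kappa^+$-closedness to bound the whole run, and the canonical sequence-name to read off the threading function. The only difference is packaging: the paper shows the set of lower bounds of $\varphi$-chains is dense below $p$ and picks one in $G$ by genericity, whereas you show densely many conditions force the existential statement itself — the same argument in syntactic rather than semantic dress.
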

\begin{proof}
 For every $\alpha$-sequence $s$ of $\P$-names, let $\dot{s}^{(\alpha)}$ denote the canonical $\P$-name such that $\dot{s}^{(\alpha)}_G$ is an $\alpha$-sequence in $M[G]$ with $\dot{s}^{(\alpha)}_G(\eta) = s(\eta)_G$ for all $\eta < \alpha$. Given a $p \in \P$ and a suitable formula $\varphi$, a $\kappa$-sequence of the form $\<\<p_\alpha, \dot{x}_\alpha> : \alpha < \kappa>$, where $\<p_\alpha, \dot{x}_\alpha> \in \P \times M^\P$, is said to be a \textit{ $\varphi$-chain below} $p$ if $\<p_\alpha : \alpha < \kappa>$ is a descending chain below $p$ and for every $\alpha < \kappa$, $p_\alpha \forces \varphi(\dot{s}^{(\alpha)}, \dot{x}_{\alpha+1})$ where $s = \<\dot{x}_\eta : \eta < \alpha>$.

Suppose that $M[G] \models \forall x \exists y \varphi(x, y, u)$. There is some $p \in G$ such that $p \forces \forall x \exists y \varphi (x, y, \dot{u})$. Let $D$ be the set of forcing conditions that are a lower bound of some $\varphi$-chain below $p$. 
\begin{claim}
$D$ is dense below $p$.
\end{claim}
\begin{claimproof}
Fix some $r \leq p$. Let $\psi(x, y)$ be the following formula (with parameters $r, \P, \kappa$ and $\dot{u}$).
\begin{itemize}
    \item [] $\psi(x, y) =_{df}$ if $x$ is some $\<\<p_\eta, \dot{x}_\eta> : \eta < \alpha> \in (\P \times M^\P)^\alpha$, where  $\<p_\eta : \eta < \alpha>$ is a descending chain and $\alpha < \kappa$, then $y$ is some $\<q, \dot{x}> \in \P \times M^\P$ such that $q$ bounds $\<p_\eta : \eta < \alpha>$ and $q \forces \varphi(\dot{s}^{(\alpha)}, \dot{x}, \dot{u})$, where  $s = \<\dot{x}_\eta : \eta < \alpha>$.
\end{itemize}
Let $\P\downarrow r$ denote the set of conditions in $\P$ below $r$. In $M$, for every $x \in (\P \downarrow r \times M^\P)^{<\kappa}$, there is some $y \in \P \downarrow r \times M^\P$ such that $\psi(x, y)$ since $\P$ is $\kappa$-closed. By the DC$_\kappa$-scheme in $M$, there exists a $\varphi$-chain $\<\<p_\alpha, \dot{x}_\alpha> : \alpha < \kappa>$, where $\<p_\alpha : \alpha < \kappa>$ is below $r$ and hence below $p$. $\P$ is $\kappa^+$-closed, so there is some $q$ that bounds this $\varphi$-chain below $p$. Thus, $D$ is dense below $p$.\end{claimproof}

\noindent So there is some $q \in G$ that bounds a $\varphi$-chain, $\<\<p_\alpha, \dot{x}_\alpha> : \alpha < \kappa>$, below $p$. Let $s = \<\dot{x}_\alpha : \alpha < \kappa>$ and $f = \dot{s}^{(\kappa)}_G$. $f$ is then a $\kappa$-sequence in $M[G]$ and $\kappa$ is not collapsed in $M[G]$ as $\P$ is $\kappa$-closed. Moreover, $M[G] \models \varphi (f\restriction \alpha, f(\alpha), u)$ for all $\alpha < \kappa$ because $q \forces \varphi(\dot{s}^{(\alpha)}, \dot{x}_\alpha, \dot{u})$. 
\end{proof}
For any infinite cardinals $\kappa$ and $\lambda$ with $\kappa < \lambda$, $\textup{Col}(\kappa, \lambda)$ is the forcing poset consisting of all partial functions from $\kappa$ to $\lambda$ whose domain has size less than $\kappa$ (ordered by reverse inclusion).
\begin{theorem}\label{thm:ForcingDesotryDCK}
Forcing over countable transitive models of $\ZFCUR$ + Collection does not preserve the DC$_{\omega{_1}}$-scheme.
\end{theorem}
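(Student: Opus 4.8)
The plan is to exhibit a single witnessing instance: a countable transitive model $M \models \ZFCUR + \text{Collection} + \text{DC}_{\omega_1}\text{-scheme}$, a forcing poset $\P \in M$, and an $M$-generic $G$ such that the $\text{DC}_{\omega_1}$-scheme fails in $M[G]$. The failure will be extracted from Lemma \ref{lemma:DCK->KRealized}: I will arrange that $\A$ is a proper class in $M[G]$ while \emph{every} set of urelements of $M[G]$ is countable there, so that $\omega_1^{M[G]}$ is not realized in $M[G]$ and hence the $\text{DC}_{\omega_1}$-scheme cannot hold.

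For the ground model I would take the model witnessing Theorem \ref{zfcurindependece}(6) with $\kappa = \omega_1$. That is, fix $U \models \ZFCUR$ with $\A \sim \omega_2$, let $\I$ be the $\A$-ideal of all sets of urelements of size $< \omega_2$, and set $M = U^\I$. By Theorem \ref{smallkernelmodel}, $M \models \ZFCUR + {}$``$\A$ is a proper class''. Since $|\A|^U = \omega_2$, every set of urelements of $M$ has tail cardinal exactly $\omega_1$; hence $M \models \text{Collection}$ by Theorem \ref{tail->collection} and $M \models \text{DC}_{\omega_1}\text{-scheme}$ by Lemma \ref{Tailkappa->DCkappa}. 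I would also record that $M$ and $U$ compute the same cardinalities on pure sets and on sets of urelements, so $\omega_1^M = \omega_1^U$ and the cardinals realized in $M$ are precisely those $\le \omega_1^M$.

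Next, let $\P = \textup{Col}(\omega, \omega_1^M)^M$ and let $G$ be $M$-generic over $\P$. A routine density argument shows that $\bigcup G$ is a surjection of $\omega$ onto $\omega_1^M$, so $\omega_1^M$ is countable in $M[G]$; by Theorem \ref{fundamentalthmofforcing}(1)--(3), $M[G] \models \ZFCUR + \text{Collection}$, and by Lemma \ref{McoversM[G]}(9) $\A$ remains a proper class in $M[G]$. The key step is then that every set of urelements $A$ of $M[G]$ is countable in $M[G]$: by Lemma \ref{McoversM[G]}(8) there is a set of urelements $A' \in M$ with $A \subseteq A'$, and $|A'|^M \le \omega_1^M$ by the choice of $\I$; since there is an injection $A' \hookrightarrow \omega_1^M$ in $M$ and $\omega_1^M$ is countable in $M[G]$, the set $A$ is countable in $M[G]$. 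Consequently $\omega_1^{M[G]}$, being uncountable, is not realized in $M[G]$, and since $\A$ is a proper class there, Lemma \ref{lemma:DCK->KRealized} yields $M[G] \not\models \text{DC}_{\omega_1}\text{-scheme}$. Since $M \models \ZFCUR + \text{Collection} + \text{DC}_{\omega_1}\text{-scheme}$, this completes the construction.

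I expect no genuine obstacle; the only point requiring a little attention is the cardinal-arithmetic bookkeeping in the previous paragraph — combining Lemma \ref{McoversM[G]}(8) with the collapse of $\omega_1^M$ — to conclude that all sets of urelements of $M[G]$ are countable, and the accompanying density facts about $\textup{Col}(\omega, \omega_1^M)$, both standard. The conceptual heart of the argument is simply that Collection survives forcing (Theorem \ref{fundamentalthmofforcing}(3)) whereas the $\text{DC}_{\omega_1}$-scheme forces $\omega_1$ to be realized by urelements (Lemma \ref{lemma:DCK->KRealized}), and collapsing $\omega_1^M$ while all the urelements stay in $M$ drains every realized cardinal below the new $\omega_1$.
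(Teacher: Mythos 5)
Your proposal is correct and follows essentially the same route as the paper: the same ground model $U^{\I}$ built from a model with $\A \sim \omega_2$ and the ideal of sets of urelements of size at most $\omega_1$ (giving Collection via Theorem \ref{tail->collection} and the DC$_{\omega_1}$-scheme via Lemma \ref{Tailkappa->DCkappa}), the same collapse $\textup{Col}(\omega,\omega_1^M)$, and the same conclusion via Lemma \ref{McoversM[G]}(8) and Lemma \ref{lemma:DCK->KRealized} that in $M[G]$ all sets of urelements are countable while $\A$ remains a proper class, so the DC$_{\omega_1}$-scheme fails. The extra bookkeeping you flag (that $\omega_1^{M[G]}$ is not realized, and preservation of $\ZFCUR$ + Collection in the extension) matches or harmlessly supplements the paper's argument.
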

\begin{proof}
Let $M$ be a countable transitive model that satisfies $\ZFCUR$ + ``Every $A \subseteq \A$ has a tail of size $\omega_1$''. To have a model of this sort, we can start with a countable transitive model $N$ of $\ZFCUR$ in which $\A \sim \omega_2$. In $N$, let $\I$ be the $\A$-ideal of all sets of urelements of size $\omega_1$, and let $M$ be $N^\I$ as in Definition \ref{normalideal}.

By Theorem \ref{tail->collection} and Lemma \ref{Tailkappa->DCkappa}, both Collection and the DC$_{\omega_1}$-scheme hold in $M$.  Let $\P = \textup{Col}(\omega, \omega_1^M)$ and $G$ be $M$-generic over $\P$. In $M[G]$, every set of urelements is countable, because by Lemma \ref{McoversM[G]} (8) every $A \in M[G]$ is a subset of some $A' \in M$ such that $|A'| \leq \omega_1{^M}$ but $\omega_1{^M}$ is collapsed to $\omega$ in $M[G]$. $M[G]$ still has a proper class of urelements, so the DC$_{\omega_1}$-scheme fails in $M[G]$ by Lemma \ref{lemma:DCK->KRealized}.
\end{proof}
\noindent Since $\ZFCUR$ + Collection proves the DC$_\omega$-scheme, forcing over $\ZFCUR$ + Collection preserves DC$_\omega$-scheme as it preserves Collection.
\begin{question}
Does forcing over $\ZFCUR$ preserve the DC$_\omega$-scheme?
\end{question}

Forcing can also recover axioms. Next, we show that Collection and RP are \textit{necessarily forceble} when the ground model satisfies $\ZFCUR$ + DC$_\omega$-scheme (note that the DC$_\omega$-scheme does not imply Collection or RP by Theorem \ref{maintheorem1}).
\begin{theorem}\label{recovercollection}
Every forcing extension of a countable transitive model $M$ of $\ZFCUR$ + DC$_\omega$-scheme has a forcing extension that satisfies the Collection and Reflection Principle.
\end{theorem}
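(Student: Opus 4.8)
The plan is to reduce, via Corollary \ref{urcharacterization}, to forcing the Tail axiom over $M[G]$, and to achieve this with a single L\'evy collapse, using the DC$_\omega$-scheme in $M$ only to control how the urelements of $M$ are arranged. First dispose of the trivial case: if $M\models$ Collection, then $M[G]\models$ Collection by Theorem \ref{fundamentalthmofforcing}(3), and since $M[G]\models\ZFCUR$ (AC being preserved), Corollary \ref{urcharacterization} gives $M[G]\models$ RP, so take the trivial forcing for $H$. So assume $M\models\neg$Collection. By Corollary \ref{urcharacterization} this means $M\models\neg$Plenitude, and by Proposition \ref{weakcollection} $\A$ is a proper class in $M$. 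I would then isolate two facts about $M$. \emph{(i) A cardinal bound.} Since $M\models$ AC, if an ordinal $\gamma$ is not realized then no set of urelements can have size $\ge|\gamma|$ (a subset of size $|\gamma|$ would realize $\gamma$), so there is a cardinal $\delta\in M$ with $|A|<\delta$ for every set of urelements $A$ of $M$. \emph{(ii) A dichotomy from the DC$_\omega$-scheme:} for every set of urelements $A$ of $M$, either there is an infinite set of urelements disjoint from $A$, or the sizes of the sets of urelements disjoint from $A$ are bounded by a natural number, in which case $A$ has a tail. For (ii), if every set of urelements disjoint from $A$ is finite but of unbounded size, apply the DC$_\omega$-scheme to the (total) relation ``$\varphi(s,y)$: if $s$ is a finite sequence of finite sets of urelements disjoint from $A$, strictly increasing in cardinality, then $y$ is a finite set of urelements disjoint from $A$ with $|y|$ larger than the cardinality of the union of the members of $s$, and otherwise $y=\emptyset$''; the resulting $\omega$-sequence $f$ has $\bigcup_{n<\omega}f(n)$ an infinite set of urelements disjoint from $A$, a contradiction.

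Next the forcing. Using (i) and Lemma \ref{McoversM[G]}(8), the sizes computed in $M[G]$ of the sets of urelements of $M[G]$ are bounded by some cardinal $\delta^{*}$ of $M[G]$. Let $\mathbb{Q}=\textup{Col}(\omega,\delta^{*})^{M[G]}$ and let $H$ be $M[G]$-generic over $\mathbb{Q}$. Then $M[G][H]\models\ZFCUR$ (Theorem \ref{fundamentalthmofforcing} applied with $M[G]$ as ground model), $\A$ is still a proper class there (Lemma \ref{McoversM[G]}(9)), and every set of urelements of $M[G][H]$ lies inside one of $M[G]$ (Lemma \ref{McoversM[G]}(8)), hence has size $\le\delta^{*}$, hence is countable in $M[G][H]$. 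To verify Tail in $M[G][H]$: given a set of urelements $A\in M[G][H]$, two uses of Lemma \ref{McoversM[G]}(8) give a set of urelements $A''\in M$ with $A\subseteq A''$, and I apply the dichotomy (ii) to $A''$. If some infinite $B\in M$ is disjoint from $A''$, then $B$ is a countably infinite set of urelements of $M[G][H]$ disjoint from $A$, and since every set of urelements is countable there, $B$ is a tail of $A$. Otherwise the sets of urelements disjoint from $A''$ have size $\le n$ in $M$, and hence also in $M[G][H]$ (any such $C$ lies inside some $C'\in M$, hence inside $C'\setminus A''\in M$, which is disjoint from $A''$ and of size $\le n$ already in $M$); then $A''\setminus A$ is a set of urelements of $M[G][H]$ disjoint from $A$, and if it is infinite it is a tail of $A$ as above, while if it has size $m$ then every set of urelements $C$ disjoint from $A$ satisfies $|C|\le|C\cap(A''\setminus A)|+|C\setminus A''|\le m+n$, so a largest such $C$ is a tail of $A$. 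Thus $M[G][H]\models$ Tail, and Corollary \ref{urcharacterization} yields $M[G][H]\models$ Collection $+$ RP.

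I expect the genuinely new ingredient to be the dichotomy (ii); everything else is standard forcing bookkeeping, and the only delicate point there is the repeated reduction of sets of urelements of the double extension $M[G][H]$ back to sets of urelements of $M$ via Lemma \ref{McoversM[G]}(8), together with the routine absoluteness of infiniteness for sets of urelements. If one prefers, (i) can be bypassed by taking $\mathbb{Q}$ to be the full L\'evy collapse $\textup{Col}(\omega,{<}\textup{Ord})$ as a pretame class forcing, at the cost of invoking class-forcing versions of the results of Section \ref{section3}.
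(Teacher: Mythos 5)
Your proof is correct and follows essentially the same route as the paper: reduce to Tail via Corollary \ref{urcharacterization}, collapse a cardinal bounding the sizes of the sets of urelements so that they all become countable in the final extension, and use the DC$_\omega$-scheme in $M$ together with Lemma \ref{McoversM[G]}(8) to supply an infinite set of urelements disjoint from any given one. The only cosmetic difference is that in your non-trivial case $\A$ is a proper class in $M$, so the second branch of your dichotomy (ii) can never occur (finite sets of urelements disjoint from a given $A$ then have unbounded size), making that part of the verification redundant though harmless.
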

\begin{proof}
Let $M[G]$ be an arbitrary forcing extension of $M$. We may assume that $M[G] \models$ ``$\A$ is a proper class'' + $\neg$ Plenitude, since otherwise Collection holds in every forcing extension of $M[G]$ by Diagram \ref{ZFCUdiagram} and Lemma \ref{forcingpreservesZFCU}. So $M \models$ ``$\A$ is a proper class'' by Lemma \ref{McoversM[G]} (8) and in $M[G]$ there is a least infinite cardinal $\kappa$ that is not realized. Let $H$ be an $M[G]$-generic filter over $\textup{Col}(\omega, \kappa)$. As $\kappa$ is collapsed to $\omega$ in $M[G][H]$, every set of urelements in $M[G][H]$ is countable. In $M$, by the DC$_\omega$-scheme, for every $A \subseteq \A$ there is an infinite $B \subseteq \A$ that is disjoint from $A$; by Lemma \ref{McoversM[G]} (8), this fact is preserved by forcing so it holds in $M[G][H]$. This shows that $M[G][H] \models $ Tail. Therefore, $M[G][H] \models $ Collection $\land$ RP by Diagram \ref{ZFCUdiagram}.
\end{proof}
\noindent The assumption that $M\models$ DC$_\omega$-scheme in Theorem \ref{recovercollection} cannot be dropped: if $M$ has a proper class of urelements but every set of them is finite, by Lemma \ref{McoversM[G]} (8) and (9) this will remain the case in every forcing extension of $M$.

\subsection{Ground model definability}\label{section3.6}

Laver \cite{Laver2007-LAVCVL} and Woodin \cite{woodin2011continuum} proved independently the ground model definability for ZFC: every transitive model of ZFC is definable with parameters in all of its generic extensions. Laver's argument (which is also attributed to Hamkins \cite{Hamkins2003ExtensionsWT}) can be easily modified to show that every transitive model of $\ZFCUR$ with only a set of urelements is definable in all of its generic extensions with parameters (\cite[Theorem 85]{yao2023set}). And as a corollary, if $M$ is a transitive model of $\ZFCUR$ in which some cardinal $\kappa$ is not realized, then $M$ is definable in all of its generic extensions produced by $\kappa$-closed forcing notions (\cite[Corollary 85.1]{yao2023set}). Here we show how the ground model definability may \textit{fail} if the ground model has a proper class of urelements.

For any infinite set of $x$, $\textup{Fn}(x, 2)$ is the forcing poset consisting of all finite partial functions from $x$ to $2$ ordered by reversed inclusion, which adds a new subset to every set that is equinumerous with $x$.
\begin{theorem}
Let $M$ be a countable transitive model of $\ZFCUR$.
\begin{enumerate}
    \item If $M \models$ DC$_\omega$-scheme + ``$\A$ is a proper class'', then $M$ has a forcing extension in which $M$ is not definable with parameters.
    \item If $M \models $ Plentitude, then $M$ is not definable in any of its non-trivial forcing extensions. 
\end{enumerate}
\end{theorem}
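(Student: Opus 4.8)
The plan is to prove both parts by one and the same symmetry argument. Suppose toward a contradiction that $M$ is definable in a forcing extension $M[G]$, say $M=\{v\in M[G]:M[G]\models\varphi(v,a)\}$ for a formula $\varphi$ and a parameter $a\in M[G]$. If $\sigma$ is any $\in$-automorphism of $M[G]$ with $\sigma(a)=a$, then $\sigma[M]=M$, since $v\in M\iff M[G]\models\varphi(v,a)\iff M[G]\models\varphi(\sigma v,a)\iff \sigma v\in M$. Now recall that $\sigma(a)=a$ holds as soon as $\sigma$ pointwise fixes $ker(a)$, that $ker(a)$ is a \emph{set} of urelements, and (by Lemma \ref{McoversM[G]}(8)) that there is a set of urelements $A'\in M$ with $ker(a)\subseteq A'$. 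Since in both parts $\A$ is a proper class — in part (2) this follows from Plenitude — there is ample room outside $A'$. The whole game is therefore to manufacture, from a set newly added by the forcing, a permutation $\sigma$ of $\A$ lying in (i.e.\ definable over) $M[G]$ which pointwise fixes $A'$ but does not fix $M$ setwise.

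For part (1) I would fix a countably infinite set of urelements $x\in M$, available by Lemma \ref{lemma:DCK->KRealized} from the DC$_\omega$-scheme together with ``$\A$ is a proper class'', and force with $\P=\textup{Fn}(x,2)$; via a bijection $x\to\omega$ in $M$ the generic object is a set $X'\subseteq\omega$ with $X'\in M[G]\setminus M$. Given a putative definition with parameter $a$, pick $A'\in M$ with $ker(a)\cup x\subseteq A'$. In $M$ the class relation ``$ker(p)\subsetneq ker(q)$'' has no terminal nodes because $\A$ is a proper class, so the DC$_\omega$-scheme started at $p=A'$ yields a sequence whose kernels strictly increase; their union is an infinite set of urelements containing $A'$, and subtracting $A'$ and passing to a countable subset gives an injective list $\langle b_i:i<\omega\rangle$ of urelements disjoint from $A'$. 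Now in $M[G]$ let $\sigma$ be the permutation of $\A$ that transposes $b_{2i}$ with $b_{2i+1}$ exactly when $i\in X'$ and is the identity elsewhere. Then $\sigma$ pointwise fixes $A'\supseteq ker(a)$, so $\sigma(a)=a$, and $\sigma$ extends (by the recursion recalled in the introduction, carried out inside $M[G]$) to an $\in$-automorphism of $M[G]$; hence $\sigma[M]=M$. But $y=\{b_{2i}:i<\omega\}$ lies in $M$ while $\sigma(y)\notin M$, since from $\sigma(y)$ and $\langle b_i:i<\omega\rangle\in M$ one would recover $X'=\{i<\omega:b_{2i+1}\in\sigma(y)\}$ inside $M$. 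This contradiction proves (1).

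For part (2), let $M[G]\neq M$ be any non-trivial extension by $\P\in M$. First I would reduce to a new subset of an ordinal: from any $z\in M[G]\setminus M$, with $ker(z)\subseteq A_0\in M$, a bijection in $M$ from $A_0$ onto an ordinal turns $z$ into a \emph{pure} set in $M[G]\setminus M$ by recursively replacing urelements with tagged ordinals, and then the usual ZFC argument inside the pure parts $V^M\subseteq V^{M[G]}$ produces an ordinal $\gamma$ and a set $X'\subseteq\gamma$ with $X'\in M[G]\setminus M$. Given a putative definition with parameter $a$, pick $A'\in M$ with $ker(a)\subseteq A'$. Now I use Plenitude in $M$: taking a set of urelements of some cardinality $\mu>|A'|+|\gamma|$ and deleting $A'$ from it yields a set $B\in M$ of urelements disjoint from $A'$ with $|B|\geq\gamma$, which by AC in $M$ splits as a sequence of disjoint pairs $\langle\{c_\eta,d_\eta\}:\eta<\gamma\rangle$. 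In $M[G]$ let $\sigma$ transpose $c_\eta$ with $d_\eta$ exactly when $\eta\in X'$ and fix everything else; as before $\sigma(a)=a$, $\sigma$ extends to an $\in$-automorphism of $M[G]$, and so $\sigma[M]=M$, yet $\sigma(\{c_\eta:\eta<\gamma\})\notin M$ because $X'$ is recoverable from it and the partition inside $M$. This contradiction proves (2).

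The two places I expect to need care are: (a) checking that a permutation of $\A$ merely definable over $M[G]$ genuinely extends, inside $M[G]$, to an $\in$-automorphism that preserves satisfaction of $\varphi$ — this uses that $M[G]$ is a transitive model of $\ZFUR$ (Theorem \ref{fundamentalthmofforcing}(1)) and so admits the relevant well-founded recursion and the formula induction — and (b) in part (2), the reduction of an arbitrary newly added set to a new subset of an ordinal in the presence of urelements. Everything else is routine bookkeeping once the symmetry is in place. It is worth noting the division of labor among the hypotheses: the DC$_\omega$-scheme only guarantees \emph{countably} many urelements disjoint from $ker(a)$, which is exactly enough to diagonalize against the countable amount of new information that $\textup{Fn}(x,2)$ adds; Plenitude instead supplies urelements of \emph{every} cardinality, which is precisely what lets the same argument absorb an arbitrary non-trivial forcing.
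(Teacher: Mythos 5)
Your proof is correct and uses essentially the same symmetry argument as the paper: an automorphism of $M[G]$ that pointwise fixes the kernel of the parameter fixes the parameter, hence fixes the putatively definable class $M$ setwise, yet can be arranged to move a set belonging to $M$ outside of $M$. The only real difference is in the bookkeeping: you code the newly added subset of $\omega$ (respectively, after your purification step in (2), of an ordinal) into a permutation of pairs of fresh urelements and move an old set of urelements to a new one, whereas the paper transports the new object into a new \emph{set of urelements} and swaps it with an old duplicate disjoint from $ker(\dot{u})$; both versions rest on exactly the same homogeneity of the urelements over the parameter's kernel.
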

\begin{proof}
(1) Suppose that $M \models$ DC$_\omega$-scheme + ``$\A$ is a proper class''. Let $\P \in M$ be $\textup{Fn}(\omega, 2)$ and $G$ be an $M$-generic filter over $\P$. Suppose \textit{for reductio} that $M$ is definable in $M[G]$ with a parameter $\dot{u}_G \in M[G]$ such that $M = \{ x \in M[G] : M[G] \models \varphi (x,\dot{u}_G)\}.$ Let $B' \in M$ be an infinite set of urelements disjoint from $ker(\dot{u})$, which exists by the DC$_\omega$-scheme. Then $M[G]$ contains a new countable subset $B$ of $B'$ which is not in $M$. Fix another countable set of urelements $C \in M$ that is disjoint from $ker(\dot{u}) \cup B'$. In $M[G]$, there will be an automorphism that swaps $C$ and $B$ while point-wise fixing $ker(\dot{u})$. Since $M[G] \models \neg \varphi (B,\dot{u}_G)$ and $ker(\dot{u}_G) \subseteq ker(\dot{u})$, it follows that $M[G] \models \neg \varphi (C,\dot{u}_G)$ and hence $C \notin M$, which is a contradiction.

(2) Suppose that $M \models$ Plentitude and consider any $M[G]$ such that $M \subsetneq M[G]$. First observe that there must be some set of urelements $B$ such that $B \in M[G] - M$. Fix some $\dot{x}_G \in M[G] - M$ of the least rank so that $\dot{x}_G \subseteq M$. Let $A = ker(\dot{x})$. It follows that $\dot{x}_G \subseteq V_\alpha (A)^M$ for some $\alpha$. By Plenitude and AC in $M$, there is a bijection $f$ from $V_\alpha (A)^M$ to a set of urelements,  so $f\restriction\dotx_G$ will produce a new set of urelements in $M[G]$.

For \textit{reductio}, suppose that $M = \{ x \in M[G] : M[G] \models \varphi(x,  \dot{u}_G)\}$ for some formula $\varphi$ with parameter $ \dot{u}_G$. Fix some set of urelements $B \in M[G] - M$ and $B' \in M$ such that $B \subseteq B'$. In $M$, by Plenitude $B'$ has a duplicate $E$ that is disjoint from $ker(\dot{u})$. Then $E$ has a new subset $D$ in $M[G]$ that is disjoint from $ker(\dot{u})$. By AC and Plenitude in $M$, we can again find a duplicate $C \in M$ of $D$ that is disjoint from $ker(\dot{u})$. So there will be an automorphism in $M[G]$ that swaps $C$ and $D$ while point-wise fixing $ker(\dot{u})$. As $M[G] \models \neg \varphi (D, \dot{u}_G)$, it follows that $M[G] \models \neg \varphi(C, \dot{u}_G)$ and hence $C \notin M$, which is a contradiction. %Can we just use Plenitude+ in the ground model?
\end{proof}

\appendix
\setcounter{secnumdepth}{0}
\section{Appendix}
In this appendix, we prove that the two forcing methods produce the same generic extensions. One can prove directly that $M[G]_{\scaleto{\#}{4pt}}$ satisfies $\ZFUR$ and then use the minimality of $M[G]$ and $M[G]_{\scaleto{\#}{4pt}}$ to conclude that they are the same model. Here, we take a different approach by analyzing the relationship between the names in $M^\P_{\scaleto{\#}{4pt}}$ and $M^\P$. In the following, $M$ is a countable transitive model of $\ZFUR$, $\P \in M$ a forcing poset, and $G$ an $M$-generic filter over $\P$.

To start with, there is a natural embedding from $M^\P_{\scaleto{\#}{4pt}}$ to $M^\P$ defined as follows.
\begin{definition}
Define $j : M^\P_{\scaleto{\#}{4pt}} \rightarrow M^\P$ by recursion as follows. For every $\tau \in M^\P_{\scaleto{\#}{4pt}}$,
\begin{equation*}
   j(\tau) =
    \begin{cases*}
     \{\<\tau, 1_\P>\} & if $\A(\tau)$  \\
     \{\<j(\sigma), p> : \<\sigma, p> \in \tau\}        & otherwise 
    \end{cases*}
  \end{equation*}
\end{definition}
\noindent We shall use Greek letters $\tau, \sigma, ...$ to denote the names in $M^\P_{\scaleto{\#}{4pt}}$ (Definition \ref{oldpnames}). A straightforward induction will show that $j$ is a 1-1 function from $M^\P_{\scaleto{\#}{4pt}}$ to  $M^\P$; in particular, the incompatibility condition in Definition \ref{newpnames} is trivially satisfied because each $j(\sigma)$ is in $M^\P$.

\begin{lemma}\label{tilde1-1}
For every $\sigma, \tau \in M^\P_{\scaleto{\#}{4pt}} $, $\sigma_G = \tau_G$ if and only if $j(\sigma)_{G} = j(\tau)_{G}$.
\end{lemma}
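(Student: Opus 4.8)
The plan is to prove both directions of the biconditional by simultaneous induction on the ranks of $\sigma$ and $\tau$ in $M^\P_{\scaleto{\#}{4pt}}$, exploiting the observation that $j$ sends a urelement-name to its canonical representative $\{\langle\tau,1_\P\rangle\}$ and commutes with the set-building operation otherwise. The key structural fact driving everything is that for $\sigma\in M^\P_{\scaleto{\#}{4pt}}$, the interpretation $j(\sigma)_G$ (using the new Definition \ref{newmg}) coincides with $\sigma_G$ (using the old Definition \ref{oldmg}); indeed one should first prove, by induction on rank, the sharper statement
\begin{itemize}
    \item[] $(\ast)$\quad for every $\sigma\in M^\P_{\scaleto{\#}{4pt}}$, $\;j(\sigma)_G=\sigma_G$,
\end{itemize}
and then Lemma \ref{tilde1-1} is immediate, since $\sigma_G=\tau_G$ iff $j(\sigma)_G=\sigma_G=\tau_G=j(\tau)_G$. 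So in fact the real content is $(\ast)$, and the lemma as stated is a trivial corollary.

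\textbf{Proving $(\ast)$.} I would argue by induction on the rank of $\sigma$. If $\A(\sigma)$, then $\sigma_G=\sigma$ by the old definition, while $j(\sigma)=\{\langle\sigma,1_\P\rangle\}$, and since $\sigma$ is a urelement and $1_\P\in G$, the new interpretation gives $j(\sigma)_G=\sigma$ as well. If $\sigma$ is a set of pairs, then $j(\sigma)=\{\langle j(\rho),p\rangle:\langle\rho,p\rangle\in\sigma\}$ contains no pair of the form $\langle a,p\rangle$ with $a$ a urelement (every first coordinate $j(\rho)$ is a genuine $\P$-name, never a urelement), so the new interpretation falls into the second case and
\[
j(\sigma)_G=\{\,j(\rho)_G:\langle j(\rho),p\rangle\in j(\sigma)\text{ for some }p\in G\,\}=\{\,j(\rho)_G:\langle\rho,p\rangle\in\sigma\text{ for some }p\in G\,\}.
\]
By the induction hypothesis each $j(\rho)_G=\rho_G$, so this equals $\{\rho_G:\langle\rho,p\rangle\in\sigma,\ p\in G\}=\sigma_G$, as desired. (One should note that $j(\rho)\in M^\P$, so it is legitimately picked up by the new interpretation clause, which quantifies over $\dot y\in M^\P$.)

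\textbf{Where the subtlety lies.} The only place to be slightly careful is the case analysis in the new interpretation: one must confirm that when $\sigma$ is genuinely a urelement-name in the old sense, $j(\sigma)$ genuinely triggers the first clause of the new interpretation, and when $\sigma$ is a set-name, $j(\sigma)$ genuinely triggers the second clause — i.e. that $j$ cannot accidentally produce a name of "mixed" type. This is clear from the definition of $j$ (the two cases are mutually exclusive and $j$ respects them), but it is the one hinge on which the induction turns. Everything else is bookkeeping. I do not expect any genuine obstacle here; the lemma is essentially a compatibility check between the two machineries, and $(\ast)$ is the natural vehicle for it. If one prefers to prove the lemma without isolating $(\ast)$, the same induction works directly, handling the four combinations (both urelements, one urelement one set, both sets) of $\sigma_G,\tau_G$ — but routing through $(\ast)$ is cleaner and will also be reusable for the subsequent appendix lemmas relating $M[G]_{\scaleto{\#}{4pt}}$ and $M[G]$.
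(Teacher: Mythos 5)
Your proposal is correct, and your claim $(\ast)$ that $j(\sigma)_G=\sigma_G$ for every $\sigma\in M^\P_{\scaleto{\#}{4pt}}$ does hold: in the urelement case $j(\sigma)=\{\langle\sigma,1_\P\rangle\}$ is evaluated by the first clause of Definition \ref{newmg} to $\sigma$ itself, and in the set case the domain of $j(\sigma)$ contains no urelements (every first coordinate is $j(\rho)$, which is a set), so the second clause applies and the induction hypothesis — which covers urelement entries $\langle a,p\rangle\in\sigma$ as well, since $j(a)_G=a=a_G$ — gives exactly $\sigma_G$. This is, however, a genuinely different decomposition from the paper's. The paper's (omitted) argument is a direct two-directional induction on the biconditional: assuming $\sigma_G=\tau_G$ one matches each $j(\rho)_G\in j(\sigma)_G$ with some $j(\mu)_G\in j(\tau)_G$ via the induction hypothesis, and symmetrically for the converse; it never asserts that $j(\sigma)_G$ and $\sigma_G$ coincide, only that $j$ preserves and reflects equality of values, which is all that is needed to make the map $\sigma_G\mapsto j(\sigma)_G$ well defined and injective. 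Your route proves the stronger fact that this map is literally the inclusion $M[G]_{\scaleto{\#}{4pt}}\subseteq M[G]$, which trivializes the lemma and would also streamline the later appendix arguments (well-definedness and injectivity become immediate, and elementarity becomes a statement about an inclusion of transitive structures); the only point you must make explicit, and you do, is that $j$ never produces a name of mixed type, so the correct clause of the new valuation fires in each case.
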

\begin{proof}
Note that the $G$-valuation is defined differently for names in $M^\P_{\scaleto{\#}{4pt}}$ and $M^\P$, but this should not cause any confusion. The proof is by a straightforward induction on the rank of $\sigma$ and $\tau$, so we omit it.\end{proof}
\begin{comment}
We prove it by induction on the rank of $\sigma$ and $\tau$. The lemma holds easily when $\sigma$ and $\tau$ are urelements. Suppose that $\sigma$ and $\tau$ are sets. Then $j(\sigma)$ and $j(\tau)$ do not contain any urelements in their domains, so $j(\sigma)_{G}$ and $j(\tau)_{G}$ must be sets. Assume that $\sigma_G = \tau_G$. For any $j(\rho)_{G}  \in j(\sigma)_{G}$, $\rho_G \in \sigma_G$ so $\rho_G = \mu_G$ for some $\mu \in dom(\tau)$; by the induction hypothesis, $j(\rho)_{G} = j(\mu)_{G} \in j(\tau)_{G}$ so $j(\sigma)_{G} \subseteq j(\tau)_{G}$ and the same argument yields $j(\sigma)_{G} = j(\tau)_{G}$. 

Assume that $j(\sigma)_{G} = j(\tau)_{G}$. For any $\rho_G \in \sigma_G$, $j(\rho)_{G} \in j(\sigma)_{G}$ so $j(\rho)_{G} = j(\mu)_G$ for some $\mu \in dom(\tau)$; by the induction hypothesis, $\rho_G = \mu_G \in \tau_G$. So $\sigma_G \subseteq \tau_G$, and the same argument shows that $\tau_G = \sigma_G$.
\end{comment}

Therefore, the map $\sigma_G \mapsto j(\sigma)_G$ is a well-defined embedding from $M[G]_{\scaleto{\#}{4pt}}$ into $M[G]$. Next, we show that this embedding is elementary. The key observation here is that every name $\dotx$ in $M^\P$ has a \textit{set-counterpart}, $\dotx^{\textup{Set}}$, such that $\dotx_G = \dotx^{\textup{Set}}_G$ whenever $\dotx_G$ is a set.

\begin{definition}
For every $\dotx \in M^\P$, define by recursion
\begin{align*}
\xset  =& \{ \<\yset, s> \mid \doty \in M^\P \land \exists p\in \P (\<\doty, p> \in \dotx \land s \leq p) \land \forall r \in \P, a \in \A (\<a, r> \in \doty \to s \bot r) \}\\ 
      &\cup \{\<\check{a}, s> \mid \exists p, r \in \P, \doty \in M^\P, a \in \A (\<\doty, p> \in \dotx \land \<a, r> \in \doty \land s \leq p \land s \leq r)\}
\end{align*}
\end{definition}
\noindent The idea is that we forget about the urelements in the domain of $\dotx$ and then  for each $\P$-name $\doty$ in $dom(\dotx)$, we pair $\yset$ with some suitable conditions depending on whether $\doty$ is treated as a urelement or a set. 

\begin{lemma}\label{lemma:XSetinRan(j)}
For each $\dotx \in M^\P$, $\xset = j(\sigma)$ for some $\sigma \in M^\P_{\scaleto{\#}{4pt}}$.
\end{lemma}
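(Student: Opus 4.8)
The plan is to prove this by induction on the rank of $\dotx$. The recursion defining $\xset$ descends in rank: $\xset$ is assembled from the sets $\yset$ for $\P$-names $\doty \in dom(\dotx)$, together with the ground-model canonical names $\check{a}$ for urelements $a$. So it suffices to fix $\dotx \in M^\P$ and assume the claim holds for every $\P$-name of smaller rank. Implicit here is the routine verification that the defining recursion for the Set-counterpart operation is legitimate inside $M$, so that $\xset$ is a genuine set of $M$.

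The key observation I would make is that \emph{every object occurring as a first coordinate of a pair in $\xset$ lies in the range of $j$}. Inspecting the definition, such an object is either $\yset$ for some $\doty \in M^\P \cap dom(\dotx)$, or $\check{a}$ for some urelement $a$. In the first case the induction hypothesis yields a $\sigma_{\doty} \in M^\P_{\scaleto{\#}{4pt}}$ with $\yset = j(\sigma_{\doty})$, and since $j$ is injective $\sigma_{\doty} = j^{-1}(\yset)$. In the second case one simply unwinds the definitions: $\check{a} = \{\<a, 1_\P>\}$ is precisely $j(a)$, where the urelement $a$ is regarded as an element of $M^\P_{\scaleto{\#}{4pt}}$. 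Hence $j^{-1}$ is defined on every first coordinate appearing in $\xset$.

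With that in hand I would set, working in $M$,
\[
  \sigma \;=\; \{\, \<j^{-1}(z), s> : \<z, s> \in \xset \,\},
\]
which is a set by Replacement in $M$ applied to the definable map $\<z,s>\mapsto\<j^{-1}(z),s>$ on $\xset$. Every member of $\sigma$ is an ordered pair whose first coordinate lies in $M^\P_{\scaleto{\#}{4pt}}$ and whose second coordinate lies in $\P$, and $\sigma$ is a set of ordered pairs, hence not a urelement; so $\sigma$ is a $\P$-name$_{\scaleto{\#}{4pt}}$ of $M$, i.e.\ $\sigma \in M^\P_{\scaleto{\#}{4pt}}$. It then remains to compute
\[
  j(\sigma) \;=\; \{\, \<j(\rho), s> : \<\rho, s> \in \sigma \,\} \;=\; \{\, \<j(j^{-1}(z)), s> : \<z,s> \in \xset \,\} \;=\; \xset,
\]
using once more that $j$ is injective; this closes the induction, and incidentally re-derives $\xset \in M^\P$ since $j$ takes values in $M^\P$. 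I do not expect a genuine obstacle here: the only point needing a moment's attention is matching the two clauses of the definition of $\xset$ with the two possible shapes of a $j$-image---in particular the observation that $\check a = j(a)$---after which everything is bookkeeping.
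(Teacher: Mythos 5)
Your proof is correct and follows essentially the same route as the paper's: induction on the rank of $\dotx$, the observation that $\check{a} = j(a)$, and then defining $\sigma = \{\<j^{-1}(z), s> : \<z, s> \in \xset\}$ and verifying $j(\sigma) = \xset$ using injectivity of $j$. Your write-up is just a more detailed version of the paper's terse argument (including the routine checks that $\sigma \in M^\P_{\scaleto{\#}{4pt}}$ and that the recursion is legitimate in $M$), so there is nothing to add.
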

\begin{proof}
By an induction on the rank of $\dotx$. If $dom(\dotx) \subseteq \A$, then $\xset = j(\emptyset)$. Note that for every urelement $a$, $\check{a} = j(a)$. So suppose that the lemma holds for every $\doty \in M^\P \cap \dotx$. Define
$$\sigma = \{\<j^{-1}(\doty), s> \mid \<\doty, s> \in \xset\}.$$
Then $\xset = j(\sigma)$.\end{proof}

\begin{lemma}\label{lemma:SetCounterpart}
For every $\dotx \in M^\P$, $\xset_G \subseteq \dotx_G$ and $\dotx_G \subseteq \xset_G$.
\end{lemma}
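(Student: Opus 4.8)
The plan is to prove the two containments simultaneously by induction on the rank of $\dotx$, invoking the inductive hypothesis only for the $\P$-names $\doty\in dom(\dotx)$, which have strictly smaller rank. Two preliminary observations set the stage: by Lemma \ref{lemma:XSetinRan(j)} we have $\xset\in M^\P$ for every $\dotx\in M^\P$, so $\xset_G$ is meaningful; and since no urelement occurs in $dom(\xset)$ (its domain consists only of names of the form $\yset$ and $\check a$, which are sets), $\xset_G$ is always a set. The technical engine, which I would extract once and reuse, is this: if $s\in G$ is incompatible with every condition $r$ for which $\doty$ contains a pair $\<a,r>$ with $a$ a urelement, then $\doty_G$ cannot be a urelement, hence is a set; combining this with the inductive hypothesis ($\yset_G\subseteq\doty_G$ and $\doty_G\subseteq\yset_G$) and Extensionality yields $\yset_G=\doty_G$.

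First I would dispose of the case $\dotx_G\in\A$, say $\dotx_G=a$, so that $\<a,p>\in\dotx$ for some $p\in G$. Here I claim $\xset_G=\emptyset$: any pair entering $\xset$ carries a condition $s$ lying below some $p'$ with $\<\doty,p'>\in\dotx$ and $\doty$ a $\P$-name, and since $\doty\neq a$ the incompatibility clause of Definition \ref{newpnames} gives $p'\bot p$, so $s\notin G$. Thus $\xset_G=\emptyset\subseteq\dotx_G$, and, as $\dotx_G$ is a urelement and therefore has no members, $\dotx_G\subseteq\xset_G$ holds vacuously.

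The substantive case is $\dotx_G$ a set. For $\xset_G\subseteq\dotx_G$, an element of $\xset_G$ is $w_G$ for some $\<w,s>\in\xset$ with $s\in G$ and $w$ a $\P$-name: if $w=\yset$, the defining clause of $\xset$ supplies $p\in G$ with $\<\doty,p>\in\dotx$ and tells us $s$ is incompatible with all urelement-conditions of $\doty$, so the engine gives $\yset_G=\doty_G$, and $\doty_G\in\dotx_G$ since $\dotx_G$ is a set; if $w=\check a$, the defining clause supplies $p,r\in G$, $\<\doty,p>\in\dotx$, $\<a,r>\in\doty$, whence $\doty_G=a\in\dotx_G$ and $\check a_G=a$. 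For $\dotx_G\subseteq\xset_G$, an element of $\dotx_G$ is $\doty_G$ with $\<\doty,p>\in\dotx$, $\doty$ a $\P$-name, $p\in G$: if $\doty_G$ is a urelement $a$, pick $\<a,r>\in\doty$ with $r\in G$ and $s\in G$ below both $p$ and $r$, so that $\<\check a,s>\in\xset$ and $\check a_G=a=\doty_G$; if $\doty_G$ is a set, I would run a density argument below $p$ — the set of conditions that are either incompatible with every urelement-condition of $\doty$ or below one of them is dense below $p$, so $G$ meets it in some $s$, and $s$ cannot be of the second kind (that would force $\doty_G$ to be a urelement), hence is of the first kind, giving $\<\yset,s>\in\xset$ with $\yset_G=\doty_G$ by the engine. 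Once both containments hold for every $\dotx$, Extensionality delivers the intended consequence that $\xset_G=\dotx_G$ whenever $\dotx_G$ is a set.

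The main obstacle is precisely that last density argument together with the bookkeeping around when $\doty_G$ is forced to be a set: the subtlety is that $\doty$ may carry urelement-pairs all of whose conditions happen to lie outside $G$, so before the inductive hypothesis can be promoted from two containments to an equality via Extensionality one must actively select a witnessing condition in $G$ that excludes those pairs. Everything else is a routine unwinding of the definitions of $\xset$ and of the $G$-valuation in Definition \ref{newmg}, together with the incompatibility clause of Definition \ref{newpnames}.
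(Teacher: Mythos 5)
Your proof is correct and follows essentially the same route as the paper's: induction on the rank of $\dotx$, unwinding the two clauses of $\xset$ for one inclusion, and for the other the same density argument (your ``first kind/second kind'' split is exactly the paper's $D_1\cup D_2$) combined with the observation that a condition in $G$ incompatible with all urelement-pairs of $\doty$ forces $\doty_G$ to be a set, so the inductive containments upgrade to $\yset_G=\doty_G$. Your explicit treatment of the case where $\dotx_G$ is a urelement and your isolation of the ``engine'' are only cosmetic refinements of steps the paper leaves implicit.
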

\begin{proof}
Suppose that the lemma holds for every $\doty \in M^\P \cap \dotx$. 

We first show that $\xset_G \subseteq \dotx_G$. Let $z \in \xset_G$.

\textit{Case 1.} $z$ is a set. Then $z = \yset_G$, where $\<\yset, s> \in \xset$ with some $s \in G$. This means that $\<\doty, p> \in \dotx$ for some $p \geq s$, and so $\doty_G \in \dotx_G$. Moreover, if $\<a, r> \in \doty$ for any urelement $a$, then $s$ and $r$ are incomptabile, which means $\doty_G$ must be a set. By the induction hypothesis, we have $\yset_G$ and $\doty_G$ are co-extensional. Therefore, $z = \doty_G \in \dotx_G$.

\textit{Case 2.} $z = a$ for some urelement $a$. Note that $\yset_G$ is always a set for every $\doty$. So $\<\check{a}, s> \in \xset$ and $s \in G$. By the constructin of $\xset$, it follows that $a \in \dotx_G$. Therefore, $\xset_G \subseteq \dotx_G$.

Next, we show that  $\xset_G \subseteq \dotx_G$. Let $\doty_G \in \dotx_G$, where $\<\doty, p>$ for some $p \in G$.

\textit{Case 1.} $\doty_G = a$ for some urelement $a$. Then $\<a, r> \in \doty$ for some $r \in G$. So there is some $s \in G$ with $s \leq p, r$. Thus, $\<\check{a}, s> \in \xset$ and hence $a \in \xset_G$.

\textit{Case 2.} $\doty_G$ is a set. By the induction hypothesis, we have $\doty_G = \yset_G$. So it remains to show that $\yset_G \in \xset_G$. Define
\begin{align*}
D_1 =&\{s \in \P \mid \exists a \in \A, r \in \P (\<a, r> \in \doty \land s \leq r)\};\\ 
 D_2 =&\{s \mid \forall a\in \A, r \in P (\<a, r> \in \doty \to r \bot s)\}.
\end{align*}
$D_1 \cup D_2$ is dense below $p$: if $q \leq p$ and $q \notin D_2$, then $q$ is comptable with some $r$ such that $\<a, r> \in \doty$ and so there will be some $s \in D_1$ with $s \leq q$. Thus, $(D_1 \cup D_2)\cap G$ is nonempty. But $D_1 \cap G$ must be empty, otherwise $\doty_G$ will be a urelement, which contradicts our case assumption. It follows that there is some $s \in D_2 \cap G$ such that $s \leq p$, making $\<\yset, s> \in \xset$. So we have $\yset_G \in \xset_G$. Therefore,  $\dotx_G \subseteq \xset_G$. This completes the proof. \end{proof}

%M[G] = M[G] bar%
\begin{theorem}
$M[G] = M[G]_{\scaleto{\#}{4pt}}$.
\end{theorem}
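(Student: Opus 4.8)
The plan is to prove the two inclusions $M[G]_{\scaleto{\#}{4pt}} \subseteq M[G]$ and $M[G] \subseteq M[G]_{\scaleto{\#}{4pt}}$ separately, the first being almost immediate and the second carrying the real content. For the first, the key observation is that $j(\sigma)_G = \sigma_G$ for every $\sigma \in M^\P_{\scaleto{\#}{4pt}}$, which one proves by a quick induction on the rank of $\sigma$: if $\sigma$ is a urelement, then $j(\sigma) = \{\<\sigma, 1_\P>\}$ and both valuations return $\sigma$; if $\sigma$ is a set, then $dom(j(\sigma))$ contains no urelements, so $j(\sigma)_G = \{ j(\rho)_G : \exists p \in G\ \<\rho, p> \in \sigma \}$, which equals $\sigma_G$ by the induction hypothesis. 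Since $j$ maps $M^\P_{\scaleto{\#}{4pt}}$ into $M^\P$, this gives $\sigma_G = j(\sigma)_G \in M[G]$, hence $M[G]_{\scaleto{\#}{4pt}} \subseteq M[G]$; it also pins down the embedding $\sigma_G \mapsto j(\sigma)_G$ discussed after Lemma \ref{tilde1-1} as the identity inclusion.

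For the harder inclusion $M[G] \subseteq M[G]_{\scaleto{\#}{4pt}}$ I would fix $\dotx \in M^\P$ and show $\dotx_G \in M[G]_{\scaleto{\#}{4pt}}$, splitting into two cases according to whether $\dotx_G$ is a urelement or a set. If $\dotx_G$ is a urelement $a$, then $a$ occurs in $\dotx$, so $a \in M$ by transitivity, and since every urelement of $M$ lies in $M^\P_{\scaleto{\#}{4pt}}$ and is valuated to itself, $\dotx_G = a \in M[G]_{\scaleto{\#}{4pt}}$. If $\dotx_G$ is a set, I would pass to the set-counterpart $\xset$: Lemma \ref{lemma:SetCounterpart} gives $\xset_G \subseteq \dotx_G$ and $\dotx_G \subseteq \xset_G$, and since both are sets, $\xset_G = \dotx_G$; Lemma \ref{lemma:XSetinRan(j)} gives $\xset = j(\sigma)$ for some $\sigma \in M^\P_{\scaleto{\#}{4pt}}$; and the identity from the first paragraph gives $j(\sigma)_G = \sigma_G$. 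Concatenating, $\dotx_G = \xset_G = j(\sigma)_G = \sigma_G \in M[G]_{\scaleto{\#}{4pt}}$. Together with the urelement case, this yields $M[G] \subseteq M[G]_{\scaleto{\#}{4pt}}$ and hence $M[G] = M[G]_{\scaleto{\#}{4pt}}$.

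I do not anticipate a real obstacle: the genuine work --- attaching to each new name whose value is a set an old name with the same value --- is already carried out in Lemmas \ref{lemma:XSetinRan(j)} and \ref{lemma:SetCounterpart}, and the rest is assembly. The only point that needs a moment's attention is the built-in asymmetry that $\xset$ discards the urelements in $dom(\dotx)$, so $\xset_G$ is always a set; this is why one must dispose of the case $\dotx_G \in \A$ directly before invoking the set-counterpart, rather than trying to treat all of $M[G]$ uniformly.
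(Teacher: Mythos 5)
Your proof is correct, but it is organized differently from the paper's. The paper does not argue by double inclusion of the underlying classes: it shows, by induction on formulas, that $\sigma_G \mapsto j(\sigma)_G$ is an elementary embedding of $M[G]_{\scaleto{\#}{4pt}}$ into $M[G]$, deduces $M[G]_{\scaleto{\#}{4pt}} \models \ZFUR$, and then obtains the equality from minimality considerations (Theorem \ref{fundamentalthmofforcing} and Lemma \ref{McoversM[G]}). The real content is shared: your case analysis for $M[G] \subseteq M[G]_{\scaleto{\#}{4pt}}$ --- urelement values handled directly, set values via $\dotx_G = \xset_G$ and Lemmas \ref{lemma:XSetinRan(j)} and \ref{lemma:SetCounterpart} --- is precisely the paper's existential-quantifier step. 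Your scaffolding, however, is lighter: you never need the induction on formulas, the fact that either model satisfies $\ZFUR$, or the minimality of $M[G]$ and $M[G]_{\scaleto{\#}{4pt}}$. The key simplification is your observation that $j(\sigma)_G = \sigma_G$, i.e.\ that the embedding is literally the inclusion; the induction you sketch for this is sound (for a urelement $a$, $j(a)=\{\<a, 1_\P>\}$ valuates to $a$ by the first clause of Definition \ref{newmg}, while for a set name $\sigma$ no pair of $j(\sigma)$ has a urelement as first coordinate and each first coordinate lies in $M^\P$, so the second clause applies and the induction hypothesis finishes), and it strengthens and supersedes Lemma \ref{tilde1-1}. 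What the paper's route buys is an explicit elementarity statement that does not depend on noticing this identity; what your route buys is a shorter, purely combinatorial proof of the class equality, from which elementarity and $M[G]_{\scaleto{\#}{4pt}} \models \ZFUR$ then follow for free. Your closing remark is also on point: $\xset_G$ is always a set, so the case $\dotx_G \in \A$ must be disposed of before invoking the set-counterpart, exactly as you do.
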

\begin{proof}
If $ M[G]_{\scaleto{\#}{4pt}} \models \ZFUR$, then $ M[G]_{\scaleto{\#}{4pt}}$ will be the least transitive model of $\ZFUR$ which extends $M$ and contains $G$ so we will have $M[G]_{\scaleto{\#}{4pt}} = M[G]$ by Theorem \ref{fundamentalthmofforcing} and Lemma \ref{McoversM[G]}. Thus, it suffices to show that the map $\sigma_G \mapsto j(\sigma)_G$ is an elementary embedding from $M[G]_{\scaleto{\#}{4pt}}$ to $M[G]$, and we prove this by induction on formulas. Atomic and Boolean cases are immediate, so it remains to show that for every $\sigma_1, ..., \sigma_n \in M^\P_{\scaleto{\#}{4pt}}$, 
$$M[G]_{\scaleto{\#}{4pt}} \models  \exists x \varphi (x, \sigma_{1_G}, ..., \sigma_{n_G}) \Leftrightarrow M[G] \models  \exists x \varphi (x, j(\sigma_1)_G, ..., j(\sigma_n)_G).$$ 
$\Rightarrow$ holds by the induction hypothesis. Suppose that $M[G] \models \varphi(\dotx_G, j(\sigma_1)_G, ..., j(\sigma_n)_G)$ for some $\dotx \in M^\P$. If $\dotx_G$ is some urelement $a$, then $\dotx_G = j(a)_G$; otherwise, $\dotx_G = \xset_G$ by Lemma \ref{lemma:SetCounterpart} and by Lemma \ref{lemma:XSetinRan(j)} $\xset = j(\sigma)$ for some $\sigma \in M^\P_{\scaleto{\#}{4pt}}$. Therefore, $\dotx_G = j(\sigma)_G$ for some $\sigma \in M^\P_{\scaleto{\#}{4pt}}$ and so $M[G]_{\scaleto{\#}{4pt}} \models  \exists x \varphi (x, \sigma_{1_G}, ..., \sigma_{n_G})$ by the induction hypothesis.\end{proof}

\printbibliography

\end{document}